\newcommand{\Ressort}[4][]{
\node [minimum size=#2,#1] (ressort) at (#4) {};
\pgfmathparse{#2/#3}\let\pas\pgfmathresult
\draw [decorate,decoration={zigzag,segment length=\pas,amplitude=0.1cm}]
(ressort.east) -- (ressort.west);
}
\newcommand\ds{\displaystyle}
\newcommand\tn{\textnormal}
\newcommand{\norm}[1]{\left\Vert#1\right\Vert}
\newcommand\fcar{{\mathbf 1}}
\newcommand\N{\mathbb{N}}
\newcommand\R{\mathbb{R}}
\newtheorem{theorem}{Theorem}[section]
\newtheorem{corollary}[theorem]{Corollary}
\newtheorem{proposition}[theorem]{Proposition}
\newtheorem{remark}[theorem]{Remark}
\theoremstyle{definition}
\newtheorem{algorithm}{Algorithm}
\numberwithin{equation}{section}
 \title{Convergent algorithm based on Carleman estimates \\
 for the recovery of a potential in the wave equation.
 \footnote{The authors wish to thank Institut Henri Poincaré (Paris, France) and the Research In Paris funding. 
 Partially supported by the Agence Nationale de la Recherche (ANR, France),  Project MEDIMAX number ANR-13-MONU-0012. }}
\author{
Lucie Baudouin$^{1,}$\footnote{e-mail: {\tt baudouin@laas.fr}} \and Maya de Buhan$^{2,}$\footnote{e-mail: {\tt maya.de-buhan@parisdescartes.fr}} \and Sylvain Ervedoza$^{3,}$\footnote{e-mail: {\tt ervedoza@math.univ-toulouse.fr}}
\medskip\\
{\it\footnotesize $^{1}$ LAAS-CNRS, Université de Toulouse, CNRS, Toulouse, France}
\\
{\it\footnotesize $^{2}$ CNRS, UMR 8145, MAP5, Université Paris Descartes, Sorbonne Paris Cité, France
}
\\
{\it\footnotesize $^{3}$ Institut de Math\'ematiques de Toulouse ; UMR 5219 ;  Universit\'e de Toulouse ; CNRS ;}\\
{\it\footnotesize UPS IMT F-31062 Toulouse Cedex 9, France}
}
\date{\today}
\begin{document}
\maketitle
\begin{abstract}
	This article develops the numerical and theoretical study of a reconstruction algorithm of a potential in a wave equation from boundary measurements, using a cost functional built on weighted energy terms coming from a Carleman estimate. More precisely, this inverse problem for the wave equation consists in the determination of an unknown time-independent potential from a single measurement of the Neumann derivative of the solution on a part of the boundary. While its uniqueness and stability properties are already well known and studied, a constructive and globally convergent algorithm based on Carleman estimates for the wave operator was recently proposed in \cite{BaudouinDeBuhanErvedoza}. However, the numerical implementation of this strategy still presents several challenges, that we propose to address here. 
\bigskip
\end{abstract}
\noindent{\bf Keywords:}  wave equation, inverse problem, reconstruction, Carleman estimates.\bigskip\\
\noindent{\bf AMS subject classifications:} 93B07, 93C20, 35R30.
 \newpage
%
%

\section{Introduction and algorithms}
%
\subsection{Setting and previous results}
Let $\Omega$ be a smooth bounded domain of $\mathbb{R}^d$, $d\geq 1$ and $T>0$.
This article focuses on the reconstruction of the potential in a wave equation according to the following inverse problem:
\begin{quote}
	Given the source terms $f$ and $f_{\partial}$ and the initial data $(w_0, w_1)$, considering the solution of
	\begin{equation}\label{EqW}
	\left\{ \begin{array}{ll}
 		\partial_t^2 W-\Delta W+Q W=f,   
			&\tn{in } (0,T) \times \Omega,\\
		W =f_{\partial},  
			&\tn{on } (0,T) \times \partial \Omega,\\
		W(0)= w_0, \quad \partial_t W(0)= w_1,  
			&\tn{in } \Omega,
	\end{array}\right.
	\end{equation}
	can we determine the unknown potential $Q = Q(x)$, assumed to depend only on $x\in \Omega$, from the additional knowledge of the flux of the solution through a part  $\Gamma_0$ of the boundary $\partial \Omega$, namely
	\begin{equation}
		\label{Flux}
		\mathscr{M} = \partial_{n} W,  \quad \hbox{ on } (0,T) \times \Gamma_0~?
	\end{equation}
\end{quote}
Beyond the preliminary questions about the uniqueness and stability of this inverse problem, already very well documented as we will detail below, we are interested in the actual reconstruction of the potential $Q$ from the extra information given by the measurement of the flux $\mathscr{M}$ of the solution on a part of the boundary. 
This issue was already addressed theoretically in our previous work \cite{BaudouinDeBuhanErvedoza} based on Carleman estimates. 
However, the algorithm proposed in \cite{BaudouinDeBuhanErvedoza}, proved to be convergent, cannot be implemented in practice as it involves minimization processes of functionals containing too large exponential terms. Therefore, our goal is to address here the numerical challenges induced by that approach. \\
Before going further, let us recall that if $Q\in L^{\infty}(\Omega)$, $f \in L^1(0,T;L^2(\Omega))$, 
$f_{\partial} \in H^1((0,T)\times \partial\Omega)$, $w_0\in H^1(\Omega)$ and $w_1\in L^2(\Omega)$, and assuming the compatibility condition $f_{\partial}(0,x) = w_0(x)$ for all $x\in\partial\Omega$, the Cauchy problem \eqref{EqW} is well-posed in $C^0([0,T]; H^1(\Omega)) \cap C^1([0,T]; L^2(\Omega))$, and the normal derivative $\partial_n W$ is well-defined as an element of $L^2( (0,T) \times \partial \Omega)$, see e.g. \cite{Lions,LasieckaLionsTriggiani}.\\
Our results will require the following geometric conditions (sometimes called ``multiplier condition'' or ``$\Gamma$-condition''):
\definecolor{ffqqqq}{rgb}{1,0,0}
\definecolor{qqqqff}{rgb}{0,0,1}
\begin{eqnarray}
	&&\exists \, x_0 \not \in \overline{\Omega} , \tn{ such that } 
	\nonumber \\
	&&\quad \Gamma_0 \supset \{x \in \partial \Omega, \ (x-x_0) \cdot \vec{n}(x) \geq 0 \}, 	
	\label{GCC-multiplier}\\
	&&\quad T > \sup_{x \in \Omega} | x - x_0|.
	\label{GCC-Time}
\end{eqnarray}

Space and time conditions \eqref{GCC-multiplier}--\eqref{GCC-Time} are natural from the observability point of view, and appear naturally in the context of the multiplier techniques developed in \cite{Ho,Lions}. They are more restrictive than the well-known observability results \cite{Bardos} by Bardos Lebeau Rauch based on the behavior of the rays of geometric optics, but the geometric conditions \eqref{GCC-multiplier}--\eqref{GCC-Time} yield much more robust results, and this will be of primary importance in our approach.
\\

In fact, under the regularity assumption 
\begin{equation}
	\label{RegAssumptions}
	 W \in H^1(0,T; L^\infty(\Omega)),
\end{equation} 
the positivity condition
\begin{equation}
	\label{Positivity}
	\exists \alpha >0 \tn{ such that }|w_0| \geq \alpha \tn{ in } \Omega, 
\end{equation}
the knowledge of an \textit{a priori}  bound $m >0$ such that
\begin{equation}
	\label{A-priori-bound}
	\norm{Q}_{L^\infty(\Omega)} \leq m, \quad i.e. \quad Q \in L^\infty_{\leq m} (\Omega) = \{q \in L^\infty(\Omega), \|q\|_{L^\infty(\Omega)} \leq m \},
\end{equation}
and the multiplier conditions \eqref{GCC-multiplier}--\eqref{GCC-Time}, the results in \cite{Baudouin01} (and in \cite{Yam99} under more regularity hypothesis) state the Lipschitz stability of the inverse problem consisting in the determination of the potential $Q $ in \eqref{EqW} from the measurement of the flux $\mathscr{M}$ in \eqref{Flux}. 
\\

We will introduce our work by describing what was done in our former article \cite{BaudouinDeBuhanErvedoza}, in order to highlight stage by stage the main challenges when performing numerical implementations. 

In \cite{BaudouinDeBuhanErvedoza}, we proposed a prospective algorithm to recover the potential $Q$ from the measurement $\mathscr{M}$ on $(0,T) \times \Gamma_0 $, that we briefly recall below. We assume that conditions \eqref{GCC-multiplier}--\eqref{GCC-Time} are satisfied for some $x_0 \notin \overline\Omega$, and we set $\beta \in (0,1)$ such that
\begin{equation}
	\label{Time-Condition}
	\beta T > \sup_{x \in \Omega} |x - x_0|.
\end{equation}
We then define, for $(t,x) \in (-T,T) \times \Omega $, the Carleman weight functions
\begin{equation}
	\label{poids-double}
		\varphi(t,x) = |x-x_0|^2-\beta t^2, \quad \text{ and  for $\lambda>0$,} \quad
		\psi(t,x) = e^{\lambda (\varphi(t,x)+C_0)},
\end{equation}
where $C_0>0$ is chosen such that $\varphi + C_0 \geq 1$ in $(-T,T) \times \Omega$ and $\lambda>0$ is large enough.
The chore of the algorithm in \cite{BaudouinDeBuhanErvedoza} is the minimization of a functional $K_{s,q}[\mu]$ given for $s >0$, $q \in L^\infty_{\leq m}(\Omega)$ and $\mu \in L^2((0,T) \times \Gamma_0)$ by 
\begin{equation}
	\label{FunctionalK-k}
	K_{s,q}[\mu](z ) 
	= 
	\frac{1}{2} \int_0^T \int_{\Omega} e^{2s \psi} |\partial_{t}^2 z - \Delta z + q z |^2\,dxdt
	+ 
	\frac{s}{2} \int_0^T \int_{\Gamma_0} e^{2s \psi} | \partial_n z - \mu |^2\, d\sigma dt,
\end{equation}	
set on the trajectories $z \in L^2(0,T; H^\fcar_0(\Omega))$ such that  $\partial_{t}^2 z - \Delta z + q z \in L^2((0,T) \times \Omega)$, $\partial_n z \in L^2((0,T) \times \Gamma_0 )$ and $z(0,\cdot) = 0$ in $\Omega$. Note in particular that \cite{BaudouinDeBuhanErvedoza} shows that there exists a unique minimizer of the above functional under the aforementioned assumptions.
The algorithm then reads as follows:
\begin{algorithm}\label{Algo-old}(see \cite{BaudouinDeBuhanErvedoza})\\
	 {\bf \textit{Initialization:}} $q^0 = 0$ (or any guess in $L^\infty_{\leq m} (\Omega)$).
	\\
	 {\bf \textit{Iteration: From $k$ to $k+1$}}
		\\
		$\bullet $ {\it  Step 1 -} Given $q^k$, we set $\mu^k = \partial_t \left(\partial_n w[q^k] - \partial_n W[Q]\right)$ on $(0,T) \times \Gamma_0 $, where $w[q^k]$ denotes the solution of  \eqref{EqW}  with the potential $q^k$ and $\partial_nW[Q]$ is the measurement given in \eqref{Flux}.
		\\	
		$\bullet$ {\it  Step 2 -} Minimize $K_{s, q^k}[\mu^k]$ (defined in \eqref{FunctionalK-k}) on the trajectories $z \in L^2(0,T; H^1_0(\Omega))$ such that  $\partial_{t}^2 z - \Delta z + q^k z \in L^2((0,T) \times \Omega)$, $\partial_n z \in L^2((0,T) \times \Gamma_0 )$ and $z(0,\cdot) = 0$ in~$\Omega$. Let $Z^k$ be the unique minimizer of the functional $K_{s,q^k}[\mu^k]$.
		\\
		$\bullet$ {\it  Step 3 -} Set
		$$
			\tilde q^{k+1} = q^k + \frac{\partial_t Z^k(0)}{w_0},  \quad \tn{ in } \Omega, 
		$$
		where $w_0$ is the initial condition in \eqref{EqW} (recall assumption \eqref{Positivity}).
		\\
		$\bullet$ {\it Step 4 -} Finally, set
		$$
			q^{k+1} = T_m (\tilde q^{k+1}), \quad \textit{ with }~ 
			T_m(q)= \left\{ \begin{array}{ll} q, &\textit{ if } |q| \leq m, \\ \textit{sign}(q) m, &\textit{ if } |q| > m, \end{array}\right. 
		$$
		where $m$ is the a priori bound  in \eqref{A-priori-bound}.
\end{algorithm}
Algorithm \ref{Algo-old} comes along with the following convergence result:
\begin{theorem}[{\cite[Theorem~1.5]{BaudouinDeBuhanErvedoza}}]
	\label{Thm-Old-BdBE}
	Under assumptions \eqref{GCC-multiplier}-\eqref{GCC-Time}-\eqref{RegAssumptions}-\eqref{Positivity}-\eqref{A-priori-bound}-\eqref{Time-Condition}, there exist constants $C>0$, $s_0 >0$ and $\lambda > 0$ such that for all $s \geq s_0$,  Algorithm \ref{Algo-old} is well-defined and the iterates $q^k$ constructed by Algorithm~\ref{Algo-old} satisfy, for all $k \in \N$,
	\begin{equation}
		\label{ConvergenceBdBE13}
			\int_\Omega |q^{k+1} - Q|^2 e^{2 s \psi(0)} \, dx 
			\leq 
			\frac{C \norm{W[Q]}_{H^1(0,T;L^\infty(\Omega))}^2}{s^{1/2}\alpha^2} \int_\Omega |q^k - Q|^2 e^{2 s \psi(0)} \, dx.
	\end{equation}
	In particular, for $s$ large enough, the sequence $q^k$ strongly converges towards $Q$ as $k \to \infty$ in $L^2(\Omega)$.
\end{theorem}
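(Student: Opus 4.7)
The plan is to express the iteration error $q^{k+1}-Q$ in terms of the Cauchy data at $t=0$ of an auxiliary function to which a Carleman estimate can be applied, then to exploit the fact that $Z^k$ is the minimizer of $K_{s,q^k}[\mu^k]$ to bound that quantity.

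First, I would introduce $y^k = w[q^k] - W[Q]$, which solves the wave equation with potential $q^k$, zero initial data, homogeneous Dirichlet boundary condition and source $(Q - q^k) W[Q]$. Differentiating in time, $z^k := \partial_t y^k$ satisfies $\partial_t^2 z^k - \Delta z^k + q^k z^k = (Q-q^k) \partial_t W[Q]$ in $(0,T) \times \Omega$ with homogeneous Dirichlet boundary condition, with $z^k(0,\cdot) = 0$ and $\partial_t z^k(0,\cdot) = (Q-q^k) w_0$, and with $\partial_n z^k = \mu^k$ on $(0,T)\times\Gamma_0$ by the very definition of $\mu^k$ in Step 1 of the algorithm. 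In particular, $z^k$ is an admissible trajectory for the minimization of $K_{s,q^k}[\mu^k]$, so
\begin{equation*}
K_{s,q^k}[\mu^k](Z^k) \leq K_{s,q^k}[\mu^k](z^k) = \frac{1}{2}\int_0^T\!\!\int_\Omega e^{2s\psi}|Q-q^k|^2|\partial_t W[Q]|^2\, dx dt.
\end{equation*}

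Next, I would set $V^k = Z^k - z^k$. Then $V^k(0,\cdot)=0$ in $\Omega$, and $V^k$ vanishes on $(0,T)\times \partial \Omega$, so a Carleman estimate for the wave operator $\partial_t^2 - \Delta + q^k$ with weight $e^{s\psi}$, involving a pointwise-in-time trace at $t=0$ on the left and a boundary observation on $\Gamma_0$ on the right (this is where assumptions \eqref{GCC-multiplier}--\eqref{GCC-Time}--\eqref{Time-Condition} are used), yields a control of $\int_\Omega |\partial_t V^k(0)|^2 e^{2s\psi(0)}\,dx$ by $\int e^{2s\psi}|(\partial_t^2-\Delta+q^k)V^k|^2$ plus $s\int_{\Gamma_0} e^{2s\psi}|\partial_n V^k|^2$. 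Both of these terms split according to the definition of $V^k$ into (i) the quantities appearing in $K_{s,q^k}[\mu^k](Z^k)$, hence controlled by the previous step, and (ii) the source $(Q-q^k)\partial_t W[Q]$, which is again the right-hand side of the above inequality. Combining these gives
\begin{equation*}
\int_\Omega |\partial_t V^k(0)|^2 e^{2s\psi(0)}\, dx \leq \frac{C}{s^{1/2}}\int_0^T\!\!\int_\Omega e^{2s\psi}|Q-q^k|^2|\partial_t W[Q]|^2\, dx dt.
\end{equation*}

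Finally, since $\partial_t z^k(0) = (Q-q^k) w_0$, one has $\partial_t Z^k(0) = \partial_t V^k(0) + (Q-q^k)w_0$, so the definition of $\tilde q^{k+1}$ in Step 3 gives $\tilde q^{k+1} - Q = \partial_t V^k(0)/w_0$. The truncation step does not increase the distance to $Q$ (since $|Q|\leq m$), so $|q^{k+1}-Q| \leq |\partial_t V^k(0)|/\alpha$ pointwise. Multiplying by $e^{2s\psi(0)}$, integrating, using the previous inequality and the trivial bound $\psi(t,x) \leq \psi(0,x)$ (from $\varphi(t,x) = |x-x_0|^2 - \beta t^2 \leq \varphi(0,x)$) to pull $e^{2s\psi(0,x)}$ out of the time integral and to factor out $\|W[Q]\|_{H^1(0,T;L^\infty(\Omega))}^2$ from the bound on $|\partial_t W[Q]|^2$, yields \eqref{ConvergenceBdBE13}. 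Taking $s$ large enough makes the contraction factor strictly less than $1$ and gives the announced strong $L^2$ convergence.

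The main obstacle is the Carleman estimate itself in Step two: one needs a form of the estimate that simultaneously produces the pointwise trace $\int|\partial_t V^k(0)|^2 e^{2s\psi(0)}$ on the left with the correct power of $s$ (so that the final contraction constant is of order $s^{-1/2}$), and controls the right-hand side only by the quantities already weighted exactly as in the functional $K_{s,q^k}[\mu^k]$, so that the minimality bound from Step one can be plugged in without loss; that is, the Carleman estimate has to be matched by design to the cost functional used in the algorithm.
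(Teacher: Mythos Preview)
Your argument is correct, and it captures the essential mechanism: $z^k$ is an admissible competitor with exact boundary flux, the minimality of $Z^k$ bounds $K_{s,q^k}[\mu^k](Z^k)$ by the source term alone, and the Carleman estimate applied to $V^k=Z^k-z^k$ transfers this into a bound on $\partial_t V^k(0)$ with the correct $s^{-1/2}$ gain.

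Note, however, that this theorem is not proved in the present paper: it is quoted from \cite{BaudouinDeBuhanErvedoza}. What the paper does prove is the analogous Theorem~\ref{Thm-Algo2Converges} for Algorithm~\ref{Algo} and weight $\varphi$, and there the comparison step is organized differently from yours. Instead of invoking the crude minimality inequality $K(Z^k)\le K(z^k)$ and then splitting $(\partial_t^2-\Delta+q^k)V^k$ and $\partial_n V^k$ by the triangle inequality, the paper introduces a one-parameter family of functionals $J_{s,q}[\tilde\mu,g]$ carrying an explicit source term $g$, observes that $\tilde z^k$ is the \emph{exact} minimizer of $J_{s,q^k}[\tilde\mu^k,\tilde g^k]$ while $\widetilde Z^k$ minimizes $J_{s,q^k}[\tilde\mu^k,0]$, and then compares the two minimizers by subtracting their Euler--Lagrange equations (Proposition~\ref{PropDependenceG}). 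Testing the difference against $Z=Z^a-Z^b$ produces directly
\[
\tfrac12\!\int e^{2s\varphi}|LZ|^2 + s\!\int_{\Gamma_0} e^{2s\varphi}|\partial_n Z|^2 + s^3\!\iint_{\mathcal O} e^{2s\varphi}|Z|^2 \le \tfrac12\!\int e^{2s\varphi}|g^a-g^b|^2,
\]
so the right-hand side of the Carleman estimate is bounded by the source discrepancy with no extraneous factor of $2$. Your route is shorter and avoids introducing the auxiliary family of functionals; the paper's route is a bit cleaner quantitatively and, more importantly, generalizes immediately to the setting of Algorithm~\ref{Algo}, where $\tilde z^k=\eta(\varphi)z^k$ is \emph{not} an admissible competitor with vanishing cost in all three terms of $J_{s,q^k}[\tilde\mu^k]$ (it is only the minimizer of the shifted functional), so your direct minimality argument would need modification there.
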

This algorithm presents the advantage of being convergent for any initial guess $q^0 \in L^\infty_{\leq m}(\Omega)$ without any a priori guess except for the knowledge of $m$. This is why we call this algorithm {\it globally convergent}. However, while this algorithm is theoretically satisfactory as at each iteration, it simply consists in the minimization of the strictly convex and coercive quadratic functional $K_{s, q}$, it nevertheless contains several flaws and drawbacks in its numerical implementation. In particular, we underline that the functional $K_{s, q}$ involves two exponentials, namely
$$
	\exp(s \psi) = \exp( s \exp(\lambda (\varphi + C_0))),
$$
with a choice of parameters $s$ and $\lambda$ large enough and whose sizes are difficult to estimate. In particular, for $s = \lambda = 3$ - which are not so large of course - $\Omega = (0,1)$, $x_0 \simeq 0^-$, $T \simeq 1^+$ and $\beta \simeq 1^-$, the ratio
$$
	\frac{\ds \max_{(0,T)\times \Omega} \{\exp(2 s \psi) \}}{\ds \min_{(0,T)\times \Omega} \{\exp(2 s \psi) \}} 
$$
is of the order of $ 10^{340}$ ! The numerical implementation of Algorithm \ref{Algo-old} therefore seems doomed. 
\medskip
\\
The goal of this article is to improve the above algorithm so that it can fruitfully be implemented. This will be achieved following several stages: working on the construction of the cost functional (specifically on the Carleman weight function),  considering the preconditioning of the cost functional, and  adapting the new cost functional  to the discrete setting used for the numerics.
\medskip
\\
Before going further, let us mention that the inverse problem under consideration has been well-studied in the literature, starting with the uniqueness result in the celebrated article \cite{BuKli81}, see also \cite{Klibanov92}, which introduced the use of Carleman estimates  for these studies. Later on, stability issues were obtained for the wave equation, first based on the so-called observability properties of the wave equation \cite{PuelYam96,PuelYam97} and then refined with the use of Carleman estimates, among which \cite{ImYamIP01,ImYamCom01, ImYamIP03,KlibanovYamamoto06}. In fact, a great part of the literature in this area, concerning uniqueness, stability and reconstruction of coefficient inverse problems for evolution partial differential equations can be found in the survey article \cite{KliSurvey-JIIP13} and we refer the interested reader to it. A slightly different approach can also be found in the recent article \cite{SU-TAMS13} based on more geometric insights. 
\\
Let us also emphasize that we are interested in the case in which one performs only one measurement. The question of determining coefficients from the Dirichlet to Neumann map is different and we refer for instance to the boundary control method proposed in \cite{Belishev97} or to methods based on the complex geometric optics, see \cite{Isakov91}.
\\
Here, as we said, we will focus on the reconstruction of the potential in the wave equation \eqref{EqW} from the flux $\mathscr{M}$ in \eqref{Flux}. This question has been studied only recently, though the first investigation \cite{KliIous-SIMA95} appears in 1995, and we shall in particular point out the most recent works of Beilina and Klibanov \cite{BeiKli12}, \cite{BeiKli-NARWA15}, who study the reconstruction of a coefficient in a hyperbolic equation from the use of a Carleman weight function for the design of the cost functional. However, these techniques differ from ours as they work on the functions obtained after a Laplace transform of the equation. 
\\

In what follows, we propose to develop a numerical algorithm in the spirit of the one in \cite{BaudouinDeBuhanErvedoza}, study its convergence and his implementation. Before going further, let us also mention the fact that one can find in \cite{Cindea-FernandezCara-Munch} some numerical experiments based on the minimization of a quadratic functional similar to the one in \eqref{FunctionalK-k}, but with $s$ and $\lambda$ rather small, namely $s = 1$ and $\lambda = 0.1$, see \cite[Section 4]{Cindea-FernandezCara-Munch}. Our goal is to overcome this restriction on the size of the Carleman parameters, as we request them to be large for the convergence of the algorithm. 

\subsection{New weight functions, new cost functionals, and a new algorithm}

In a first stage, we aim at removing one exponential from the cost functional $K_{s,q}$ in \eqref{FunctionalK-k}. 
Similarly to \cite{BaudouinDeBuhanErvedoza}, looking again for a cost functional based on a Carleman estimate for the wave equation, we will work with the Carleman weight function $\exp( s \varphi)$ instead of $\exp (s \exp (\lambda(\varphi + C_0)))$. 
This requires an adaptation of the proof of \cite{BaudouinDeBuhanErvedoza} with such a weight function and the use of the Carleman estimates developed in \cite{LRS} (see also \cite{ImYamCom01}), that we will briefly recall in Section~\ref{SecCV}. 

In particular, instead of minimizing $K_{s, q}[\mu]$ introduced in \eqref{FunctionalK-k} as in Step 2 of Algorithm~\ref{Algo-old}, 
we will perform a minimization process on a new functional $J_{s,q}[\tilde \mu]$, to be defined later in \eqref{FunctionalJ}, 
based on the simplified weight function $\exp( s \varphi)$. Before introducing that functional, we shall define  the following restricted set $\mathcal{O}$:
\begin{multline}
	\label{mathcal-O}
	\mathcal{O} = \{(t,x )\in (0,T) \times \Omega,\, \beta t > |x-x_0|\} \\
	= \{ (t,x ) \in (0,T) \times \Omega,\,  |\partial_t\varphi(t,x)| \geq |\nabla \varphi(t,x)| \}, 
\end{multline}
which is depicted in Figure~\ref{FigDefQ}. 

\begin{figure}[h]
\begin{center}
\begin{tikzpicture}[line cap=round,line join=round,x=4cm,y=3cm]
\draw[->,color=black] (-0.3,0) -- (1.2,0);
\draw[->,color=black] (0,0) -- (0,1.5);
\fill[color=red,pattern=north west lines] (0.,1.32) -- (0.,0.5) -- plot [domain=0:1](\x,{1.1*(\x+0.2)}) -- (0,1.32) -- (1,1.32);
\draw[color=black] (0,0) node[below] {$0$};
\draw[color=black] (1.,0) node[below] {$1$};
\draw[color=red, smooth,samples=100,domain=-0.2:1.0] plot(\x,{1.1*(\x+0.2)});
\draw[color=blue, smooth,samples=100,domain=-0.2:1.0] plot(\x,{0.8*(\x+0.2)});
\draw [dash pattern=on 5pt off 5pt] (1,1.32)-- (0,1.32);
\draw [dash pattern=on 5pt off 5pt] (1,1.3)-- (1,0);
\draw [color=blue](0.1,0.29) node[rotate=30,anchor=north west] {$\varphi = 0$, slope $\frac{1}{\sqrt{\beta}}$};
\draw [color=red](0.25,0.54) node[rotate=39,anchor=north west] {$|x-x_0| = \beta t$, slope $\frac{1}{\beta}$};
\draw[shift={(-0.2,0)},color=black] (0pt,2pt) -- (0pt,-2pt) ;
\draw (0.25,1) node {$\mathcal O$};
\draw (-0.2,0) node[below] {$x_0$};
\draw (1.2,0) node[below] {$x$};
\draw (0,1.32) node[left] {$T$};
\draw (0,1.5) node[left] {$t$};
\draw (0.5,-0.05) node[below] {$\Omega$};
\end{tikzpicture}
		\caption{Illustration of domain $\mathcal O$ in the case $\Omega = (0,1)$.}\label{FigDefQ} 
	\end{center}
\end{figure}
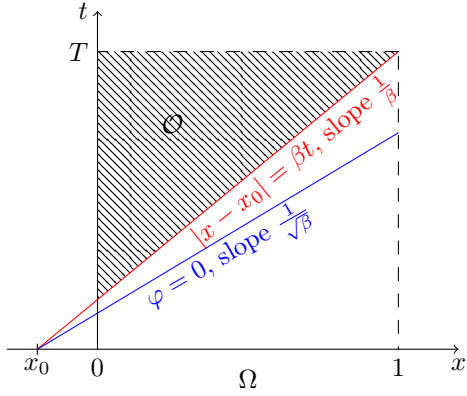 

For $s > 0$, $q \in L^\infty(\Omega)$ and $\tilde \mu \in L^2((0,T)\times \Gamma_0)$, we then introduce the functional $J_{s,q}[\tilde \mu]$ defined by
\begin{multline}
	\label{FunctionalJ}
	J_{s,q}[ \tilde \mu](z ) 
	=  
	\frac{1}{2} \int_0^T \int_{\Omega} e^{2s \varphi} |\partial_{t}^2 z - \Delta z + q z |^2\,dxdt 
	\\
	+ 
	\frac{s}{2} \int_0^T \int_{\Gamma_0} e^{2s \varphi} | \partial_n z -  \tilde \mu |^2 \, d\sigma dt
	+ 
	\frac{s^3}{2} \iint_{\mathcal O} e^{2s\varphi}  |z|^2 \,dxdt , 
\end{multline}
to be compared with $K_{s,q}[\mu]$ in \eqref{FunctionalK-k}, on the trajectories $z\in C^0([0,T];H_0^1(\Omega)) \cap C^1([0, T]; L^2(\Omega))$ 
such that  $\partial_{t}^2 z - \Delta z + q z \in L^2((0,T) \times \Omega )$ and $z(0,\cdot) = 0$ in $\Omega$. 
\\
This functional $J_{s,q}[\tilde \mu]$ is quadratic, and as we will show later in Section \ref{Subsec-ProofConvThm}, under conditions \eqref{GCC-multiplier}--\eqref{GCC-Time}--\eqref{Time-Condition}, it is strictly convex and coercive, therefore enjoying similar properties as the functional $K_{s,q}[\mu]$. Nevertheless, let us once more emphasize that the functional $J_{s,q}[\tilde \mu]$ is less stiff than the functional $K_{s,q}[\mu]$ as now the weights are of the form $\exp(2 s \varphi)$ instead of $\exp(2 s \psi) = \exp(2 s \exp(\lambda (\varphi+C_0)))$ in \eqref{FunctionalK-k}. This already indicates the possible gain we could have by working with the functional $J_{s,q}[\tilde \mu]$ in \eqref{FunctionalJ} instead of $K_{s,q}[\mu]$ in \eqref{FunctionalK-k}.
\\
It may appear surprising to note $\tilde \mu$ instead of $\mu$. These slightly different notations come from the fact that the functional $K_{s,q}[\mu]$ tries to find an optimal solution $Z$ of 
$$
	\partial_{t}^2 Z - \Delta Z + q Z \simeq 0 \hbox{ in } (0,T) \times \Omega, \quad \hbox{ and } \quad \partial_n Z \simeq \mu \hbox{ in } (0,T) \times \Gamma_0,
$$
while the functional $J_{s,q}[\tilde \mu]$ tries to find an optimal solution $\tilde Z$ of 
$$
	\partial_{t}^2 \tilde Z - \Delta \tilde Z + q \tilde Z \simeq 0 \hbox{ in } (0,T) \times \Omega,
	 ~  \quad 
	 \partial_n \tilde Z \simeq \tilde \mu \hbox{ in } (0,T) \times \Gamma_0, 
	~ \hbox{ and } \quad 
	\tilde Z \simeq 0 \hbox{ in } \mathcal{O}.
$$
Therefore, as $\tilde Z$ is sought after  such that it is small in $\mathcal{O}$, it is natural to introduce a smooth cut-off function $\eta \in C^2 (\mathbb{R})$ such that $0 \leq \eta \leq 1$ and 
\begin{equation}
	\label{eta}
	\eta(\tau)  = 0, \text{ if } \tau \leq 0, \quad \hbox{ and } \quad \eta(\tau)  = 1,  \text{ if } \tau \geq d_0^2 := d(x_0, \Omega)^2, 
\end{equation}	
(recall that $d_0^2 >0$ according to Assumption \ref{GCC-multiplier}) see Figure~\ref{isoval}.  Next, the idea is that if 
$$
	\tilde \mu = \eta(\varphi) \mu, \hbox{ in } (0,T) \times \Gamma_0.
$$
and if $Z$ denotes the minimizer of the functional $K_{s,q}[\mu]$ in \eqref{FunctionalK-k}, then the minimizer $\tilde Z$ of $J_{s,q}[\tilde \mu]$ in \eqref{FunctionalJ} should be close to $\eta(\varphi) Z$ in $(0,T) \times \Omega$ and in particular at $t = 0$ this should yield, due to the choice of $\eta$ in \eqref{eta}, $\partial_t \tilde Z(0) \simeq \partial_t Z(0)$ in $\Omega$.
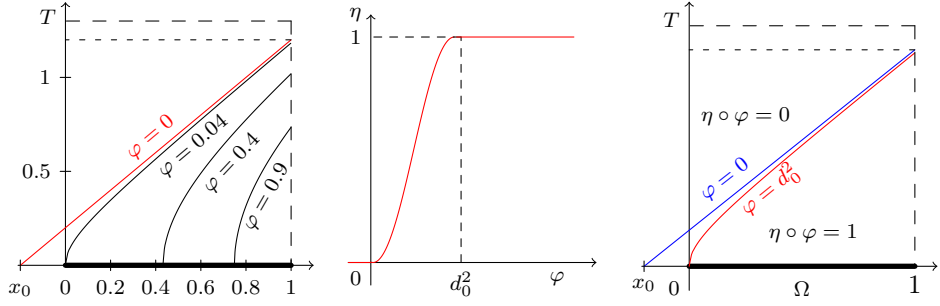
\begin{figure}[ht]
\begin{center}
\begin{tikzpicture}[line cap=round,line join=round,x=3cm,y=2.5cm]
\draw[->,color=black] (-0.21,0) -- (1.1,0);
\foreach \x in {0.2,0.4,0.6,0.8,1}
\draw[shift={(\x,0)},color=black] (0pt,2pt) -- (0pt,-2pt) node[below] {\footnotesize $\x$};
\draw[->,color=black] (0,0) -- (0,1.4);
\foreach \y in {0.5,1}
\draw[shift={(0,\y)},color=black] (2pt,0pt) -- (-2pt,0pt) node[left] {\footnotesize $\y$};
\draw[color=black] (0pt,-2pt) node[below] {\footnotesize $0$};
\draw[-, line width=2pt,color=black] (0,0) -- (1,0);
\draw[color=red, smooth,samples=100,domain=-0.2:1.0] plot(\x,{sqrt(((\x)+0.2)^2)});
\draw[smooth,samples=100,domain=0:1.0] plot(\x,{sqrt((((\x)+0.2)^2-0.04))});
\draw[smooth,samples=100,domain=0.748684000000003:1.0] plot(\x,{sqrt((((\x)+0.2)^2-0.9))});
\draw[smooth,samples=100,domain=0.4325:1.0] plot(\x,{sqrt(((\x)^2+0.4*(\x)-0.36))});
\draw [dash pattern=on 5pt off 5pt] (1,1.3)-- (0,1.3);
\draw [dash pattern=on 2pt off 4pt] (1,1.2)-- (0,1.2);
\draw [dash pattern=on 5pt off 5pt] (1,1.3)-- (1,0);
\draw [color=red](0.2,0.63) node[rotate=40,anchor=north west] {\footnotesize $\varphi = 0$};
\draw (0.34,0.53) node[rotate=40,anchor=north west] {\footnotesize $\varphi = 0.04$};
\draw (0.54,0.43) node[rotate=47,anchor=north west] {\footnotesize $\varphi = 0.4$};
\draw (0.72,0.2) node[rotate=60,anchor=north west] {\footnotesize $\varphi = 0.9$};
\draw[shift={(-0.2,0)},color=black] (0pt,2pt) -- (0pt,-2pt) ;
\draw (-0.2,-0.05) node[below] {\footnotesize $x_0$};
\draw (0,1.32) node[left] {\footnotesize $T$};
\end{tikzpicture}\quad 
\begin{tikzpicture}[line cap=round,line join=round,x=3cm,y=3cm]
\draw[->,color=black] (-0.1,0) -- (1,0);
\draw[->,color=black] (0,-0.1) -- (0,1.1);
\draw[color=black] (0,0) node[below left] {\footnotesize $0$};
\draw[color=black] (0.9,0.) node[below left] {\footnotesize $\varphi$};
\draw[color=black] (0.4,0.) node[below] {\footnotesize $d_0^2$};
\draw[color=black] (0,1) node[left] {\footnotesize $1$};
\draw[-,color=red] (-0.1,0) -- (0,0);
\draw[color=red,samples=1000,domain=0.:0.37] plot(\x,{(sin(500*(\x-1.63))+1)/2});
\draw[dash pattern=on 3pt off 3pt] (0.4,1)-- (0,1);
\draw[dash pattern=on 3pt off 3pt] (0.4,0)-- (0.4,1);
\draw[-,color=red] (0.38,1)-- (0.9,1);
\draw (0,1.1) node[left] {\footnotesize $\eta$};
\end{tikzpicture}\quad
\begin{tikzpicture}[line cap=round,line join=round,x=3cm,y=0.8cm]
\draw[->,color=black] (-0.21,0) -- (1.1,0);
\draw[->,color=black] (0,-0.4) -- (0,4.4);
\draw[color=black] (0pt,-2pt) node[below left] {\footnotesize $0$};
\draw[-, line width=2pt,color=black] (0.,0) -- (1,0);
\draw[color=blue, smooth,samples=100,domain=-0.2:1.0] plot(\x,{sqrt(9*((\x)+0.2)^2)});
\draw[color=red, smooth,samples=100,domain=0.:1.0] plot(\x,{sqrt(9*(((\x)+0.2)^2-0.04))});
\draw [dash pattern=on 5pt off 5pt] (1,4)-- (0,4);
\draw [dash pattern=on 2pt off 4pt] (1,3.6)-- (0,3.6);
\draw [dash pattern=on 5pt off 5pt] (1,4)-- (1,0);
\draw [color=blue](-0.02,1.3) node[rotate=38,anchor=north west] {\footnotesize $\varphi = 0$};
\draw [color=red](0.15,1.1) node[rotate=39,anchor=north west] {\footnotesize $\varphi = d_0^2$};
\draw[shift={(-0.2,0)},color=black] (0pt,2pt) -- (0pt,-2pt) ;
\draw (-0.2,-0.05) node[below] {\footnotesize $x_0$};
\draw (0,4.1) node[left] {\footnotesize $T$};
\draw (0.25,2.5) node {\footnotesize $\eta \circ \varphi =0$};
\draw (0.55,0.55) node {\footnotesize $\eta \circ \varphi =1$};
\draw (1.,0.) node[below] {1};
\draw (0.5,-0.1) node[below] {\footnotesize $\Omega$};
\end{tikzpicture}
\caption{Isovalues of the function $\varphi$ ($x_0=-0.2$, $\beta=1$). Definition and application of the cut-off function $\eta$.}\label{isoval}
\end{center}
\end{figure}
\\

We are then led to propose a revised version of our reconstruction algorithm, detailed in Algorithm \ref{Algo} given below.

\begin{algorithm}
\label{Algo}~~\\
	{\bf \textit{Initialization:}} $q^0 = 0$ (or any guess $q^0 \in L^\infty_{\leq m} (\Omega)$).
	\\
	{\bf \textit{Iteration: From $k$ to $k+1$}}
	\\
	$\bullet $ {\it  Step 1 -}
	Given $q^k$, we set $\tilde \mu^k = \eta(\varphi) \partial_t \left(\partial_n w[q^k] - \partial_n W[Q]\right)$ on $(0,T) \times \Gamma_0 $, where $w[q^k]$ denotes the solution of  
	\begin{equation}\label{Eqwk}
			\left\{ \begin{array}{ll}
 				\partial_t^2 w-\Delta w+q^k w=f,\qquad   & \tn{in }(0,T) \times \Omega,\\
				w =f_{\partial},  & \tn{on } (0, T) \times \partial \Omega,\\
				w(0)= w_0, \quad \partial_t w(0)= w_1,\qquad  &\tn{in } \Omega,
			\end{array}\right.
	\end{equation}
	corresponding to \eqref{EqW}  with the potential $q^k$ and $\partial_nW[Q]$ is the measurement in \eqref{Flux}.
	\\
	$\bullet $ {\it Step 2 -}
	We minimize the functional $J_{s,q^k}[\tilde \mu^k]$ defined in \eqref{FunctionalJ}, for some $s>0$ that will be chosen independently of $k$, 	
	on the trajectories $z\in C^0([-T,T];H_0^1(\Omega)) \cap C^1([-T, T]; L^2(\Omega))$ such that  $\partial_{t}^2 z - \Delta z + q^k z \in L^2((0,T) \times \Omega)$, $\partial_n z \in L^2((0,T) \times \Gamma_0)$ and $z(0, \cdot) = 0$ in $\Omega$. 
	%
	%
	Let $\widetilde Z^k$ be the unique minimizer of the functional $J_{s,q^k}[\tilde \mu^k]$.
	\\
	$\bullet $ {\it  Step 3 -}
	Set
	\begin{equation}
		\label{Def-q-k+1}
		\tilde q^{k+1}  =  q^k + \dfrac{\partial_t \widetilde Z^k(0)}{w_0}, \quad \tn{ in } \Omega,
	\end{equation}
	where $w_0$ is the initial condition in \eqref{Eqwk} (or \eqref{EqW}). 
	\\
	$\bullet $ {\it  Step 4 -}		
	Finally, set
	$$
		q^{k+1} = T_m (\tilde q^{k+1}), \quad \textit{ with }~ 
		T_m(q)= \left\{ \begin{array}{ll} q, &\textit{ if } |q| \leq m, \\ \textit{sign}(q) m, &\textit{ if } |q| \geq m, \end{array}\right. 
	$$
	where $m$ is the a priori bound  in \eqref{A-priori-bound}.
\end{algorithm}

Of course, if one compares Algorithm \ref{Algo} with Algorithm \ref{Algo-old}, the major difference is in Step 2  in which one minimizes the functional $J_{s,q^k}[\tilde\mu]$ in \eqref{FunctionalJ} instead of the functional $K_{s,q^k}[\mu]$ in \eqref{FunctionalK-k}. And as we have said above, the two functionals should have minimizers that are close at $t = 0$.  In fact, similarly as Theorem~\ref{Thm-Old-BdBE}, we will obtain the following result:
\begin{theorem}
	\label{Thm-Algo2Converges}
	Under assumptions \eqref{GCC-multiplier}-\eqref{GCC-Time}-\eqref{RegAssumptions}-\eqref{Positivity}-\eqref{A-priori-bound}-\eqref{Time-Condition}, there exist positive constants   $C$ and $s_0 $ such that for all $s \geq s_0$, Algorithm~$\ref{Algo}$ is well-defined and the iterates $q^k$ constructed by Algorithm~$\ref{Algo}$ satisfy, for all $k \in \N$,
	\begin{equation}
		\label{ConvergenceAlgo2}
			\int_\Omega |q^{k+1} - Q|^2 e^{2 s \varphi(0)} \, dx 
			\leq 
			\frac{C \norm{W[Q]}_{H^1(0,T;L^\infty(\Omega))}^2}{s^{1/2}\alpha^2} \int_\Omega |q^k - Q|^2 e^{2 s \varphi(0)} \, dx.
	\end{equation}
	In particular, for $s$ large enough, the sequence $q^k$ strongly converges towards $Q$ as $k \to \infty$ in $L^2(\Omega)$.
\end{theorem}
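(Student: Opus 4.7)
The plan is to follow the same blueprint as the proof of Theorem~\ref{Thm-Old-BdBE} in \cite{BaudouinDeBuhanErvedoza}: the only structural change is that the doubly-exponential weight $e^{s\psi}$ is replaced by $e^{s\varphi}$, and the $\mathcal{O}$-localized term in $J_{s,q}$ compensates for the loss of pseudoconvexity of $\varphi$ for the wave operator inside $\mathcal{O}$. The first step will be to establish a Carleman estimate of Lavrent'ev--Romanov--Shishatskii type (see \cite{LRS,ImYamCom01}) for the wave operator with weight $e^{s\varphi}$: for $y$ smooth on $(-T,T)\times\Omega$ vanishing on $(-T,T)\times\partial\Omega$ and with adequate decay at $t=\pm T$, one expects, for $s\geq s_0$ and some $C>0$,
\begin{equation*}
s\iint e^{2s\varphi}\bigl(|\partial_t y|^2+|\nabla y|^2\bigr)+s^3\iint e^{2s\varphi}|y|^2 \leq C\iint e^{2s\varphi}|\partial_t^2 y-\Delta y|^2 + Cs\iint_{\Gamma_0} e^{2s\varphi}|\partial_n y|^2 + Cs^3\iint_{\mathcal{O}} e^{2s\varphi}|y|^2.
\end{equation*}
Applied to test functions vanishing at $t=0$ and odd-extended to $(-T,T)$, this inequality immediately yields strict convexity and coercivity of $J_{s,q^k}[\tilde\mu^k]$, hence existence and uniqueness of its minimizer $\widetilde Z^k$.

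\textbf{Competitor construction.} I would then set $Y^k:=\partial_t(w[q^k]-W[Q])$; differentiating \eqref{EqW} and \eqref{Eqwk} in time and subtracting gives
\begin{equation*}
\partial_t^2 Y^k-\Delta Y^k+q^k Y^k=(Q-q^k)\,\partial_t W[Q],\quad Y^k(0,\cdot)=0,\quad \partial_t Y^k(0,\cdot)=(Q-q^k)\,w_0,
\end{equation*}
with $Y^k=0$ on the lateral boundary and $\tilde\mu^k=\eta(\varphi)\partial_n Y^k$ on $(0,T)\times\Gamma_0$. The natural competitor is $z^*:=\eta(\varphi)Y^k$: using $\eta(\varphi(0,\cdot))=1$ (since $\varphi(0,\cdot)=|\cdot-x_0|^2\geq d_0^2$ by \eqref{eta}) and $\partial_t\varphi(0,\cdot)=0$, one checks $z^*(0,\cdot)=0$, $\partial_t z^*(0,\cdot)=(Q-q^k)\,w_0$, $\partial_n z^*|_{\Gamma_0}=\tilde\mu^k$, and $z^*\equiv 0$ on $\mathcal{O}$. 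Its wave-operator source is $\eta(\varphi)(Q-q^k)\partial_t W[Q]$ plus a commutator $[\eta(\varphi),\partial_t^2-\Delta]\,Y^k$ supported in the strip $\{0\leq\varphi\leq d_0^2\}$, where $e^{2s\varphi}\leq e^{2sd_0^2}$ stays bounded and can be controlled via the regularity hypothesis \eqref{RegAssumptions}.

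\textbf{Closing the loop.} The minimality $J_{s,q^k}[\tilde\mu^k](\widetilde Z^k)\leq J_{s,q^k}[\tilde\mu^k](z^*)$ feeds the Carleman estimate applied to $\widetilde Z^k-z^*$, together with a weighted energy identity at $t=0$ that trades an interior weighted norm for a weighted trace on $\{t=0\}$, to produce
\begin{equation*}
\int_\Omega e^{2s\varphi(0,\cdot)}\bigl|\partial_t(\widetilde Z^k-z^*)(0,\cdot)\bigr|^2 dx \leq \frac{C\,\|W[Q]\|_{H^1(0,T;L^\infty(\Omega))}^2}{s^{1/2}}\int_\Omega e^{2s\varphi(0,\cdot)}|Q-q^k|^2 dx.
\end{equation*}
The key Gaussian decay $\int_0^T e^{-2s\beta t^2}\,dt \leq Cs^{-1/2}$ is what produces the factor $s^{-1/2}$ when converting space-time integrals with weight $e^{2s\varphi}$ into spatial ones with weight $e^{2s\varphi(0,\cdot)}$. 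Since $\partial_t(\widetilde Z^k-z^*)(0,\cdot)=(\tilde q^{k+1}-Q)\,w_0$ and $w_0\geq\alpha$ by \eqref{Positivity}, the factor $\alpha^{-2}$ emerges; and as $T_m$ is the $L^\infty$-projection onto the convex set $L^\infty_{\leq m}\ni Q$, we have $|Q-q^{k+1}|\leq|Q-\tilde q^{k+1}|$ pointwise, yielding \eqref{ConvergenceAlgo2}. Choosing $s$ large then makes the prefactor strictly less than one, giving geometric convergence in the weighted $L^2$-norm, hence in $L^2(\Omega)$.

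\textbf{Main obstacle.} The main technical hurdle will be the Carleman estimate with the $\mathcal{O}$-localized observation term and the extraction of a sharp trace bound at $t=0$: it requires combining the interior multiplier identity with a careful weighted energy identity on the initial slice, and absorbing the commutator generated by $\eta(\varphi)$ uniformly in $s$ through the regularity of $W[Q]$.
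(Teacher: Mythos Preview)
Your proof plan is correct and follows the same blueprint as the paper. The paper's argument differs only in presentation: rather than exploit the minimality inequality $J_{s,q^k}[\tilde\mu^k](\widetilde Z^k)\leq J_{s,q^k}[\tilde\mu^k](z^*)$ directly, the paper introduces an auxiliary functional $J_{s,q}[\tilde\mu,g]$ carrying a source term, observes that your competitor $z^*=\eta(\varphi)Y^k$ is the \emph{minimizer} of $J_{s,q^k}[\tilde\mu^k,\tilde g^k]$, and then compares the two minimizers through their Euler--Lagrange equations (Proposition~\ref{PropDependenceG}). Since the functional is quadratic, this is equivalent to your minimality route combined with the identity $J(z^*)-J(\widetilde Z^k)=\tfrac12\, a(z^*-\widetilde Z^k,z^*-\widetilde Z^k)$; both paths deliver
\[
\|z^*-\widetilde Z^k\|^2_{\textnormal{obs},s,q^k}\;\leq\; \int_0^T\!\!\int_\Omega e^{2s\varphi}\,|\tilde g^k|^2\,dx\,dt,
\]
and the remainder (estimation of $\tilde g^k$, commutator supported in $\{0\leq\varphi\leq d_0^2\}$, use of \eqref{Positivity} and of $|T_m(\tilde q^{k+1})-Q|\leq|\tilde q^{k+1}-Q|$) is identical.

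One correction: the factor $s^{-1/2}$ does \emph{not} come from the Gaussian decay $\int_0^T e^{-2s\beta t^2}\,dt\lesssim s^{-1/2}$. Under the sole assumption \eqref{RegAssumptions}, $\partial_t W[Q]$ lies only in $L^2(0,T;L^\infty(\Omega))$, so it cannot be pulled out in $L^\infty_t$ to isolate $\int_0^T e^{-2s\beta t^2}\,dt$. The paper uses merely $e^{2s\varphi(t,x)}\leq e^{2s\varphi(0,x)}$ in the source estimate, and the entire $s^{-1/2}$ gain comes from the Carleman trace bound
\[
s^{1/2}\int_\Omega e^{2s\varphi(0)}\,|\partial_t z(0)|^2\,dx \;\leq\; M\,\|z\|^2_{\textnormal{obs},s,q},
\]
obtained by multiplying $P_1 y$ against $\rho(t)\,\partial_t y$ and using Cauchy--Schwarz---precisely your ``weighted energy identity at $t=0$''. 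That is the mechanism you should invoke.
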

The proof of Theorem~\ref{Thm-Algo2Converges} is given in Section~\ref{SecCV} and closely follows the one of Theorem~\ref{Thm-Old-BdBE} in \cite{BaudouinDeBuhanErvedoza}. The main difference is that the starting point of our analysis, instead of being the Carleman estimate in \cite{ImanuvHyp}, is the Carleman estimate in \cite{LRS}.
\\
The main improvement with respect to Algorithm \ref{Algo-old} is the fact that the functional $J_{s,q}[\tilde \mu]$ in \eqref{FunctionalJ} contains weight functions with only one exponential, making the problem less difficult to implement. However, it is still numerically challenging to use such functionals, especially as the convergence of Algorithm \ref{Algo} gets better for large parameter $s$. We propose below two ideas to make it numerically tractable.

\subsection{Preconditioning, processing and discretizing the cost functional}

When considering the functional $J_{s,q}[\tilde \mu]$ in \eqref{FunctionalJ}, one easily sees that exponentials factors can be removed if considering the unknown $z e^{s \varphi}$ instead of $z$. Such transformation corresponds to a preconditioning of the functional $J_{s,q}[\tilde \mu]$. Indeed, that way, exponential factors do not appear anymore when computing the gradient of the cost functional $J_{s,q}[\tilde \mu]$. Nevertheless, there are still exponentials factors appearing in the measurements.
We therefore also develop a progressive algorithm in the resolution of the minimization process. The idea is to consider intervals in which the weight function $\varphi$ does not significantly change, allowing to preserve numerical accuracy despite the possible large values of $s$. Details will be  given in Section~\ref{SecminJ}.
\\

When implementing the above strategy numerically, one has to discretize the wave equation under consideration, and to adapt the functional $J_{s,q}[\tilde \mu]$ to the discrete setting. As it is well-known \cite{Tref,Zua05Survey}, most of the numerical schemes exhibit some pathologies at high-frequency, namely discrete rays propagating at velocity~$0$ or blow up of observability estimates. Therefore, we need to take some care to adapt the functional $J_{s,q}[\tilde \mu]$ to the discrete setting. In particular, following ideas well-developed in the context of the observability of discrete waves (see \cite{Zua05Survey}), we will introduce a naive discrete version of $J_{s,q}[\tilde \mu]$ and penalize the high-frequencies. 
\\
To simplify the presentation of these penalized frequency functionals, we will introduce it in full details on a space semi-discrete and time continuous 1d wave equations, where the space semi-discretization is done using the finite-difference method on a uniform mesh. In this case, our approach, even at the discrete level, can be made completely rigorous by adapting the arguments in the continuous setting and the discrete Carleman estimates obtained in \cite{BaudouinErvedoza11} (recently extended to a multi-dimensional setting in \cite{Baudouin-Ervedoza-Osses}). We refer to Section~\ref{SecImplAlg} for extensive details. 
\\

Section~\ref{SecNum} then presents numerical results illustrating our method on several examples. In particular, we will illustrate the good convergence of the algorithm when the parameter $s$ is large. We shall also discuss the cases in which the measurement is blurred by some noise and the case in which the initial datum $w_0$ is not positive everywhere.\\

\paragraph{Outline} Section \ref{SecCV} is devoted to the proof of the convergence of Algorithm \ref{Algo}. In Section \ref{SecminJ} we explain how the minimization process of the functional $J_{s,q}$ in \eqref{FunctionalJ} can be strongly simplified. Section \ref{SecImplAlg} then makes precise the new difficulties arising when discretizing the functional $J_{s,q}$, and Section \ref{SecNum} presents several numerical experiments.
%
%
%
\section{Study of Algorithm \ref{Algo}}\label{SecCV}
%
%
\subsection{Main ingredients}
The goal of this section is to prove Theorem~\ref{Thm-Algo2Converges}. As mentioned in the introduction, the proof will closely follows the one of Theorem~\ref{Thm-Old-BdBE}  in \cite{BaudouinDeBuhanErvedoza}. The main difference is that, instead of using the Carleman estimate developed in \cite{ImanuvHyp,Baudouin01}, we will base our proof on the following one:
\begin{theorem}\label{Carlemangral}
	Assume the multiplier conditions \eqref{GCC-multiplier}-\eqref{GCC-Time} and $\beta \in (0,1)$ as in \eqref{Time-Condition}. Define the weight function $\varphi$ as in \eqref{poids-double}. Then there exist  $s_{0}>0$ and a positive constant $M $ such that for all $ s \geq s_{0}$:
	\begin{multline}
		\label{Carlemgral}
		s \int_{-T}^{T} \int_{\Omega} e^{2s\varphi} \left(|\partial_t z|^2+|\nabla z|^2 + s^2 |z|^2 \right)\,dxdt
		\leq M\int_{-T}^{T} \int_{\Omega} e^{2s\varphi} |\partial_t^2 z - \Delta z|^2\,dxdt 
		\\
		+ Ms  \int_{-T}^{T} \int_{\Gamma_{0}} e^{2s\varphi} \left|\partial_n z\right|^2\,d\sigma dt 
		+M s^3 \iint_{(|t|, x) \in \mathcal O} e^{2s\varphi}  |z|^2 \,dxdt ,
	\end{multline}
	for all $z\in C^0([-T,T];H_0^1(\Omega)) \cap C^1([-T, T]; L^2(\Omega))$ with $\partial_t^2 z - \Delta z \in L^2((-T,T)\times \Omega)$, where the set $\mathcal{O}$ satisfies \eqref{mathcal-O}.
	\\
	Furthermore, if $z(0,\cdot) = 0$ in $\Omega$, one can add to the left hand-side of \eqref{Carlemgral}, the following term:
	\begin{equation}
		\label{dt-v-0}
		s^{1/2} \int_{\Omega} e^{2s\varphi(0)}|\partial_t z(0)|^2 \,dx.
	\end{equation}
\end{theorem}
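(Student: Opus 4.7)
The plan is to recognize \eqref{Carlemgral} as essentially the Carleman estimate of Lavrent'ev--Romanov--Shishatskii type (cf. \cite{LRS,ImYamCom01}) for the d'Alembertian with the quadratic weight $\varphi(t,x) = |x-x_0|^2 - \beta t^2$, and to prove it via the standard conjugation-multiplier machinery. I would set $w = e^{s\varphi} z$ and compute the conjugated operator $P_\varphi w = e^{s\varphi}(\partial_t^2 - \Delta)(e^{-s\varphi} w)$. Splitting $P_\varphi = P_1 + P_2$ into its skew-symmetric part (containing the first-order terms coming from $2s\nabla\varphi\cdot\nabla - 2s\partial_t\varphi\,\partial_t$) and its symmetric part (containing $\partial_t^2 - \Delta$ together with the zero-order contribution $s^2(|\nabla\varphi|^2 - |\partial_t\varphi|^2) - s(\Delta\varphi - \partial_t^2\varphi)$), the identity $\|P_1 w\|^2 + \|P_2 w\|^2 + 2\langle P_1w, P_2 w\rangle = \|e^{s\varphi}(\partial_t^2 z - \Delta z)\|^2$ reduces everything to a careful integration by parts of the cross term.

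The central computation is that for $\varphi(t,x) = |x-x_0|^2 - \beta t^2$ one has
\[
|\nabla\varphi|^2 - |\partial_t\varphi|^2 = 4\bigl(|x-x_0|^2 - \beta^2 t^2\bigr),
\]
which is strictly positive precisely on the complement of $\mathcal{O}$. Thus the cross term $2\langle P_1 w, P_2 w\rangle$ yields, after integration by parts, a bulk contribution bounding $s^3 \int e^{2s\varphi}|z|^2 + s\int e^{2s\varphi}(|\partial_t z|^2 + |\nabla z|^2)$ on $(-T,T)\times\Omega \setminus \mathcal O$; the missing piece in $\mathcal{O}$ is exactly the price one pays by adding the local term $Ms^3 \iint_{\mathcal{O}} e^{2s\varphi}|z|^2$ on the right. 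The boundary contribution on $(-T,T)\times\partial\Omega$ is handled by the multiplier condition \eqref{GCC-multiplier}: since $z=0$ on $\partial\Omega$ forces $\nabla z = (\partial_n z)\vec n$, the boundary integrand has a sign governed by $(x-x_0)\cdot\vec n$, so the part of $\partial\Omega$ lying outside $\Gamma_0$ produces a term with the correct sign that can be dropped, leaving only the tangential-to-$\Gamma_0$ contribution $Ms\int e^{2s\varphi}|\partial_n z|^2$. The time-boundary terms at $t=\pm T$ vanish once one uses the strict inequality \eqref{Time-Condition}: it forces $\varphi(\pm T,x) \leq \sup_\Omega|x-x_0|^2 - \beta^2 T^2 < 0$, so they decay exponentially and can be absorbed after a standard cut-off in time (or by extending $z$ by $0$ outside a slightly smaller time interval).

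For the additional term \eqref{dt-v-0} under $z(0,\cdot) = 0$, I would run a separate energy-type argument: multiplying $\partial_t^2 z - \Delta z$ by $2\partial_t z \, e^{2s\varphi}$ and integrating over $(0,t)\times\Omega$ gives, thanks to $z(0)=\partial_t\varphi(0)=0$, an identity of the form
\[
\int_\Omega e^{2s\varphi(0)}|\partial_t z(0)|^2 \,dx \leq \int_\Omega e^{2s\varphi(t)}\bigl(|\partial_t z|^2 + |\nabla z|^2\bigr)dx + C\iint e^{2s\varphi}\bigl(|\partial_t^2 z -\Delta z|^2 + s^2(|\partial_t z|^2+|\nabla z|^2)\bigr)dxdt,
\]
where the $s^2$ factor comes from $|\partial_t\varphi|\lesssim s$-type bounds. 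Integrating in $t$ and multiplying by $s^{1/2}$ lets one absorb the right-hand side using the already-proved bulk estimate \eqref{Carlemgral}, whose left-hand side controls $s\int e^{2s\varphi}(|\partial_t z|^2+|\nabla z|^2)$; the exponent $1/2$ is exactly what balances $s^{1/2}\cdot s^2 \leq s$ after Young's inequality. The main technical obstacle throughout is the careful bookkeeping of all boundary contributions (in space and at $t=\pm T$) and of the lower-order remainders arising from $\Delta\varphi, \partial_t^2\varphi$, making sure they can be absorbed for $s\geq s_0$ large enough without touching the $\mathcal O$-penalty on the right.
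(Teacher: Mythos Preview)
Your outline for the bulk Carleman estimate \eqref{Carlemgral} is essentially the same conjugation--multiplier scheme the paper uses, with one difference worth flagging: you dispose of the time-boundary terms at $t=\pm T$ by appealing to $\varphi(\pm T,\cdot)<0$ and a cut-off in time, whereas the paper never cuts off. Instead, it makes the specific choice $\alpha=2d-2$ in the splitting $P_1+P_2+R$ and then shows by a direct Cauchy--Schwarz computation that the collected boundary expression $I_T$ at $t=T$ (and symmetrically at $t=-T$) is \emph{nonnegative} under \eqref{Time-Condition}. Your cut-off route introduces commutator terms supported where $T-\delta\leq |t|\leq T$, and while the $|z|^2$ part of these lies in $\mathcal O$ and is harmless, the $|\partial_t z|^2$ part is not obviously absorbed by the right-hand side of \eqref{Carlemgral}; making that work requires an additional ingredient (nested cut-offs or an a~priori energy bound) that you have not supplied. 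The paper's sign argument sidesteps this entirely.

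There is a genuine gap in your derivation of the extra term \eqref{dt-v-0}. Multiplying $\partial_t^2 z-\Delta z$ by $2e^{2s\varphi}\partial_t z$ forces you to differentiate the weight, and this produces terms of size $Cs\iint e^{2s\varphi}(|\partial_t z|^2+|\nabla z|^2)$ (one factor of $s$, not $s^2$: $|\partial_t\varphi|$ is bounded, not $O(s)$). After multiplying by $s^{1/2}$ you are left with $s^{3/2}\iint e^{2s\varphi}(|\partial_t z|^2+|\nabla z|^2)$, which the Carleman left-hand side---only $s\iint e^{2s\varphi}(\cdots)$---cannot absorb for large $s$; your balance ``$s^{1/2}\cdot s^2\leq s$'' is also arithmetically off. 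The paper avoids this by working in the conjugated variable $y=e^{s\varphi}z$ and multiplying \emph{$P_1 y$} (not $\mathscr{L}_s y$, and not $\Box z$) by $\rho(t)\partial_t y$ with a cut-off $\rho$ vanishing near $t=-T$. Because $P_1$ contains no first-order transport term $s\nabla\varphi\cdot\nabla$ or $s\partial_t\varphi\,\partial_t$, no stray factor of $s$ appears; one gets $\frac12\int_\Omega|\partial_t y(0)|^2\leq \int|P_1 y|\,|\partial_t y|+C\int(|\partial_t y|^2+|\nabla y|^2+s^2|y|^2)$, and then Cauchy--Schwarz with parameter $s^{1/2}$ together with the control of $\int|P_1 y|^2$ already present on the left of \eqref{CarlemW} closes the estimate. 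The point you are missing is that the intermediate bound on $\|P_1 y\|_{L^2}^2$ (not merely on $\|\mathscr{L}_s y\|_{L^2}^2$) is essential here.
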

The Carleman estimate of Theorem~\ref{Carlemangral} is quite classical and can be found in the literature in several places, among which \cite{LRS,Isakov,ZhangWave,FuYongZhang,Bel-}. For the convenience of the reader, we briefly sketch the proof in Section~\ref{subsecCarl}.
However, the proof of the fact that the term \eqref{dt-v-0} can be added in the left hand side of \eqref{Carlemgral} when $z(0,\cdot) = 0$ in $\Omega$ is not explicitly written in the aforementioned references, although this is one of the important point of the proof of the stability result in \cite{ImYamIP01,ImYamCom01}. Nevertheless, the idea can be adapted easily from \cite{BaudouinDeBuhanErvedoza}, as we will detail below.
\\

Before giving the details of the proof of Theorem~\ref{Thm-Algo2Converges}, let us first briefly explain the main idea of the design of Algorithm \ref{Algo}, which turns out to be very similar to the one of Algorithm \ref{Algo-old}.
Indeed, both Algorithms \ref{Algo-old} and \ref{Algo} are constructed from the fact that if $W[Q]$ is the solution of equation \eqref{EqW} and $w[q^k]$ solves \eqref{Eqwk}, then 
\begin{equation}
	\label{z-k-exact}
	z^k = \partial_t \left( w[q^k] - W[Q]\right)
\end{equation}
satisfies
\begin{equation}
	\label{EqExacteWk}
	\left\{
		\begin{array}{ll}
					\partial_{t}^2 z^k - \Delta z^k + q^k z^k = g^k, & \tn{in } (0,T) \times \Omega, 
				\\
					z^k = 0, & \tn{on } (0,T) \times \partial \Omega, 
				\\
					z^k(0) = 0, \quad \partial_t z^k(0) = z_1^k, \qquad& \tn{in } \Omega,
		\end{array}
	\right.	
\end{equation}
where
$g^k =   (Q- q^k) \partial_t W[Q]$,  $z_1^k =  (Q- q^k) w_0,$
and we have
$
		 \mu^k = \partial_n z^k \tn{ on } (0,T) \times \Gamma_0.
$

In system \eqref{EqExacteWk}, the source $g^k$ and the initial data  $z_1^k$ are both unknown, and we are actually interested in finding a good approximation of $z_1^k$, which encodes the information on $Q - q^k$. In order to do so, we will try to fit ``at best'' the flux $\partial_n z$ with $\mu^k$ on the boundary, approximating the unknown source term $g^k$ by $0$.

This strategy works as we can prove that the source term $g^k$ brings less information than $\mu^k$ does, and this is where the choice of the Carleman parameter $s$ will play a crucial role. This is actually the milestone of the 
construction of Algorithm~\ref{Algo-old} and its convergence result \cite{BaudouinDeBuhanErvedoza}. Here, when considering the functional $J_{s, q}[\eta(\varphi) \mu]$ defined in \eqref{FunctionalJ}, we rather try to approximate $\tilde z^k = \eta(\varphi) z^k$, which enjoys the following properties:
\begin{itemize}
	\item $\partial_t \tilde z^k(0,\cdot) = \eta(\varphi(0)) \partial_t z^k (0,\cdot) = (Q - q^k) w_0$ encodes the information on $Q- q^k$; 
	\item $\tilde z^k = \eta(\varphi) z^k$ vanishes in domain $\mathcal{O}$ defined by \eqref{mathcal-O} and on the boundary in time $t = T$;
	\item $\partial_n \tilde z^k = \tilde \mu^k$ in $(0,T) \times \Gamma_0$.
\end{itemize}
These ideas are actually behind the proofs of the inverse problem stability by compactness uniqueness arguments as in \cite{PuelYam96,PuelYam97,Yam99} or by Carleman estimates given in \cite{ImYamIP01,ImYamCom01,ImYamIP03,Baudouin01}.
%
\subsection{Sketch of the proof of the Carleman estimate}\label{subsecCarl}

Since a lot of different references, several of them mentioned right above, present detailed proof of Carleman estimates for the wave equation, we only give here the main calculations yielding the result presented in Theorem~\ref{Carlemangral}.

\begin{proof}
	Set 
	$
		y (t,x)=z(t,x)e^{s\varphi(t,x)} 
	$
	for all $ (t,x)\in  (-T,T)\times \Omega$, and introduce the conjugate operator $\mathscr{L}_{s} $ defined by  
	$\mathscr{L}_{s} y = e^{s\varphi}(\partial_t^2 - \Delta) (e^{-s\varphi} y)$. Easy computations give
	\begin{multline}
		\label{Def-L-s}
 		\mathscr{L}_{s} y = \underbrace{\partial_t^2 y -\Delta y +s^2(|\partial_t \varphi|^2-|\nabla\varphi|^2)y}_{ = P_1 y} 
		\, \underbrace{ -~2s \partial_t y\partial_t \varphi + 2 s \nabla y \cdot \nabla\varphi + \alpha s y }_{ = P_2 y}  
		\\
		\, \underbrace{-~ s (\partial_t^2 \varphi - \Delta \varphi ) y - \alpha s y }_{= Ry}
	\end{multline}
	where we have set $\alpha = 2 d - 2$, $d$ being the space dimension. Based on the estimate
	\begin{eqnarray}
	2\int_{-T}^{T} \int_{\Omega}P_{1}yP_{2}y\,dxdt 
	&\leq& 
	 \int_{-T}^{T} \int_{\Omega}\left( |P_{1}y| ^2 + | P_{2}y| ^2\right)\,dxdt
	+~2\int_{-T}^{T} \int_{\Omega}P_{1}yP_{2}y\,dxdt 
	\nonumber\\
	&\leq& 2 \int_{-T}^{T} \int_{\Omega} |\mathscr{L}_s y|^2 \,dxdt + 2 \int_{-T}^{T} \int_{\Omega} |Ry|^2\,dxdt,
		\label{PP}
	\end{eqnarray}
	the main part of the proof consists in the computation and bound from below of the cross-term 
	$$
		I = \int_{-T}^{T} \int_{\Omega}P_{1}y\, P_{2}y\,dxdt.
	$$ 
	Tedious computations and integrations by parts yield
	\begin{eqnarray*}
	I & =&  s \int_{-T}^{T} \int_{\Omega} |\partial_t  y|^2 (  \partial_t^2 \varphi +  \Delta \varphi-\alpha) \,dxdt 
	+ s \int_{-T}^{T} \int_{\Omega} |\nabla y|^2 \, (\partial_t^2 \varphi -\Delta \varphi + \alpha + 4  )  \,dxdt  
	   \\
	 &&+  ~ s^3 \int_{-T}^{T} \int_{\Omega}|y|^2 \left[ \partial_t \left( \partial_t \varphi (|\partial_t \varphi|^2-|\nabla\varphi|^2)   \right)  
	  	+ \alpha (|\partial_t \varphi|^2-|\nabla\varphi|^2) \right.
		\\
	&&\hspace{7cm}	\left. - \nabla \cdot \left( \nabla\varphi(|\partial_t \varphi|^2-|\nabla\varphi|^2)  \right) \right]  \,dxdt
	 \\
	&&-  ~ s \left[ \int_{\Omega} \left( |\partial_t  y|^2 + |\nabla y|^2 \right)\partial_t \varphi  \,dx\right]_{-T}^T + 2 s \left[ \int_{\Omega}\partial_t y \,( \nabla y \cdot \nabla\varphi) \,dx\right]_{-T}^T 
	\\
	&&-  ~s^3 \left[  \int_{\Omega} y^2 (|\partial_t \varphi|^2-|\nabla\varphi|^2)  \partial_t \varphi \,  \,dx\right]_{-T}^{T}
	+  \alpha  s \left[ \int_{\Omega}\partial_t y \, y \,dx\right]_{-T}^T 
	\\
	&&- ~ s \int_{-T}^{T} \int_{\partial \Omega} |\partial_n  y |^2 \partial_n \varphi \,d\sigma dt. 
	\end{eqnarray*}
	Let us now briefly explain how each term can be estimated.
	\\
	
	$\bullet$ We focus on the terms in $s| \partial_t y|^2 $ and $s|\nabla y|^2$ in order to insure that they are strictly positive. Taking $\alpha = 2 d - 2$, this means
\begin{eqnarray*}
		\partial_t^2 \varphi +  \Delta \varphi-\alpha = -2\beta + 2d - \alpha = 2 (1 - \beta)\quad \tn{ and } \\
		\partial_t^2 \varphi -\Delta \varphi + \alpha + 4 = -2\beta -2d + \alpha +4  = 2 (1 - \beta),
\end{eqnarray*}
	that are positive thanks  to the assumption $\beta\in(0,1)$.
	\\
	
	$\bullet$ The terms in $s^3|y|^2$ can be rewritten as follows (since $\nabla^2 \varphi = 2 \rm{Id}$):
	\begin{align*}
	 &\partial_t \left( \partial_t \varphi (|\partial_t \varphi|^2-|\nabla\varphi|^2)   \right)  + \alpha (|\partial_t \varphi|^2-|\nabla\varphi|^2)  - \nabla \cdot \left( \nabla\varphi(|\partial_t \varphi|^2-|\nabla\varphi|^2)  \right) \\
	 & =  (\partial_t^2 \varphi - \Delta \varphi + \alpha )  (|\partial_t \varphi|^2-|\nabla\varphi|^2) + 2 |\partial_t \varphi|^2 \partial_t^2 \varphi + 2 \nabla^2 \varphi \cdot  \nabla \varphi \cdot \nabla \varphi\\
	 & =  (-6 \beta - 2d + \alpha )  (|\partial_t \varphi|^2-|\nabla\varphi|^2) + 4 (1 - \beta)  |\nabla \varphi|^2 \\
	 & = - (2+6 \beta )  (|\partial_t \varphi|^2-|\nabla\varphi|^2) + 4 (1 - \beta)  |\nabla \varphi|^2.
	 \end{align*}
	This quantity is bounded from below by a strictly positive constant in the region of $(-T, T) \times \Omega$ in which
	$$
		|\partial_t \varphi(t,x)|^2-|\nabla\varphi(t,x)|^2 \leq 0 \Longleftrightarrow \beta t \leq |x-x_0|, 
	$$
	i.e. the complementary of the set $\big\{ (t,x) \in (-T, T) \times \Omega \hbox{ with } (|t|, x) \in \mathcal{O}\big\}$ where $\mathcal{O}$ satisfies \eqref{mathcal-O}.
	\\
	
	$\bullet$ We now estimate the boundary terms in time\footnote{The authors acknowledge Xiaoyu Fu for having pointed out to us the fact that these boundary terms have positive signs.} appearing at time $t=T$ and $t = - T$. We focus on the terms at time $T$, as the ones at time $-T$ can be handled similarly. Let us first collect them:
	\begin{multline*}
		I_T := 2 s \beta T \int_\Omega \left( |\partial_t  y(T)|^2 + |\nabla y(T)|^2 \right) \, dx
		+ 
		8 s^3 \beta T \int_{\Omega} |y(T)|^2 (\beta^2 T^2-|x-x_0|^2) \,  \,dx
		\\
		+ 
		4s \int_\Omega \partial_t y(T) \left(\nabla y(T)\cdot (x-x_0) + \frac{\alpha}{4} y(T) \right) \, dx.
	\end{multline*}	
	The first and second terms are obviously positive (under Condition \eqref{Time-Condition} for the second one), so we only need to check that they are sufficiently positive to absorb the last term, whose sign is unknown. We remark that
	\begin{eqnarray*}
		\lefteqn{
		\int_\Omega \left| \nabla y(T)\cdot (x-x_0) + \frac{\alpha}{4} y(T)\right|^2\, dx
		}\\
		& = &
		\int_\Omega \left| \nabla y(T)\cdot (x-x_0)\right|^2\, dx
		+
		\frac{\alpha}{4} \int_\Omega (x-x_0) \cdot \nabla \left(|y(T)|^2\right) \, dx
		+ 
		\frac{\alpha^2}{16} \int_\Omega |y(T)|^2\, dx
		\\
		& = &
		\int_\Omega \left| \nabla y(T)\cdot (x-x_0)\right|^2\, dx
		+ \left(\frac{\alpha^2}{16}- \frac{\alpha d}{4}\right) \int_\Omega |y(T)|^2\, dx
		\\
		&\leq&	
		\sup_\Omega \left\{ |x -x_0|^2\right\} \int_\Omega \left| \nabla y(T)\right|^2\, dx , 
	\end{eqnarray*}
	since $\alpha = 2d-2$ gives $\alpha^2 - 4 \alpha d = - 4 (d - 1) (d +1) \leq 0$. This inequality allows to deduce, by Cauchy-Schwarz inequality, that 
	\begin{multline*}
		4s \int_\Omega \partial_t y(T) \left(\nabla y(T)\cdot (x-x_0) + \frac{\alpha}{4} y(T) \right) \, dx
		\\
		 \leq 
		2s \sup_\Omega \left\{ |x -x_0|\right\} \left( \int_\Omega \left( |\partial_t  y(T)|^2 + |\nabla y(T)|^2 \right) \, dx\right).
	\end{multline*}
	Using again Condition \eqref{Time-Condition}, we easily obtain $I_T \geq 0$.
	\\
	\par
	Gathering these informations, and using the geometric condition \eqref{GCC-multiplier} on $\Gamma_0$, it yields that there exists a constant $M>0$ independent of $s$ such that
	\begin{multline*}
	\int_{-T}^{T} \int_{\Omega} P_{1} yP_{2}y\,dxdt  
	\geq 
	M s \int_{-T}^{T} \int_{\Omega} \left( |\partial_t y|^2 + |\nabla y |^2 + s^2 |y|^2\right) \,dxdt 
	\\
	- M s \int_{-T}^{T} \int_{\Gamma_0} \left| \partial_n y\right|^2 \,d\sigma dt 
	- M s^3 \iint_{ (|t|, x) \in \mathcal{O}} |y|^2 \,dxdt.
	\end{multline*}
	From \eqref{PP}, we easily derive
	\begin{multline*}
		s \int_{-T}^{T} \int_{\Omega}\left(|\partial_t y| ^2+|\nabla y|^2 + s^2 |y|^2 \right)\,dxdt 
		+\int_{-T}^{T} \int_{\Omega}\left( |P_{1}y|^2 + | P_{2}y|^2\right)\,dxdt 
		\\
		\leq M\int_{-T}^{T} \int_{\Omega} |\mathscr{L}_{s}y|^2\,dxdt  
		+M s^2 \int_{-T}^{T} \int_{\Omega}  |y|^2\,dxdt
		\\
		+~M s \int_{-T}^{T} \int_{\Gamma_0} \left| \partial_n y\right|^2 \,d\sigma dt + M s^3 \iint_{ (|t|, x) \in \mathcal{O}} |y|^2 \,dxdt.
	\end{multline*}
	We take now $s_0$ large enough in order to make sure that the term in $s^2|y|^2$ of the right hand side is absorbed by the dominant term in $s^3 |y|^2$ of the left  hand side as soon as $s\geq s_0$ and we obtain
	\begin{multline}
		s \int_{-T}^{T} \int_{\Omega}\left(| \partial_t y|^2+|\nabla y|^2 + s^2| y|^2 \right)\,dxdt +\int_{-T}^{T} \int_{\Omega}\left( |P_{1}y|^2 + | P_{2}y| ^2\right)\,dxdt
		 \\
		\leq M\int_{-T}^{T} \int_{\Omega} |\mathscr{L}_{s} y|^2\,dxdt  	+M s \int_{-T}^{T} \int_{\Gamma_0} \left| \partial_n y\right|^2 \,d\sigma dt + M s^3 \iint_{ (|t|, x) \in 		\mathcal{O}} |y|^2 \,dxdt 
		\label{CarlemW}
	\end{multline}
	We then deduce \eqref{Carlemgral} by substituting  $y = z e^{s \varphi}$.
	\\

	Furthermore, under the additional condition $z(0,\cdot) = 0$ in $\Omega$, we get $y(0,\cdot) = 0$ in~$\Omega$. 
	We then choose $\rho : t\mapsto  \rho(t)$ a smooth function such that $\rho(0) = 1$ and $\rho$ vanishes close to $t =  -T$. We multiply $P_1 y$ by $\rho \partial_t y$ and integrate over $(-T,0)\times \Omega$, to get
	\begin{eqnarray*}
	\lefteqn{
	\int_{-T}^0\int_{\Omega} P_1 y \,\rho \partial_ty\,dxdt 
		 =  \int_{-T}^0\int_{\Omega} \left(\partial_t^2y-\Delta y+s^2((\partial_t\varphi)^2-|\nabla\varphi|^2)y\right)\,\rho \partial_t y\,dxdt 
		 }
	\\
	& = & \dfrac 12 \int_{-T}^0\int_{\Omega} \rho \partial_t\left( |\partial_ty|^2 + |\nabla y|^2 \right)\; dxdt 
	 +~\dfrac  {s^2}2  \int_{-T}^0\int_{\Omega} \rho (|\partial_t\varphi|^2-|\nabla\varphi|^2) \partial_t (y^2)\,dxdt 
	 \\
	& = & \dfrac 12\int_{\Omega} |\partial_ty(0)|^2 \,dx - \dfrac 12 \int_{-T}^0\int_{\Omega} \partial_t\rho \left( |\partial_ty|^2 + |\nabla y|^2 \right)
	+ s^2\partial_t \left( \rho ( |\partial_t\varphi|^2-|\nabla\varphi|^2)\right) y^2 dxdt 
	\\
	& \geq &\dfrac 12\int_{\Omega} |\partial_ty(0)|^2  \,dx  - M  \int_{-T}^0\int_{\Omega}  \left( |\partial_t y|^2 + |\nabla y|^2 +s^2 |y|^2\right) \,dxdt.
	\end{eqnarray*}
	By Cauchy-Schwarz inequality, this implies
	$$
		s^{1/2} \int_{\Omega} |\partial_ty(0)|^2  \,dx 
		\leq	\int_{-T}^T\int_{\Omega} |P_1y|^2\,dxdt 
		+M s \int_{-T}^T\int_{\Omega}\left( |\partial_ty|^2 + |\nabla y|^2 + s^2 |y|^2\right)\,dxdt.
	$$
	Using \eqref{CarlemW} and $y = z e^{s \varphi}$, we easily deduce the estimate of term \eqref{dt-v-0} and conclude the proof of Theorem~\ref{Carlemangral}.
\end{proof}
From this proof of Theorem~\ref{Carlemangral}, we can directly  exhibit (see \eqref{CarlemW}) the following ``conjugate'' Carleman estimate, of practical interest later on:
\begin{corollary}\label{Cor-Carleman-Conjugate}
	Assume the multiplier condition \eqref{GCC-multiplier}-\eqref{GCC-Time} and $\beta \in (0,1)$ as in \eqref{Time-Condition}. Define the weight function $\varphi$ as in \eqref{poids-double}. Then there exist constants $M>0$ and $s_0 >0$ such that for all $s \geq s_0$, 
	\begin{multline}
		s \int_{-T}^{T} \int_{\Omega}\left(| \partial_t y|^2+|\nabla y|^2 + s^2| y|^2 \right)\,dxdt +\int_{-T}^{T} \int_{\Omega}\left( |P_{1}y|^2 + | P_{2}y| ^2\right)\,dxdt
		\\
		\leq M\int_{-T}^{T} \int_{\Omega} |\mathscr{L}_{s} y|^2\,dxdt  	+M s \int_{-T}^{T} \int_{\Gamma_0} \left| \partial_n y\right|^2 \,d\sigma dt + M s^3 \iint_{ (|t|, x) \in 		\mathcal{O}} |y|^2 \,dxdt 
		\label{CarlemY}
	\end{multline}
	for all $y \in C^0([-T,T]; H^\fcar_0(\Omega)) \cap C^1([-T,T]; L^2(\Omega))$, with $\mathscr{L}_s y \in L^2((-T,T) \times \Omega)$, where $\mathscr{L}_s$, $P_1$ and $P_2$ are defined in \eqref{Def-L-s}. 
	\\
	Furthermore, if $y(0, \cdot) = 0$ in $\Omega$, one can add the term $~s^{1/2} \ds\int_{\Omega} |\partial_ty(0)|^2  \,dx ~$
	to the left hand-side of \eqref{CarlemY}.
\end{corollary}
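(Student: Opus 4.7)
The plan is to observe that Corollary~\ref{Cor-Carleman-Conjugate} is essentially already established during the proof of Theorem~\ref{Carlemangral}: the inequality \eqref{CarlemW} is stated in terms of $y$, $P_1 y$, $P_2 y$ and $\mathscr{L}_s y$, with no reference to the original variable $z$. The substitution $y = z e^{s\varphi}$ is only performed at the very last step, to derive the unconjugated estimate \eqref{Carlemgral}. Hence the strategy is simply to stop the proof of Theorem~\ref{Carlemangral} at the intermediate stage \eqref{CarlemW}.

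More concretely, I would first recall the decomposition $\mathscr{L}_s y = P_1 y + P_2 y + R y$ given in \eqref{Def-L-s}, which is a purely algebraic identity for any smooth $y$, independent of whether $y$ arises from a conjugation. The expansion of the cross-term $I = \int_{-T}^T \int_\Omega P_1 y \, P_2 y \, dx\, dt$ produces exactly the same integrals as in the proof of Theorem~\ref{Carlemangral}, and each of them is estimated verbatim: the bulk terms in $s|\partial_t y|^2$ and $s|\nabla y|^2$ carry the positive coefficient $2(1-\beta)$; the $s^3|y|^2$ term is positive outside the set $\{(t,x) : (|t|,x)\in\mathcal O\}$; the spatial boundary term produces $-Ms \int_{(-T,T)\times\Gamma_0} |\partial_n y|^2 \, d\sigma\, dt$ thanks to the multiplier condition \eqref{GCC-multiplier}; and the temporal boundary terms at $t = \pm T$ are non-negative by virtue of \eqref{Time-Condition} together with the Cauchy--Schwarz computation using $\alpha^2 - 4\alpha d = -4(d-1)(d+1) \leq 0$. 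Combining this lower bound with the trivial identity $2\int P_1 y\, P_2 y = \int |\mathscr{L}_s y - Ry|^2 - \int |P_1 y|^2 - \int |P_2 y|^2$ and absorbing the $\int |Ry|^2 \lesssim s^2 \int |y|^2$ contribution into the dominant $s^3 \int |y|^2$ term of the left-hand side, for $s\geq s_0$ large enough, gives \eqref{CarlemW}, which is precisely \eqref{CarlemY}.

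For the additional term $s^{1/2} \int_\Omega |\partial_t y(0)|^2\, dx$ when $y(0,\cdot) = 0$, I would reproduce the cut-off argument already used at the end of the proof of Theorem~\ref{Carlemangral}: pick a smooth $\rho$ on $[-T,0]$ with $\rho(0) = 1$ and vanishing near $-T$, multiply $P_1 y$ by $\rho \partial_t y$, and integrate by parts on $(-T,0)\times\Omega$. This yields
\[
s^{1/2} \int_\Omega |\partial_t y(0)|^2\, dx
\leq \int_{-T}^T \int_\Omega |P_1 y|^2 \, dx\, dt
+ M s \int_{-T}^T \int_\Omega \bigl(|\partial_t y|^2 + |\nabla y|^2 + s^2 |y|^2\bigr)\, dx\, dt,
\]
and both terms on the right are already controlled by the right-hand side of \eqref{CarlemY}. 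Adding this to \eqref{CarlemY} concludes.

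Since the entire technical content is the computation already carried out for Theorem~\ref{Carlemangral}, there is no real obstacle; the only point requiring care is to verify that the conjugation $y = z e^{s\varphi}$ is used in the proof of Theorem~\ref{Carlemangral} solely at the final substitution step, so that every prior inequality stated in the variable $y$ transfers unchanged to the present corollary. In particular, the hypothesis $z(0,\cdot) = 0$ in Theorem~\ref{Carlemangral} is equivalent to $y(0,\cdot) = 0$, justifying the same cut-off argument here.
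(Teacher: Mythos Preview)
Your proposal is correct and matches the paper's approach exactly: the paper does not give a separate proof of the corollary but simply points to the intermediate inequality \eqref{CarlemW} obtained during the proof of Theorem~\ref{Carlemangral}, and your write-up is precisely an explicit unpacking of that observation together with the same cut-off argument for the $s^{1/2}\int_\Omega |\partial_t y(0)|^2\,dx$ term.
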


%
\subsection{Proof of the convergence theorem}\label{Subsec-ProofConvThm}
%
\begin{proof}[Proof of Theorem~\ref{Thm-Algo2Converges}]	
	Let us first begin by showing that Algorithm \ref{Algo} is well-defined. We introduce
	\begin{multline*}
		\mathcal{T}_{q} = 
			\Big\{
				z \in C^0([0,T]; H_0^1(\Omega)) \cap C^1([0,T]; L^2(\Omega)),  
				\\
				\hbox{ with }  \partial_{t}^2 z - \Delta z  +q z\in L^2((0,T) \times \Omega ) \hbox{ and }
				z(0,\cdot) = 0 \hbox{ in } \Omega
			\Big\}, 	
	\end{multline*}
	endowed with the norm
	\begin{multline*}
		\| z\|^2_{\textnormal{obs},s,q} = 
			 \int_{0}^{T}\int_\Omega e^{2s\varphi} |\partial_{t}^2 z - \Delta z +q z|^2 \,dxdt
			+s \int_{0}^{T}\int_{\Gamma_0} e^{2s\varphi}|\partial_n z|^2 \,d\sigma dt
			\\
			+ s^3 \iint_{\mathcal O} e^{2s\varphi}  |z|^2 \,dxdt .
	\end{multline*}
	The proof that this quantity is a norm on $\mathcal{T}_{q}$ stems from the Carleman estimate of Theorem~\ref{Carlemangral} applied to $z_e(t,x) = z(t,x)$ for $t \in [0,T]$ and $z_e(t,x) = - z(-t,x)$ for $t \in [-T,0]$, $x \in \Omega$. Indeed, \eqref{Carlemgral} applied to $z_e$ yields for all $s \geq s_0$,
	\begin{eqnarray*}
		s^3 \int_0^T \int_\Omega e^{2 s \varphi} |z|^2 \, dxdt 
		&\leq& 
		2 M\int_{0}^{T} \int_{\Omega} e^{2s\varphi} |\partial_t^2 z - \Delta z + q z|^2\,dxdt 
		\\
		&&+~ 
		2M \norm{q}_{L^\infty(\Omega)}^2 \int_0^T \int_\Omega e^{2 s \varphi} |z|^2 \, dxdt 
		\\
		&&+~ 
		Ms  \int_{0}^{T} \int_{\Gamma_{0}} e^{2s\varphi} \left|\partial_n z\right|^2\,d\sigma dt
		+
		M s^3 \iint_{\mathcal O} e^{2s\varphi}  |z|^2 \,dxdt 
		, 
	\end{eqnarray*}
	so that $\|\cdot \|_{\textnormal{obs},s,q}$ is a norm on  $\mathcal{T}_{q}$ provided $s$ is large enough, and then for all $s>0$ as the weight functions are bounded on $[0,T] \times \overline{\Omega}$. This immediately implies that $J_{s,q}[\tilde \mu]$ defined in \eqref{FunctionalJ} is coercive and strictly convex on the set $\mathcal{T}_{q}$,  so that it admits a unique minimizer and as a consequence, Algorithm \ref{Algo} is well-defined. 
	
	Moreover, this shows that the class $\mathcal{T}_{q}$, which was {\it a priori} dependent of $q$, is in fact independent of $q$ (for $q \in L^\infty(\Omega)$) and is simply given by 
\begin{multline*}
		\mathcal{T} = 
			\Big\{
				z \in C^0([0,T]; H_0^1(\Omega)) \cap C^1([0,T]; L^2(\Omega)),  
				\\
				\hbox{ with }  \partial_{t}^2 z - \Delta z  \in L^2((0,T) \times \Omega ) \hbox{ and }
				z(0,\cdot) = 0 \hbox{ in } \Omega
		\Big\}. 	
\end{multline*}
	In order to show estimate \eqref{ConvergenceAlgo2}, instead of considering only functionals of the form $J_{s,q}[\tilde \mu]$, we introduce slightly more general functionals $J_{s, q}[\tilde \mu, g]$ given for $s >0$, $q \in L^\infty(\Omega)$, $\tilde \mu \in L^2((0,T) \times \Gamma_0)$, $g \in L^2((0,T) \times \Omega)$ and for all  $z \in \mathcal{T}$, by:
	\begin{multline}
		\label{FunctionalJ-mu-g}
			J_{s,q}[ \tilde \mu,g](z ) 
				=  
			\frac{1}{2} \int_0^T \int_{\Omega} e^{2s \varphi} |\partial_{t}^2 z - \Delta z + q z -g |^2\,dxdt 
			\\
			+ 
			\frac{s}{2} \int_0^T \int_{\Gamma_0} e^{2s \varphi} | \partial_n z -   \tilde \mu |^2 \, d\sigma dt
			+ 
			\frac{s^3}{2} \iint_{\mathcal O} e^{2s\varphi}  |z|^2 \,dxdt.
	\end{multline}
	With the same argument as above, the functional $J_{s,q}[\tilde \mu, g]$ is coercive in the norm $\norm{\cdot}_{\textnormal{obs},s,q}$ and strictly convex, so that it admits a unique minimizer for each $\tilde \mu \in L^2((0,T) \times \Gamma_0 )$ and $g \in L^2((0,T) \times \Omega)$.

	We then observe that  $\tilde z^k := \eta(\varphi) z^k$, where $z^k$ satisfies \eqref{z-k-exact} (recall the definitions of $\eta$ in \eqref{eta} and $\varphi$ in \eqref{poids-double}, pictured in Figure~\ref{isoval}), is the minimizer of $J_{s, q^k}[\tilde \mu^k, \tilde g^k]$ with 
	\begin{equation}
	\label{gk}
		\tilde g^k = \eta(\varphi) (Q - q^k) \partial_t W[Q] + [\partial_t^2 - \Delta, \eta(\varphi)]  z^k,
	\end{equation}
	since it solves:
	\begin{equation}
	\label{Eqtildezk }
	\left\{
		\begin{array}{ll}
					\partial_{t}^2 \tilde z^k - \Delta \tilde z^k + q^k \tilde  z^k = \tilde g^k, & \tn{in } (0,T) \times \Omega, 
				\\
					\tilde z^k = 0, & \tn{on } (0,T) \times \partial \Omega, 
				\\
					\tilde z^k(0) = 0, \quad \partial_t \tilde z^k(0) = \eta(\varphi(0, \cdot)) z_1^k, \qquad& \tn{in } \Omega,
		\end{array}
	\right.	
	\end{equation}
	and 
	$
		\partial_n \tilde z^k = \tilde \mu^k = \eta(\varphi) \partial_t \left(\partial_n w[q^k] - \partial_n W[Q]\right) \tn{ on } (0,T) \times \Gamma_0 .
	$	
	
	We shall then compare $\widetilde Z^k$ and $\tilde z^k$, the minimizers of the functionals $J_{s, q^k}[\tilde \mu^k, 0]$ and $J_{s, q^k}[\tilde \mu^k, \tilde g^k]$ respectively, especially at the time $t = 0$ corresponding to the set in which the information on $(Q-q^k)$ is encoded. The  result is stated as follows:
	\begin{proposition}
		\label{PropDependenceG}
		Assume the geometric and time conditions \eqref{GCC-multiplier}-\eqref{GCC-Time} on $\Gamma_0$ and $T$, that $\beta$ is chosen as in \eqref{Time-Condition},
		and let $\mu \in L^2((0,T) \times \Gamma_0)$ and $g^a,\,  g^b \in L^2((0,T)\times \Omega )$. 
		Assume also that $q$ belongs to  $L^\infty_{\leq m} (\Omega)$ for $m>0$. 
		\\
		Let $Z^j$ be the unique minimizer of the functional $J_{s,q}[\mu, g^j]$ on $\mathcal{T}$ for $j \in \{a, b\}$. Then there exist positive constants $s_0(m)$ and $M = M(m)$ such that for $s \geq s_0(m)$ we have:
		\begin{equation}
			\label{EstMin-s}
			s^{1/2}\int_{\Omega} e^{2 s \varphi(0)} |\partial_t Z^a(0) -\partial_t Z^b(0)|^2  \,dx \leq M \int_0^T \int_{\Omega} e^{2 s\varphi} |g^a - g^b|^2 \,dxdt. 
		\end{equation}
		where $\varphi$ and $s_0(m)$ are chosen so that Theorem~$\ref{Carlemangral}$ holds.
	\end{proposition}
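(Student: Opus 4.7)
The natural approach is to write the Euler--Lagrange equations for the two minimizers, take their difference, and exploit the Carleman estimate of Theorem~\ref{Carlemangral} (the version that includes the boundary term \eqref{dt-v-0} at $t=0$). Since $J_{s,q}[\mu,g^j]$ is quadratic and strictly convex on $\mathcal{T}$, each minimizer $Z^j$ satisfies, for every $\zeta\in\mathcal{T}$,
\begin{multline*}
	\int_0^T\!\!\int_{\Omega} e^{2s\varphi}(\partial_t^2 Z^j-\Delta Z^j+qZ^j-g^j)(\partial_t^2\zeta-\Delta\zeta+q\zeta)\,dxdt\\
	+s\int_0^T\!\!\int_{\Gamma_0}e^{2s\varphi}(\partial_n Z^j-\mu)\,\partial_n\zeta\,d\sigma dt
	+s^3\iint_{\mathcal O} e^{2s\varphi}\,Z^j\zeta\,dxdt=0.
\end{multline*}
Subtracting the two identities, setting $\delta Z=Z^a-Z^b\in\mathcal T$, and testing with $\zeta=\delta Z\in\mathcal T$, the boundary and $\mathcal O$ contributions become positive squares, yielding an energy identity whose right--hand side only involves $(g^a-g^b)(\partial_t^2\delta Z-\Delta\delta Z+q\,\delta Z)$. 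A Cauchy--Schwarz/Young inequality then produces the preliminary bound
$$
	\|\delta Z\|_{\textnormal{obs},s,q}^2 \;\leq\; C\int_0^T\!\!\int_\Omega e^{2s\varphi}|g^a-g^b|^2\,dxdt.
$$

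Next, I would recover the pointwise-in-time information $\partial_t\delta Z(0,\cdot)$ by applying the Carleman estimate of Theorem~\ref{Carlemangral}, \emph{including} the additional term \eqref{dt-v-0}. Since $\delta Z(0,\cdot)=0$ in $\Omega$, I extend $\delta Z$ to $(-T,T)\times\Omega$ by the odd reflection $\delta Z_e(t,x)=-\delta Z(-t,x)$ for $t<0$; this extension lies in $C^0([-T,T];H^1_0(\Omega))\cap C^1([-T,T];L^2(\Omega))$, vanishes at $t=0$, and has $\partial_t^2\delta Z_e-\Delta\delta Z_e\in L^2((-T,T)\times\Omega)$, so Theorem~\ref{Carlemangral} applies. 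Writing $\partial_t^2\delta Z-\Delta\delta Z=(\partial_t^2\delta Z-\Delta\delta Z+q\,\delta Z)-q\,\delta Z$ and using $\|q\|_{L^\infty}\leq m$, the right--hand side of the Carleman estimate is controlled, up to the zero--order nuisance term $2Mm^2\int e^{2s\varphi}|\delta Z|^2$, by $\|\delta Z\|_{\textnormal{obs},s,q}^2$.

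The key absorption step is then to choose $s_0(m)$ large enough so that this nuisance term is swallowed by the $s^3\int e^{2s\varphi}|\delta Z|^2$ contribution on the left--hand side of the Carleman estimate; for $s\geq s_0(m)$ this gives
$$
	s^{1/2}\int_\Omega e^{2s\varphi(0)}|\partial_t\delta Z(0)|^2\,dx \;\leq\; M\,\|\delta Z\|_{\textnormal{obs},s,q}^2,
$$
and combining with the energy identity above yields \eqref{EstMin-s}. The only genuinely delicate point is this last absorption, which fixes the threshold $s_0(m)$ and hence the $m$-dependence of the constants; the Euler--Lagrange step and the odd reflection are standard once one has noticed that the $\mathcal O$-region and the trace terms are symmetric with respect to $t=0$ so that the full Carleman estimate on $(-T,T)$ reduces to twice the corresponding quantities on $(0,T)$.
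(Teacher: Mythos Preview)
Your proposal is correct and follows essentially the same route as the paper: Euler--Lagrange for each minimizer, subtraction and testing with $\delta Z=Z^a-Z^b$, Cauchy--Schwarz to bound $\|\delta Z\|_{\textnormal{obs},s,q}^2$, then the Carleman estimate of Theorem~\ref{Carlemangral} with the extra term \eqref{dt-v-0}. You are in fact more explicit than the paper on two points it leaves implicit---the odd extension to $(-T,T)$ and the absorption of the lower-order term $q\,\delta Z$ into the $s^3$ term (which is where the $m$-dependence of $s_0$ and $M$ enters)---but these are exactly the maneuvers already used earlier in Section~\ref{Subsec-ProofConvThm} when showing that $\|\cdot\|_{\textnormal{obs},s,q}$ is a norm.
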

	We postpone the proof of Proposition \ref{PropDependenceG} to the end of the section and first show how it can be used for the proof of Theorem~\ref{Thm-Algo2Converges}. \\
	Recall now that $\partial_t \tilde z^k(0,\cdot) = (Q - q^k) w^0$. Setting $\tilde q^{k+1}$ as in \eqref{Def-q-k+1}, we get from Proposition \ref{PropDependenceG} applied to $Z^a = \widetilde Z^k$ and $Z^b = \tilde z^k$ that 
	\begin{equation}
		s^{1/2} \int_\Omega e^{2 s \varphi(0)} | \tilde q^{k+1}- Q |^2 \, |w^0|^2 dx \leq M \int_0^T \int_\Omega e^{2s \varphi} |\tilde g^k|^2 \, dx dt.
	\end{equation}
	The next step is to get an estimates of $\tilde g^k$ defined by \eqref{gk}. Using the fact that $\left[ \eta(\varphi), \partial_t^2 - \Delta \right]z^k$ has support in a region where $ \varphi \leq d_0^2 := d(x_0, \Omega)^2$, we obtain
	\begin{eqnarray*}
		 \int_0^T \int_{\Omega} e^{2 s\varphi} |\tilde g^k|^2 \,dxdt 
		&\leq&  M \int_0^T \int_{\Omega} e^{2 s\varphi}| \eta(\varphi) (Q - q^k) \partial_t W[Q]|^2 \, dxdt \\
		&&\hfill{}+ ~M \int_0^T \int_{\Omega} e^{2 s\varphi} |\left[ \eta(\varphi), \partial_t^2 - \Delta \right]z^k|^2 \,dxdt 	\\
		&\leq& M \norm{W[Q]}_{H^1(0,T; L^\infty(\Omega))}^2 \int_{\Omega} e^{2 s\varphi(0)}|q^k-Q|^2 \, dx \\
		&&\hfill{}+~M e^{2 s d_0^2 } \int_0^T \int_{\Omega} \left( |\nabla z^k|^2 +  |\partial_t z^k|^2 + |z^k|^2\right)  \,dxdt. 
	\end{eqnarray*}
	Usual \textit{a priori} energy estimates for $z^k$ solution of equation \eqref{EqExacteWk} also yields
	\begin{multline}
		\label{aprioriestimate}
		\| z^k \|_{L^\infty (0,T;H_0^1(\Omega))} + \| \partial_t z^k \|_{L^\infty (0,T;L^2(\Omega))} 
		\leq M \left( \| z_1^k\|_{L^2(\Omega)}  + \| g^k \|_{L^1(0,T;L^2(\Omega))}  \right)
		\\
		\begin{array}{l}
		\ds \leq M  \| Q-q^k\|_{L^2(\Omega)} \left(\| w_0 \|_{L^\infty(\Omega)}  + \| \partial_t W[Q] \|_{L^1(0,T;L^\infty(\Omega))}  \right)
		\smallskip\\
		 \ds \leq  M\norm{W[Q]}_{H^1(0,T; L^\infty(\Omega))}  \| Q-q^k\|_{L^2(\Omega)},
		\end{array}
	\end{multline}
	so that combining the above estimates, we get
	\begin{multline*}
		s^{1/2} \int_\Omega e^{2 s \varphi(0)} |\tilde q^{k+1}- Q |^2 \, |w^0|^2 dx
		\leq M \norm{W[Q]}_{H^1(0,T; L^\infty(\Omega))}^2 \int_{\Omega} e^{2 s\varphi(0)}|q^k-Q|^2 \, dx  
		\\
		+ M \norm{W[Q]}_{H^1(0,T; L^\infty(\Omega))}^2 e^{2 s d_0^2} \| Q-q^k\|_{L^2(\Omega)}^2.
	\end{multline*}
	Using $\varphi(0, x) \geq d_0^2~$ for all $x$ in $\Omega$ and Assumption \eqref{Positivity}, we deduce
	\begin{equation}
		s^{1/2} \alpha^2 \int_\Omega e^{2 s \varphi(0)} |\tilde q^{k+1}- Q |^2 dx
		\leq M \norm{W[Q]}_{H^1(0,T; L^\infty(\Omega))}^2 \int_{\Omega} e^{2 s\varphi(0)}|q^k-Q|^2 \, dx.
	\end{equation}
	Now, using the \textit{a priori} assumption \eqref{A-priori-bound}, i.e. $Q \in L^\infty_{\leq m}(\Omega)$, we easily check that this estimate cannot deteriorate in step 4 of Algorithm \ref{Algo}, which is there only to ensure that the sequence $q^k$ stays in $L^\infty_{\leq m}(\Omega)$ for all $k \in \N$. This completes the proof of Theorem~\ref{Thm-Algo2Converges}.
\end{proof}
It only remains to prove the former proposition.
\begin{proof}[Proof of Proposition \ref{PropDependenceG}]
	Let us write the Euler Lagrange equations satisfied by $Z^j$, for $j \in \{a, b\}$. For all  $z \in \mathcal{T}$, we have
	\begin{multline}
			\label{EulerLagrangeIdentity}
				\int_0^T \int_{\Omega} e^{2s \varphi} 
				(\partial_t^2 Z^j - \Delta Z^j + q Z^j - g^j) (\partial_t^2 z - \Delta z + q z) \,dxdt\\
				+ s\int_0^T \int_{\Gamma_0} e^{2s \varphi} (\partial_n Z^j - \mu) \partial_n z \, d\sigma dt 
				+ s^3 \iint_{\mathcal O} e^{2s\varphi}  Z^j z \,dxdt 
				=0.
	\end{multline}
	 Applying \eqref{EulerLagrangeIdentity} for $j= a$ and $j= b$ to $z= Z = Z^a - Z^b$  and subtracting the two identities, we obtain:
	\begin{multline*}
			 \int_0^T \int_{\Omega} e^{2 s \varphi} |\partial_t^2 Z - \Delta Z + q Z|^2\, dxdt  
			+ s \int_0^T \int_{\Gamma_0} e^{2s \varphi} |\partial_n Z|^2 \, d\sigma dt
			+ s^3 \iint_{\mathcal O} e^{2s\varphi}  |Z|^2 \,dxdt 
			\\
			= \int_0^T \int_{\Omega} e^{2 s \varphi} (g^a - g^b) (\partial_t^2 Z - \Delta Z + q Z) \,dxdt .
	\end{multline*} 
	This implies
	\begin{multline}
			\label{EstErrorsF}
				\frac{1}{2} \int_0^T \int_{\Omega} e^{2 s \varphi}  |\partial_t^2 Z - \Delta Z + q Z|^2 \,dxdt  
				+ s \int_0^T \int_{\Gamma_0} e^{2s \varphi} |\partial_n Z|^2 \, d\sigma dt 	\\
				+ s^3 \iint_{\mathcal O} e^{2s\varphi}  |Z|^2 \,dxdt 
					\leq \frac{1}{2}  \int_0^T \int_{\Omega} e^{2 s \varphi} |g^a- g^b|^2\,dxdt .
	\end{multline}
	Since the left hand side of \eqref{EstErrorsF}  is  precisely the right hand side of the Carleman estimate \eqref{Carlemgral}, applying Theorem~\ref{Carlemangral} to $Z$, we immediately deduce \eqref{EstMin-s}.
\end{proof}
%
%
\section{Technical issues on the minimization of the cost functional}\label{SecminJ}

The goal of this section is to give several details about the actual construction of an efficient numerical algorithm based on Algorithm~\ref{Algo}. 
The main step in Algorithm \ref{Algo} is to minimize the functional $J_{s,q}[\tilde \mu]$, that we recall here for convenience, 
\begin{eqnarray*}
		J_{s,q}[ \tilde \mu](z ) 
		=  \frac{1}{2} \int_0^T \int_{\Omega} e^{2s \varphi} |\partial_{t}^2 z - \Delta z + q z |^2\,dxdt 
		+ \frac{s}{2} \int_0^T \int_{\Gamma_0} e^{2s \varphi} | \partial_n z -   \tilde \mu |^2 \, d\sigma dt
		\\
		+ \frac{s^3}{2} \iint_{\mathcal O} e^{2s\varphi}  |z|^2 \,dxdt ,
\end{eqnarray*}
and which is minimized on the set 
\begin{multline}
	\label{Def-Mathcal-T}
	\mathcal{T}
	= 
		\Big\{
				z \in C^0([0,T]; H_0^1(\Omega)) \cap C^1([0,T]; L^2(\Omega)),  
				\\
				\hbox{ with }  \partial_{t}^2 z - \Delta z  \in L^2((0,T) \times \Omega ) \hbox{ and }
				z(0) = 0 \hbox{ in } \Omega
		\Big\}. 	
\end{multline}
Due to the presence of large exponential factors in the functional, the minimization of $J_{s,q}[\tilde \mu]$ is not a straightforward task from the numerical point of view, even if, as we emphasized earlier, the minimization of $J_{s,q}[\tilde \mu]$ is much less stiffer than the one of $K_{s,q}[\mu]$ defined in \eqref{FunctionalK-k} \cite{BaudouinDeBuhanErvedoza}. We therefore propose the two following ideas: 
\begin{itemize}
	\item Work on the conjugate variable $y = z e^{s \varphi}$. This change of unknown acts as a preconditioner. Details are given in Section~\ref{Subsec-Conjugate}.
	\item A progressive algorithm to minimize the functional $J_{s,q}[\tilde \mu]$ in subdomains in which the variations of the exponential factors are small, see Section~\ref{Subsec-Progressive}.
\end{itemize}
%
\subsection{Conjugate variable}\label{Subsec-Conjugate}
%
For $z$ in $\mathcal{T}$, we set $y = z e^{s\varphi}$, 
so that $y$ satisfies the following equation:
$$
	\left\{
		\begin{array}{ll}
					\partial_{t}^2 y - \Delta y + q y -2s\partial_t \varphi \partial_t y + 2 s \nabla \varphi \cdot \nabla y &\\
					\quad -s (\partial_t^2 \varphi - \Delta \varphi) y + s^2 (|\partial_t \varphi |^2 - |\nabla \varphi |^2) y = e^{s \varphi} (\partial_t^2 - \Delta + q )z , & \tn{in } (0,T)  \times \Omega, \\
					y = 0, & \tn{on } (0,T) \times \partial \Omega, 
				\\
					y(0) = 0, \quad \partial_t y(0) = z_1e^{s\varphi(0)}, \qquad& \tn{in } \Omega,
		\end{array}
	\right.	
$$
where $\partial_t \varphi = -2\beta t $, $\nabla \varphi = 2  (x-x_0) $, $\partial^2_t \varphi = - 2\beta $ and $\Delta \varphi = 2 d$. We set 
$\mathscr{L}_{s,q}$ defined by $\mathscr{L}_{s,q}  = e^{s\varphi}(\partial_t^2 - \Delta +q) e^{-s\varphi} $:
\begin{eqnarray}
	\mathscr{L}_{s,q} y 
	& = &
	 \partial_{t}^2 y - \Delta y + q y -2s\partial_t \varphi \partial_t y + 2 s \nabla \varphi \cdot \nabla y  -s (\partial_t^2 \varphi - \Delta \varphi) y 
	 \nonumber\\
	&&\quad  + s^2 (|\partial_t \varphi |^2 - |\nabla \varphi |^2) y \nonumber \\
	 & = &
	 \partial_{t}^2 y - \Delta y + q y + 4 s \beta t \partial_t y + 4 s (x-x_0) \cdot \nabla y  +2 s (\beta + d) y 
	 \nonumber\\
	&&\quad  + 4 s^2 (\beta^2 t^2 - |x-x_0|^2) y.
 	 \label{LsqConjugateExplicit}
\end{eqnarray}	
Thus, minimizing $J_{s,q}[\tilde \mu]$ in \eqref{FunctionalJ} on the set $\mathcal{T}$ is equivalent to minimize the functional $\widetilde J_{s,q}[\tilde \mu]$ defined by 
$$
	\widetilde J_{s,q}[\tilde \mu](y) 
		=  
	\frac{1}{2} \int_0^T \int_{\Omega} |\mathscr{L}_{s,q} y  |^2\,dxdt
	+ 
	\frac{s}{2} \int_0^T \int_{\Gamma_0} | \partial_n y -  \tilde \mu e^{s\varphi}  |^2 \ d\sigma dt
	+ 
	\frac{s^3}{2} \iint_{\mathcal O}  y^2 \,dxdt 
$$
on the same set $\mathcal{T}$. 
The minimization process for $\widetilde J_{s,q}[\tilde \mu]$ is then equivalent to the resolution of the following variational formulation: 
\\
Find $Y \in \mathcal{T}$ such that for all $y \in \mathcal{T}$, 
\begin{multline}
		\int_0^T \int_{\Omega} \mathscr{L}_{s,q} Y \mathscr{L}_{s,q} y  \,dxdt
		+ 
		s \int_0^T \int_{\Gamma_0}  \partial_n  Y \partial_n y \ d\sigma dt
		+ 
		s^3 \iint_{\mathcal O}  Y y \,dxdt 
		\\
		= 
		s \int_0^T \int_{\Gamma_0}  e^{s\varphi}\tilde \mu  \partial_n y \ d\sigma dt.\label{VarForm}
\end{multline}
From the Carleman estimate \eqref{CarlemY} applied to $y$ extended for negative times $t$ by $y(t) = - y(-t)$, the left-hand side of \eqref{VarForm} defines a coercive quadratic form, while the exponentials now appear only in the right hand side of \eqref{VarForm}. Therefore, no exponential factor appears anymore in the computation of the gradient of the functional $\widetilde J_{s,q} [\tilde \mu]$.
Our next goal is to deal with the exponential factor still in front of $\tilde \mu$.

\subsection{Progressive process}\label{Subsec-Progressive}
%
 The idea to tackle the exponential factor in the right hand side of \eqref{VarForm} is to develop a progressive process to compute the minimizer of $\widetilde J_{s, q}[\tilde \mu]$ as the aggregation of several problems localized in subdomains in which the exponential factors are all of the same order.
In this objective, from the smooth cut-off function $\eta$ equal to $1$ for $\tau \geq d_0^2$ defined in \eqref{eta}, we introduce $N$ cut-off functions $\{ \eta_j \}_{1\leq j \leq N}$ (these ones are not necessarily smooth) such that
\begin{equation}
	\label{Eta-j-withrespecto-Eta}
	\forall \tau \in \mathbb R, \quad \sum_{j=1}^N \eta_j(\tau) = \eta(\tau), 
\end{equation}
as illustrated in Figure~\ref{cutoff}.
%
\begin{figure}[h]
\begin{center}
\begin{tikzpicture}[line cap=round,line join=round,x=2cm,y=2cm]
\draw[->,color=black] (-0.5,0) -- (2.3,0);
\draw[->,color=black] (0,-0.5) -- (0,1.5);
\draw[color=black] (0,0) node[below left] {$0$};
\draw[color=black] (2.3,0.) node[below] {$\tau$};
\draw[color=black] (0.4,0.) node[below] {$d_0^2$};
\draw[color=black] (0,1) node[left] {$1$};
\draw[color=black] (0.5,1) node[above] {$\eta_3$};
\draw[color=black] (1.3,1) node[above] {$\eta_2$};
\draw[color=black] (2.,1) node[above] {$\eta_1$};
\draw[-, color=red] (-0.5,0) -- (0,0);
\draw[color=red, samples=1000,domain=0.:0.365] plot(\x,{(sin(500*(\x-1.625))+1)/2});
\draw[-, color=red] (0.365,1)-- (0.73,1);
\draw[color=red, samples=1000,domain=0.73:1.095] plot(\x,{1-(sin(500*(\x-1.63))+1)/2});
\draw[color=red, samples=1000,domain=0.73:1.095] plot(\x,{(sin(500*(\x-1.63))+1)/2});
\draw[-, color=blue] (1.095,1.)-- (1.46,1);
\draw[color=red, samples=1000,domain=1.46:1.825] plot(\x,{1-(sin(500*(\x-1.63))+1)/2});
\draw[color=red, samples=1000,domain=1.46:1.825] plot(\x,{(sin(500*(\x-1.63))+1)/2});
\draw[-, color=red] (1.825,1.)-- (2.,1);
\draw[-, dash pattern=on 2pt off 2pt, color=red] (2.,1.)-- (2.3,1);
\draw[-, color=red] (1.095,1.)-- (1.46,1);
\draw[dash pattern=on 2pt off 2pt] (0.365,1)-- (0,1);
\draw[dash pattern=on 2pt off 2pt] (0.365,0)-- (0.365,1);
\draw (0,1.5) node[left] {$\eta$};
\end{tikzpicture}
\caption{Example of  cut-off functions $\eta_j$ for $1\leq j \leq 3$.}\label{cutoff}
\end{center}
\end{figure}
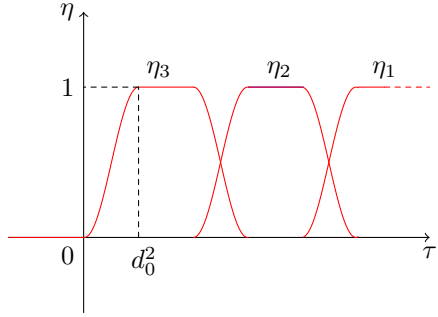

Therefore, the target flux $\tilde \mu = \eta(\varphi) \mu$ can be decomposed as follows:
\begin{equation}
	\label{Choice-of-tilde-muj}
	\tilde \mu = \eta(\varphi) \mu = \sum_{j = 1}^N \tilde \mu_j, 
\end{equation}
$$
\hbox{ where }  \tilde \mu_j (t,x) = \eta_j(\varphi(t,x)) \mu(t,x),   \forall  (t,x) \in (0,T) \times \Gamma_0, \forall j \in \{1, \cdots, N\}.
$$%
As the variational formulation in \eqref{VarForm} is linear in $\tilde \mu$, one immediately gets that, if for each $j \in \{1, \cdots, N\}$, we denote by $Y_j$ the minimizer of $\widetilde J_{s,q}[\tilde \mu_j]$ on $\mathcal{T}$, then the minimizer $Y$ of $\widetilde J_{s,q}[\tilde \mu]$ is simply given by 
$$
	Y = \sum_{j = 1}^N Y_j.
$$
The interest of this approach is that the target flux $\tilde \mu_j$ involves exponential terms in $\varphi$ on the support of $\eta_j(\varphi(t,x))$. This becomes particularly interesting if we impose that for each $j \in \{1, \cdots, N\}$,
\begin{equation}
	\label{Cond-Supp-EtaJ}
	\hbox{Supp } \eta_j \subset [a_j,b_j] \hbox{ with } b_j - a_j \leq C, 
\end{equation}
for some constant $C >0$. Indeed, in that case, we get 
$$
	\frac{\ds \sup_{\hbox{\scriptsize Supp\,} \eta_j(\varphi )} e^{s \varphi}}{ \ds \inf_{\hbox{\scriptsize Supp\,} \eta_j(\varphi )} e^{s \varphi} } \leq e^{s C}, 
$$
so that if $C \simeq 1/s$, all the exponentials are of the same order when computing $\tilde \mu_j$. Consequently, under the conditions \eqref{Eta-j-withrespecto-Eta}--\eqref{Choice-of-tilde-muj}--\eqref{Cond-Supp-EtaJ}, for all $j \in \{1, \cdots, N\}$, the minimization of $\widetilde J_{s,q}[\tilde \mu_j]$ over $\mathcal{T}$ is easier numerically than the direct minimization of $\widetilde J_{s,q}[\tilde \mu]$ over $\mathcal{T}$. Besides, this approach can be used, at least theoretically, to parallelize the minimization of $\widetilde J_{s,q}[\tilde \mu]$ over the set $\mathcal{T}$. 
\\

Let us present one possible way to construct the functions $\eta_j$ in practice, precisely the ones we used in our numerical experiments
(where we chose to use $C^\infty$ functions, even if it is not necessary). We set
$$
	d_0^2 = \inf_\Omega |x - x_0|^2
	\qquad \hbox{ and } \qquad
	L_0^2 = \sup_\Omega |x - x_0|^2.
$$
Let us then choose an integer $N \in \mathbb{N}^*$ and set $\varepsilon_0 = d_0^2/N$. Next, define the cut-off function $\eta$ as follows:
$$
	f(t) = \exp\left(\frac{-1}{t(\varepsilon_0-t)}\right), 
	\qquad 
	\text{and}
	\qquad 
	\eta(\tau) = 
	\left\{
		\begin{array}{ccc}
			0, & \text{if} & \tau \leq 0,
			\\
			1-\dfrac{\int_\tau^{\varepsilon_0} f(t) dt}{\int_0^{\varepsilon_0} f(t) dt}, &  \text{if} &0 < \tau < \varepsilon_0,
			\\
			1, &  \text{if} &\tau \geq \varepsilon_0.
		\end{array}
	\right.
$$
Thus we introduce the cut-off functions $\eta_j$ defined by the formula
\begin{eqnarray*}
		\eta_0(\tau) = \eta(\tau - L_0^2), 
		\quad
		\hbox{ and for } j \in \{1, \cdots, N\}, \quad \\
		\eta_j(\tau)  = \eta\left(\tau-L_0^2 \dfrac{N-j}{N}\right) - \eta\left(\tau-L_0^2 \dfrac{N-(j-1)}{N}\right)	.
\end{eqnarray*}
We then easily verify \eqref{Eta-j-withrespecto-Eta}, $\hbox{Supp\,} \eta_0 \subset \left] L_0^2 , +\infty\right[ $, and that 
$$
	\forall j \in \{1, \cdots, N\}, \quad \hbox{Supp\,} \eta_j \subset \left]L_0^2\left(1-\frac{j}{N}\right), L_0^2\left(1-\frac{j-1}{N}\right) +\frac{d_0^2}{N}  \right[.
$$
In particular, we have $\eta_0(\varphi(t,x)) = 0$ for all $(t,x) \in (-T,T)\times \Omega$ as $\varphi(t,x) \leq L_0^2$ for all $(t,x) \in (-T,T)\times \Omega$, so that we can omit $\eta_0(\varphi)$ in our approach. 
\\
By construction, the support of each $\eta_j$ for $j \in \{1, \cdots, N\}$ is included in an interval of size $(L_0^2 +d_0^2)/N$. We can then try to optimize the number $N$ of intervals in the progressive algorithm so that on each interval the weight function $\exp(s \varphi)$ varies of less than $5$ order of magnitude, for instance by taking $N$ as a function of $s$ as follows:
$$
	N = \Big\lfloor \frac{s (L_0^2+d_0^2)}{10}\Big\rfloor+1,
$$
where $\lfloor \cdot \rfloor$ denotes the integer part.

%
\section{Discrete setting for the algorithm}\label{SecImplAlg}

In this section, we present the technical solutions we have developed to implement numerically the algorithm. In order to simplify the presentation, from now on we focus on the one-dimensional case $\Omega = (0,L)$ and $\Gamma_0 = \{x = L\}$. We consider a semi-discrete in space and time-continuous approximation of our system, with a space discretization based on a finite-difference approximation method on a uniform mesh. In this restrictive setting, all our assertions can be fully proved rigorously by adapting the arguments in \cite{BaudouinErvedoza11,Baudouin-Ervedoza-Osses}. Though this might seem very restrictive, we believe that our approach can be generalized to fully discrete models and in higher dimensions for quasi-uniform meshes.
To begin with, we  introduce some notations for this 1-d space semi-discrete framework. The appropriate discrete Carleman estimate will follow. We will finally briefly present how we approximate the functional $J_{s,q}[\tilde \mu]$ in \eqref{FunctionalJ}. 
\subsection{Notations}
In our framework, the space variable $x \in [0,L]$ is taking values on a discrete mesh $[0,L]_h$ indexed by the number of points $N \in \N$. To be more precise, for $N \in \N$, we set $h = L/(N+1)$, $x_j  = j h$ for $j \in \{0, \cdots, N+1\}$, and $[0,L]_h = \{ x_j,\, j \in \{0,\cdots,  N+1\}\}$. For convenience, we will also note $(0,L)_h$, respectively $[0,L)_h$, the set of of discrete points $\{ x_j, \, j \in \{1, \cdots,N\} \}$, respectively $\{ x_j, \, j \in \{0, \cdots,N\} \}$.
\\
Below, we will use the subscript $h$ for discrete functions $f_h$ defined on a mesh of the form $[0,L]_h$ for some $N$, i.e. $f_h = ( f_{j}  )_{j \in \{0, \cdots, N+1\}}$. Analogously with the continuous case, we write:
\begin{equation}\label{intf}
	\ds\int_{(0,L)_h} f_h = h \ds\sum_{j=1}^{N} f_{j}, 
	\quad
	 \ds\int_{[0,L)_h} f_h	= h \ds\sum_{j=0}^{N} f_{j}. 
\end{equation}
We also make use of the following notation for the discrete operators:
\begin{eqnarray*}
	(\partial_h v_h)_{j} = \dfrac{v_{j+1} -v_{j-1}}{2 h}~;& 
	&
	(\partial^+_h v_h)_{j} = (\partial^-_h v_h)_{j+1} = \dfrac{v_{j+1} -v_{j}}{h}  ~;
	\\
	(\Delta_h v_h)_j & = &\dfrac{v_{j+1} - 2 v_{j}+v_{j-1}}{h^2}.
\end{eqnarray*}
By analogy with the definition of $\mathscr{L}_{s,q}$ in \eqref{LsqConjugateExplicit}, we finally introduce, for $s >0$ and $q_h$ a discrete potential, the conjugate operator $\mathscr{L}_{s,q_h,h}$ defined by 
\begin{equation}
 	 \label{LsqhConjugateExplicit}
	\mathscr{L}_{s,q_h,h} y_h = e^{s\varphi}(\partial_t^2 - \Delta_h + q_h) (e^{-s\varphi}y_h), 
\end{equation}
for $y_h$ functions of  $t \in (-T,T)$ and $x \in \{x_j, \, j \in \{1, \cdots, N\}\}$.
\\
Before going further, let us emphasize that the discrete operator $\mathscr{L}_{s,q_h,h}$ is different from the operator $\widetilde{\mathscr{L}}_{s,q_h,h}$ obtained by a naive discretization of $\mathscr{L}_{s,q}$ in \eqref{LsqConjugateExplicit} as follows:
\begin{eqnarray}
	\widetilde{\mathscr{L}}_{s,q_h,h} y_h
	  = 
	 \partial_{t}^2 y_h - \Delta_h y_h + q_h y_h + 4 s \beta t \partial_t y_h + 4 s (x-x_0) \partial_h y_h  
	 \nonumber
	 \\
	 +2 s (\beta + 1) y_h + 4 s^2 (\beta^2 t^2 - |x-x_0|^2) y_h, 
 	 \label{LsqhConjugateExplicit-Naive}
\end{eqnarray}
 for any function $y_h$ defined on $(-T,T)\times \{x_j, \, j \in \{1, \cdots, N\}\}$. 
\subsection{A discrete Carleman estimate for the discrete wave operator}\label{Subsec-DiscreteCarleman}
In this section, we provide the counterpart of Corollary \ref{Cor-Carleman-Conjugate} at the discrete level.
\begin{theorem}
	\label{Thm-DiscreteCarleman}
	Assume the multiplier condition \eqref{GCC-multiplier}-\eqref{GCC-Time} and $\beta \in (0,1)$ as in \eqref{Time-Condition}. Let $L>0$, take $x_0 < 0$, and define the weight function $\varphi$ as in \eqref{poids-double}. Then there exist  $s_{0}>0$, $N_0 >0$, $\varepsilon_0>0$ and a positive constant $M $ such that for all $ s \in [s_{0}, \varepsilon_0/h]$ and for all $N \geq N_0$,
	\begin{multline}
		\label{DiscreteCarleman}
		s \int_{-T}^{T} \int_{[0,L)_h} \left(|\partial_t y_h|^2 + |\partial_h^+ y_h|^2 + s^2 |y|^2 \right)\,dt 
		\\ \leq 
		M
		\int_{-T}^{T} \int_{(0,L)_h} |\mathscr{L}_{s,0, h} y_h |^2\,dt 
		+ Ms  \int_{-T}^{T} \left|\partial_h^- y_{N+1}(t)\right|^2\, dt 
		\\
		+M s^3 \int_{-T}^T \int_{(0,L_h)} \fcar_{(|t|, x_j) \in \mathcal O}  |y_h|^2 \,dt 
		+ M s h^2 \int_{-T}^T \int_{[0,L)_h} |\partial_t \partial_h^+ y_h|^2 \, dt,
	\end{multline}
	for all $y_h$ such that $y_{j}  \in H^2(-T,T)$ for all $j \in \{1, \cdots, N\}$, where $\mathcal{O}$ is defined in \eqref{mathcal-O}.
	\\
	Furthermore, if $y_h(0) = 0$ in $(0,L)_h$, the term 
	$
		s^{1/2} \ds\int_{(0,L)_h} |\partial_t y_h(0)|^2 
	$
	can be added to the left hand-side of \eqref{DiscreteCarleman}.
\end{theorem}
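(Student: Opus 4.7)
The plan is to follow closely the continuous proof of Theorem~\ref{Carlemangral} (sketched in Section~\ref{subsecCarl}) at the semi-discrete level, combined with the discrete Carleman machinery developed in \cite{BaudouinErvedoza11,Baudouin-Ervedoza-Osses}. Setting $y_h = z_h e^{s\varphi}$ so that $\mathscr{L}_{s,0,h} y_h$ is by construction the exact conjugate of $\partial_t^2 - \Delta_h$ (see \eqref{LsqhConjugateExplicit}), I would decompose
\[
	\mathscr{L}_{s,0,h} y_h = P_{1,h} y_h + P_{2,h} y_h + R_h y_h
\]
into discrete analogs of the symmetric, skew-symmetric, and zero-order parts appearing in \eqref{Def-L-s}. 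It is essential to work with the exact discrete conjugate $\mathscr{L}_{s,0,h}$ rather than with the naive version $\widetilde{\mathscr{L}}_{s,0,h}$ of \eqref{LsqhConjugateExplicit-Naive}, since otherwise the cross-term computation below would carry uncontrollable residuals.

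The core step is the lower bound on the cross term $I_h = \int_{-T}^T\int_{(0,L)_h} P_{1,h} y_h \, P_{2,h} y_h\, dt$, obtained by summation by parts in space and integration by parts in time along the pattern of the continuous computation. Because $\varphi$ is quadratic in $x$, the identities $\Delta_h \varphi = 2$ and $\partial_h \varphi(t,x_j) = 2(x_j - x_0)$ hold \emph{exactly}, so the positivity of the coefficients in front of $s|\partial_t y_h|^2$, $s|\partial_h^+ y_h|^2$ and $s^3|y_h|^2$ (outside the region $\mathcal O$) is preserved verbatim from the continuous case. The multiplier condition \eqref{GCC-multiplier} together with $x_0 < 0$ and $\Gamma_0 = \{L\}$ gives the correct signs for the spatial boundary terms, so that only the observation term $s\int |\partial_h^- y_{N+1}|^2 dt$ remains on the right-hand side; condition \eqref{Time-Condition} is used verbatim to absorb the time boundary terms at $t = \pm T$.

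The main technical obstacle, and the source of both the scaling restriction $s \leq \varepsilon_0/h$ and of the extra left-hand side term $sh^2 \int |\partial_t\partial_h^+ y_h|^2$, is the bookkeeping of discrete commutator remainders: each summation by parts in which $e^{\pm s\varphi}$ must be moved across $\partial_h^\pm$ generates a residual of the form $s^k h^\ell$ times a low-order discrete energy. Following the strategy of \cite{BaudouinErvedoza11}, all these remainders can be absorbed into the dominant terms of the left-hand side provided $sh$ stays bounded, \emph{except} for one leftover contribution involving the mixed time-space derivative $\partial_t\partial_h^+ y_h$ that cannot be absorbed and must therefore be kept on the left as $sh^2 \int |\partial_t\partial_h^+ y_h|^2$. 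This careful discrete bookkeeping, rather than any new algebraic insight, is what I expect to be the truly delicate part of the proof. After this absorption and after switching back from $y_h$ to the formulation using $\mathscr{L}_{s,0,h}$, we obtain \eqref{DiscreteCarleman}.

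Finally, to include the term $s^{1/2}\int_{(0,L)_h}|\partial_t y_h(0)|^2$ under the assumption $y_h(0) = 0$, I would transpose verbatim the time-cutoff argument from the end of Section~\ref{subsecCarl}: multiply $P_{1,h} y_h$ by $\rho(t)\partial_t y_h$ for a smooth time cut-off $\rho$ with $\rho(0) = 1$ and $\rho \equiv 0$ near $t = -T$, integrate over $(-T,0)$ (which is fully continuous, so no new discrete errors enter), and conclude by Cauchy--Schwarz using the discrete energies already controlled by the left-hand side of \eqref{DiscreteCarleman} multiplied by $s^{1/2}$. This last step is essentially identical to the continuous argument.
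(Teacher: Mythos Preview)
Your proposal is essentially correct and follows the same approach the paper indicates: the paper itself does not write out a proof but states that it ``follows step by step the proof of Theorem~\ref{Carlemangral} using discrete rules of integration by parts, which can be found in \cite[Lemma 2.6]{BaudouinErvedoza11}'', noting that the computation is ``particularly simple as the coefficients of $\mathscr{L}_{s,0,h}$ depend only on time or only on space variables.'' Your sketch is a faithful elaboration of this.

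One small correction: the extra high-frequency term $M s h^2 \int_{-T}^T \int_{[0,L)_h} |\partial_t \partial_h^+ y_h|^2\, dt$ sits on the \emph{right}-hand side of \eqref{DiscreteCarleman}, not the left. In your write-up you say it ``must therefore be kept on the left''; this should read ``kept on the right'': it is an unavoidable residual cost, not an additional controlled quantity. This is consistent with the paper's later discussion around \eqref{High-Freq-Carl-Term}, where this term is interpreted as a high-frequency penalty that must be added to the discrete functional.
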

The proof of Theorem~\ref{Thm-DiscreteCarleman} is left to the reader as it follows step by step the proof of Theorem~\ref{Carlemangral} using discrete rules of integration by parts, which can be found in \cite[Lemma 2.6]{BaudouinErvedoza11}. It is actually particularly simple as the coefficients of $\mathscr{L}_{s,0,h}$ depend only on time or only on space variables.
\\

Let us now briefly comment Theorem~\ref{Thm-DiscreteCarleman}. First, compared with Corollary \ref{Cor-Carleman-Conjugate}, we see that the right-hand side of \eqref{DiscreteCarleman} contains one more term than \eqref{CarlemY}, namely 
\begin{equation}
	\label{High-Freq-Carl-Term}
	M s h^2 \int_{-T}^T \int_{[0,L)_h} |\partial_t \partial_h^+ y_h|^2\, dt.
\end{equation}
This is a high-frequency term. Indeed, as $h \partial_h^+$ is of the order of $h |\xi|$ for frequencies $\xi$, this term can be absorbed for large $s$ by the left hand-side of \eqref{DiscreteCarleman} for frequencies $\xi = o(1/h)$. However, for frequencies of the order of the mesh-size $h$, this term cannot be absorbed anymore by the left hand-side of \eqref{DiscreteCarleman}. This is not surprising in view of the lack of uniform observability for discrete waves, see \cite{Zua05Survey}, and the various comments done in \cite{BaudouinErvedoza11} on the discrete Carleman estimates for the wave equation with weight functions $\exp(s \psi) = \exp( s \exp(\lambda (\varphi + C_0)))$. 
\\
Let us also point out that as in \cite{BaudouinErvedoza11}, the parameter $s$ in Theorem \ref{Thm-DiscreteCarleman} cannot be made arbitrarily large as in Theorem \ref{Carlemangral}, but is limited to some $\varepsilon_0/h$. Roughly speaking, this condition comes from the following fact: 
\begin{equation}
	\label{Why-sh-small}
	\norm{\exp(s \varphi) \partial_h (\exp(-s \varphi))+  s \partial_x \phi}_{L^\infty((0,T)\times \Omega))} 
	\leq C sh, 
\end{equation}
so that the coefficients of $\mathscr{L}_{s,0,h}$ in \eqref{LsqhConjugateExplicit} and $\widetilde{\mathscr{L}}_{s,0,h}$ in \eqref{LsqhConjugateExplicit-Naive} are close only for $sh$ small enough.
\\
We end up this section with a warning. If we were considering the operator $\widetilde{\mathscr{L}}_{s,0,h}$ in \eqref{LsqhConjugateExplicit-Naive} instead of $\mathscr{L}_{s,0,h}$ in \eqref{LsqhConjugateExplicit}, the restriction on the size of the parameter $s$ could be removed as the errors done in the conjugation process, for instance in \eqref{Why-sh-small}, are inexistent. However, when conjugating back the discrete operator $\widetilde{\mathscr{L}}_{s,0,h}$, one would not obtain  the discretization of the wave operator $\partial_{tt} - \Delta_h$, and this would yield inaccuracies in our numerical experiments.
%
\subsection{Semi-discretization scheme and algorithm}
%
We now explain the discretization in space of the variational problem \eqref{VarForm}.
\\
First, we have to discretize the set $\mathcal{T}$ in \eqref{Def-Mathcal-T}. We thus introduce the set $\mathcal{T}_h$ defined as follows:
\begin{multline}
	\label{Def-Mathcal-T-h}
	\mathcal{T}_h = \{z_h \in H^2(0,T; \mathbb{R}^{N+2}) \hbox{ with } z_{0,h}(t) = z_{N+1,h} (t) = 0 \hbox{ for all }t  \in (0,T)\\
	 \hbox{ and } z_{j,h}(0) = 0 \hbox{ for all } j \in \{1, \cdots, N\} \}.
\end{multline}
Following Theorem \ref{Thm-DiscreteCarleman}, it is natural to discretize the  variational problem \eqref{VarForm} as follows: Find $Y_h \in \mathcal{T}_h$ such that for all $y_h \in \mathcal{T}_h$,
\begin{multline}
	\label{VarForm-h}
		\int_0^T \int_{(0,L)_h} (\mathscr{L}_{s,q_h,h} Y_h) (\mathscr{L}_{s,q_h,h} y_h)  \,dt
		+ 
		s \int_0^T  \frac{Y_{N_h,h}}{h} \frac{y_{N_h,h}}{h} \,dt\\
		+ 
		s^3  \int_{-T}^T \int_{(0,L_h)} \fcar_{(|t|, x_j) \in \mathcal O} Y_h y_h \, dt 
		\\
		+ 
		s h^2 \int_0^T \int_{[0,L)_h} (\partial_t \partial_h^+ Y_h) (\partial_t \partial_h^+ y_h) \, dt		
		= 
		s \int_0^T   e^{s\varphi}\tilde \mu  \left(\frac{- y_{N_h,h}}{h} \right) dt.
\end{multline}	
Actually we will use this variational formulation (\ref{VarForm-h}) in the numerical experiments.
\\
Compared with \eqref{VarForm}, we have added here the term
$$
		s h^2 \int_0^T \int_{[0,L)_h} (\partial_t \partial_h^+ Y_h) (\partial_t \partial_h^+ y_h),  		
$$
which is of course the counterpart of the term \eqref{High-Freq-Carl-Term} and aims at penalizing the spurious high-frequency waves which may appear in the discretization process. This term is indeed really helpful when considering noisy data, as we will illustrate in the numerical experiments in Section \ref{SecNum}. But this term also guarantees that the variational problem in \eqref{VarForm-h} is coercive uniformly with respect to the discretization parameter $h>0$, as it can be deduced immediately from Theorem \ref{Thm-DiscreteCarleman}. In particular, it allows us to prove the convergence of the algorithm given afterwards.

In order to state it precisely, by analogy with \eqref{FunctionalJ-mu-g}, for $h>0$, a discrete potential $q_h$, a parameter $s >0$, and $\tilde \mu \in L^2(0,T)$, $\tilde g_h \in L^2(0,T;\R^N)$, $\tilde \nu_h \in L^2(0,T ;\R^N)$, we introduce the discrete functional 
\begin{multline}
	\label{FunctionalJ-mu-g-nu-h}
	J_{s,q_h,h}[ \tilde \mu,\tilde g_h,\tilde \nu_h](z_h ) 
	=  \\
	\frac{1}{2} \int_0^T \int_{(0,L)_h} e^{2s \varphi} |\partial_{t}^2 z_h - \Delta_h z_h + q_h z_h - \tilde g_h |^2\, dt 
	+ 
	\frac{s}{2} \int_0^T  e^{2s \varphi} \left| \frac{-z_{N,h}}{h} -  \tilde \mu(t) \right|^2 \, dt
	\\
	+ 
	\frac{s^3}{2}  \int_{-T}^T \int_{(0,L)_h} \fcar_{(|t|, x_j) \in \mathcal O} e^{2s \varphi} |z_h|^2 \, dt 
	+ 
	\frac{s h^2}{2} \int_0^T \int_{[0,L)_h} e^{2s \varphi} |\partial_t \partial_h^+ z_h - \tilde \nu_h|^2 \, dt.
\end{multline}
defined on the set $\mathcal{T}_h$. Of course, one easily checks that the solution $Y_h \in \mathcal{T}_h$ of the variational formulation in \eqref{VarForm-h} corresponds to the minimizer $Z_h$ of $J_{s,q_h,h}[ \tilde \mu,0,0]$ over $\mathcal{T}_h$ through the formula $Y_h = e^{s \varphi} Z_h$.
\\

For any mesh-size  $h>0$, we define the discrete functions $w_{0,h}, w_{1,h}$ approximating the initial data $w_0, w_1$, and the discrete functions $f_h$ and $f_{\partial,h}$ approximating the source terms $f$ and $f_\partial$. 
We  construct Algorithm~\ref{AlgoDisc} as follows.

\begin{algorithm}
\label{AlgoDisc}~~\\
	{\bf \textit{Initialization:}} $q_h^0 = 0$.
	\\
	{\bf \textit{Iteration: From $k$ to $k+1$}}
	\\
	$\bullet $ {\it  Step 1 -}
	Given $q_h^k$, we set 
	$$
		\tilde \mu_h^k(t) = \eta(\varphi(t,L)) \partial_t \left(\frac{w_{N+1,h}^k(t)-w_{N,h}^k(t)}{h} - \partial_n W[Q](t,L)\right), \quad \hbox{ on } (0,T),
	$$ 
	where $w_h^k$ denotes the solution of  
	\begin{equation}\label{Eqwk-h}
			\left\{ \begin{array}{ll}
 				\partial_t^2 w_h-\Delta_h w_h+q_h^k w_h=f_h,\qquad   & \tn{in }(0,T) \times (0,L)_h,\\
				w_{0,h}(t) = f_\partial(t,0), \,  w_{N+1,h}(t) =f_{\partial}(t,L),  & \tn{on } (0, T),\\
				w_h(0)= w_{0,h}, \quad \partial_t w_h(0)= w_{1,h},\qquad  &\tn{in } (0,L)_h,
			\end{array}\right.
	\end{equation}
	corresponding to \eqref{Eqwk}  with the potential $q^k$ and $\partial_nW[Q]$ is the measurement in \eqref{Flux}. 
	\\
	And then set 
	\begin{equation}\label{eqn:nu}
		\tilde \nu_h^k = \partial_t \partial_h^+ \left(\eta(\varphi) \partial_t w_h[q_h^k]  \right)\quad \hbox{ in } (0,T) \times (0,L)_h.
	\end{equation}
	$\bullet $ {\it Step 2 -}
	We minimize the functional $J_{s,q_h^k,h}[\tilde \mu_h^k, 0,\tilde \nu_h^k]$ defined in \eqref{FunctionalJ-mu-g-nu-h}, for some $s>0$ that will be chosen independently of $k$, 
	on the trajectories $z_h \in \mathcal{T}_h$. 
	%
	%
	Let $\widetilde Z^k_h$ be the unique minimizer of the functional $J_{s,q_h^k,h}[\tilde \mu_h^k, 0,\tilde \nu_h^k]$.
	\\
	$\bullet $ {\it  Step 3 -}
	Set
	$$
		\tilde q_h^{k+1}  =  q_h^k + \dfrac{\partial_t \widetilde Z^k_h(0)}{w_{0,h}}, \quad \tn{ in } (0,L)_h.
	$$
	\\
	$\bullet $ {\it  Step 4 -}		
	Finally, set
	$$
		q^{k+1}_h = T_m (\tilde q_h^{k+1}), \quad \textit{ with }~ T_m(q)= 
		\left\{ \begin{array}{ll} q, &\textit{ if } |q| \leq m, \\ \textit{sign}(q) m, &\textit{ if } |q| \geq m., \end{array}\right. 
	$$
	 where $m$ is the a priori bound in \eqref{A-priori-bound}.
\end{algorithm}

One can then state a convergence result provided several assumptions are satisfied, basically corresponding to \eqref{RegAssumptions}--\eqref{Positivity}--\eqref{A-priori-bound} and the consistency of our approximation schemes. Namely we assume:
\\

$\qquad(i)~~$ Assumptions \eqref{RegAssumptions}--\eqref{Positivity}--\eqref{A-priori-bound} and \eqref{Time-Condition} are satisfied.
\\

$\qquad(ii)~$ There exists $\alpha>0$ independent of $h$ such that for all $h>0$, 
\begin{equation}
	\label{Positivity-h}
	\inf_{(0,L)_h} |w_{0,h}| \geq \alpha.	
\end{equation}

$\qquad(iii)$ There exists a sequence of discrete potential $(Q_h)_{h>0}$, each $Q_h$ being defined on $(0,L_h)$ such that: 
\begin{enumerate}
	\item For each $h>0$, $Q_h$ is bounded uniformly on $(0,L)_h$ by $m$:
		\begin{equation}
			\label{A-priori-bound-h}
			\sup_{(0,L)_h} |Q_h| \leq m. 	
		\end{equation}
	\item  The piecewise constant extensions of $Q_h$ strongly converge in $L^2(0,L)$ to $Q$ when $h \to 0$.	
		
	\item For each $h>0$, introducing $W_h[Q_h]$ the solution of 
		\begin{equation}\label{EqwQ-h}
			\left\{ \begin{array}{ll}
 				\partial_t^2 W_h-\Delta_h W_h+Q_h W_h=f_h,\qquad   & \tn{in }(0,T) \times (0,L)_h,\\
				W_{0,h}(t) = f_{\partial,h}(t,0), \,  W_{N+1,h}(t) =f_{\partial,h}(t,L),  & \tn{on } (0, T),\\
				W_h(0)= w_{0,h}, \quad \partial_t W_h(0)= w_{1,h},\qquad  &\tn{in } (0,L)_h,
			\end{array}\right.
		\end{equation}
		we get
		\begin{equation}
			\label{RegAssumptions-h}
			\sup_{h >0} \int_0^T \left| \sup_{(0,L)_h} |\partial_t W_h[Q_h]| \right|^2 \,dt < \infty, 
		\end{equation}
		and the following consistency assumptions:
	\end{enumerate}
		\begin{equation}
			\label{Consistency-h}
			\begin{array}{l}
				\ds 
				\lim_{h \to 0}\left(
				\int_0^T \eta(\varphi(t,L))^2 \left| \frac{\partial_t W_{N+1,h}[Q_h] - \partial_t W_{N,h}[Q_h]}{h} -  \partial_t \partial_n W[Q](t,L) \right|^2 \, dt
				\right) 
				= 0, 
				\smallskip\\
				\ds 
				\lim_{h \to 0}\left(
				\int_0^T \int_{[0,L)_h} |h \partial_h^+ \partial_t (\eta(\varphi) \partial_t W_{h}[Q_h])|^2 \, dt
				\right) 
				 = 0.
			\end{array}
		\end{equation}
These are natural assumptions regarding the inverse problem at hand. They have been widely discussed in \cite[Section 4]{BaudouinErvedoza11} and \cite[Section 4]{Baudouin-Ervedoza-Osses}. These two works give sufficient conditions for the existence of a sequence of discrete potential $Q_h$ satisfying \eqref{A-priori-bound-h}--\eqref{RegAssumptions-h}--\eqref{Consistency-h}. They also proved that, under some further suitable assumptions on the convergence of $f_h$, $f_{\partial,h}$, $w_{0,h}$, $w_{1,h}$, a sequence $Q_h$ satisfying \eqref{A-priori-bound-h} and \eqref{Consistency-h}$_{(1,2)}$ necessarily converges to the potential $Q$ in $L^2(0,L)$ (after having been extended as piecewise constant functions in a natural way).

We get the following result:
\begin{theorem}
	\label{Thm-Convergence-h}
	Under assumptions (i)-(ii)-(iii) above, Algorithm \ref{AlgoDisc} is well-posed for all $h>0$ small enough. Specifically, the discrete sequence $q_h^k$ satisfies for some constants $C_0, C_1 >0$  independent of $s >0$ and $h >0$,
	\begin{multline}
		\label{Conv-Iterate}
		\int_{(0,L_h)} e^{2 s \varphi} |q_h^{k+1} - Q_h|^2 
		\leq
		\frac{C_0}{\sqrt{s}} \int_{(0,L_h)} e^{2 s \varphi} |q_h^k - Q_h|^2 
		\\
		+ 
		C_1 s^{1/2} 
		\int_0^T \int_{[0,L)_h} e^{2s \varphi} |h \partial_h^+ \partial_t (\eta(\varphi) \partial_t W_{h}[Q_h])|^2 dt
		\\
		+ 
		C_1 s^{1/2} 
		\int_0^T e^{2s \varphi} \left| \frac{\partial_t W_{N+1,h}[Q_h] - \partial_t W_{N,h}[Q_h]}{h} -  \partial_t \partial_n W[Q](t,L) \right|^2  dt
		.
	\end{multline}
	In particular, for $s \geq 4 C_0^2$, we get, for all $k \in \mathbb{N}$,
	\begin{multline}
		\label{Conv-Iterate-2}
		\int_{(0,L_h)} e^{2 s \varphi} |q_h^k - Q_h|^2 
		\leq
		\frac{1}{2^k} \int_{(0,L_h)} e^{2 s \varphi} |Q_h|^2  	
		\\
		+ 
		2 C_1 s^{1/2} 
		\int_0^T \int_{[0,L)_h} e^{2s \varphi} |h \partial_h^+ \partial_t (\eta(\varphi) \partial_t W_{h}[Q_h])|^2  dt
		\\
		+ 
		2 C_1 s^{1/2} 
		\int_0^T e^{2s \varphi} \left| \frac{\partial_t W_{N+1,h}[Q_h] - \partial_t W_{N,h}[Q_h]}{h} -  \partial_t \partial_n W[Q](t,L) \right|^2 dt, 
	\end{multline}
	so that as $k \to \infty$, $q_h^k$ enters a neighborhood of $Q_h$, whose size depends on $h$ and $s$ and goes to zero as $h\to 0$ according to \eqref{Consistency-h}.
\end{theorem}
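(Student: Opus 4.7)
The plan is to transpose to the semi-discrete setting the argument proving Theorem~\ref{Thm-Algo2Converges}, substituting Theorem~\ref{Thm-DiscreteCarleman} for Corollary~\ref{Cor-Carleman-Conjugate} at every step. Applying the discrete Carleman estimate to the odd-in-time extension of elements of $\mathcal{T}_h$ shows that, for $s\in[s_0,\varepsilon_0/h]$, the quadratic term in $J_{s,q_h^k,h}[\tilde\mu_h^k,0,\tilde\nu_h^k]$ dominates a natural observation norm on $\mathcal{T}_h$ uniformly in $h$ (including the high-frequency penalization $sh^2\int e^{2s\varphi}|\partial_t\partial_h^+ z_h|^2\,dt$ coming from \eqref{High-Freq-Carl-Term}), so the functional is strictly convex and coercive. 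This yields existence of a unique minimizer $\widetilde Z_h^k$, and Algorithm~\ref{AlgoDisc} is well-defined.

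Next, set $z_h^k = \partial_t(w_h[q_h^k] - W_h[Q_h])$, which solves the discrete wave equation with potential $q_h^k$, source $(Q_h-q_h^k)\partial_t W_h[Q_h]$, and initial conditions $z_h^k(0)=0$, $\partial_t z_h^k(0)=(Q_h-q_h^k) w_{0,h}$. Define $\tilde z_h^k := \eta(\varphi) z_h^k$. A direct computation shows that $\tilde z_h^k$ is the unique minimizer of $J_{s,q_h^k,h}[\tilde\mu_h^k-\mathcal{E}_h^\mu,\ \tilde g_h^k,\ \tilde\nu_h^k-\mathcal{E}_h^\nu]$, where the source reads $\tilde g_h^k = \eta(\varphi)(Q_h-q_h^k)\partial_t W_h[Q_h] + [\partial_t^2-\Delta_h,\eta(\varphi)]\,z_h^k$, the boundary consistency error is $\mathcal{E}_h^\mu(t) = \eta(\varphi(t,L))\partial_t\bigl((W_{N+1,h}[Q_h]-W_{N,h}[Q_h])/h - \partial_n W[Q](t,L)\bigr)$, and $\mathcal{E}_h^\nu = \partial_t\partial_h^+(\eta(\varphi)\partial_t W_h[Q_h])$. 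Both $\mathcal{E}_h^\mu$ and $\mathcal{E}_h^\nu$ are independent of $k$ and appear as the additional right-hand side terms in \eqref{Conv-Iterate} through \eqref{Consistency-h}. Since $\eta(\varphi(0,\cdot))=1$ (because $\varphi(0,x)\geq d_0^2$), one has $\partial_t \tilde z_h^k(0)=(Q_h-q_h^k)\,w_{0,h}$, which is precisely the quantity Step~3 of Algorithm~\ref{AlgoDisc} is designed to recover.

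The central step is the discrete analogue of Proposition~\ref{PropDependenceG}: for two minimizers $Y^a,Y^b$ of $J_{s,q_h,h}$ differing only in their data $(\tilde\mu^a,\tilde g^a,\tilde\nu^a)$ and $(\tilde\mu^b,\tilde g^b,\tilde\nu^b)$, writing and subtracting the Euler--Lagrange identities, then applying Theorem~\ref{Thm-DiscreteCarleman} (with the $\partial_t(0)$ term included, valid because $(Y^a-Y^b)(0)=0$), yields a bound on $s^{1/2}\int_{(0,L)_h} e^{2s\varphi(0)}|\partial_t(Y^a-Y^b)(0)|^2$ by the $e^{2s\varphi}$-weighted squared norms of the data differences. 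Applied to $Y^a=\widetilde Z_h^k$, $Y^b=\tilde z_h^k$, the leading source contribution $\eta(\varphi)(Q_h-q_h^k)\partial_t W_h[Q_h]$ is controlled via \eqref{RegAssumptions-h} by $M\int_{(0,L)_h} e^{2s\varphi(0)}|q_h^k-Q_h|^2$, while the commutator term, supported where $\varphi\leq d_0^2$, is absorbed using a discrete energy estimate of the form $\norm{z_h^k}\leq M\norm{Q_h-q_h^k}_{L^2}\norm{W_h[Q_h]}_{H^1(0,T;L^\infty)}$ together with $\varphi(0,\cdot)\geq d_0^2$ to turn $e^{2sd_0^2}$ into $e^{2s\varphi(0)}$. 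Dividing by $\alpha^2$ through \eqref{Positivity-h} and noting that $T_m$ cannot increase the distance to $Q_h$ (since $|Q_h|\leq m$) gives \eqref{Conv-Iterate}. Iterating for $s\geq 4C_0^2$ so that $C_0/\sqrt{s}\leq 1/2$, and summing the resulting geometric series, gives \eqref{Conv-Iterate-2}.

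The main obstacle will be the rigorous handling of the discrete commutator $[\partial_t^2-\Delta_h,\eta(\varphi)]z_h^k$. At the continuous level it is a first-order differential operator in $z^k$, strictly localized in $\{\varphi\leq d_0^2\}$; at the discrete level, $\Delta_h$ and multiplication by $\eta(\varphi)$ no longer commute cleanly, generating $O(h)$ remainders and, via the estimate in \eqref{Why-sh-small}, an $O(sh)$ discrepancy between $\mathscr{L}_{s,q_h,h}$ and its naive counterpart $\widetilde{\mathscr{L}}_{s,q_h,h}$. These remainders have to be absorbed using the penalization term $sh^2\int e^{2s\varphi}|\partial_t\partial_h^+ z_h|^2\,dt$, which explains both the restriction $sh\leq\varepsilon_0$ imposed in Theorem~\ref{Thm-DiscreteCarleman} and the role of this term in the discrete functional \eqref{FunctionalJ-mu-g-nu-h}.
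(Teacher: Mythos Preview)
Your proposal is correct and follows essentially the same route as the paper's proof: identify $\tilde z_h^k=\eta(\varphi)z_h^k$ as the minimizer of $J_{s,q_h^k,h}[\tilde\mu_h^k-\delta_h,\tilde g_h^k,\tilde\nu_h^k-\hat\nu_h]$ (your $\mathcal E_h^\mu,\mathcal E_h^\nu$ are the paper's $\delta_h,\hat\nu_h$), subtract Euler--Lagrange identities, apply Theorem~\ref{Thm-DiscreteCarleman}, then bound $\tilde g_h^k$ exactly as in the continuous case and iterate. One small correction to your final paragraph: the paper does \emph{not} use the penalization term $sh^2\int e^{2s\varphi}|\partial_t\partial_h^+ z_h|^2\,dt$ to absorb discrete commutator remainders; it simply invokes the continuous argument verbatim (support in $\{\varphi\le d_0^2\}$ plus an energy estimate), and the $sh\le\varepsilon_0$ restriction and the penalization term enter only through the Carleman estimate itself, not through the source-term analysis.
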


\begin{proof}
	We focus on the proof of \eqref{Conv-Iterate}. As in the continuous case, it mainly consists in showing that $\widetilde{Z}_h^k$ is close to $\tilde z_h^k = \eta(\varphi) z_h^k$, where 
	$$
		z_h^k = \partial_t \left( w_h[q_h^k] - W_h[Q_h]\right).
	$$
	The main idea is to remark that $z_h^k$ satisfies
	$$
			\left\{
			\begin{array}{ll}
					\partial_{t}^2 z_h^k - \Delta_h z_h^k + q_h^k z_h^k = g_h^k, & \tn{in } (0,T) \times (0,L)_h, 
				\\
					z^k_{0,h} = z^k_{N+1,h} = 0, & \tn{on } (0,T), 
				\\
					z^k_h(0) = 0, \quad \partial_t z_h^k(0) = z_{1,h}^k, \qquad& \tn{in } (0,L)_h,
			\end{array}
			\right.	
	$$
	with 
	$$
		g^k_h =   (Q_h- q_h^k) \partial_t W_h[Q_h], \quad  
		\quad 
		z_{1,h}^k =  (Q_h- q_h^k) w_{0,h}.
	$$
	In particular, $\tilde z_h^k$ satisfies:
	\begin{equation}\label{EqExactezk-h-tilde}
			\left\{
			\begin{array}{ll}
					\partial_{t}^2 \tilde z_h^k - \Delta_h \tilde z_h^k + q_h^k \tilde z_h^k = \tilde g_h^k, & \tn{in } (0,T) \times (0,L)_h, 
				\\
					\tilde z^k_{0,h} = \tilde z^k_{N+1,h} = 0, & \tn{on } (0,T), 
				\\
					\tilde z^k_h(0) = 0, \quad \partial_t \tilde z_h^k(0) = z_{1,h}^k, \qquad& \tn{in } (0,L)_h,
			\end{array}
			\right.	
	\end{equation}
	with
	$$
		\tilde g^k_h =   \eta(\varphi) (Q_h- q_h^k) \partial_t W_h[Q_h]
		+ [\partial_t^2 - \Delta_h, \eta(\varphi)] z^k_h.
	$$
	Moreover, one has the following boundary data
	%
	\begin{equation}
		\label{Observation-k-h}
			 \frac{- \tilde z_{N,h}^k(t)}{h} 
			 = 
			  \widetilde{\mu}^k_h(t) 
			 -  \delta_h(t)
			 \quad \tn{ on } (0,T),
	\end{equation}
where
$$
\delta_h(t) = \eta(\varphi(t,L))\partial_t\left(\frac{ W_{N+1,h}[Q_h](t) -  W_{N,h}[Q_h](t)}{h} -  \partial_n W[Q](t,L)\right).
$$
	Therefore, $\tilde z^k_h$ is the minimizer of the functional $J_{s,q^k_h,h}[\widetilde \mu^k_h - \delta_h, \tilde g_h^k, \tilde \nu_h^k - \hat \nu_h] $ where $\hat \nu_h$ is given by 
	\begin{equation}
		\label{Def-Hat-nu-h}
		\hat \nu_h =  \partial_h^+ \partial_t\left(\eta(\varphi) \partial_t W_h[Q_h]\right), \hbox { in } (0,T) \times (0,L)_h.
	\end{equation}
	But by construction, $\widetilde Z^k_h$ is the minimizer of $J_{s,q^k_h,h}[\widetilde \mu^k_h, 0,\tilde \nu_h^k]$. We thus only need to compare minimizers corresponding to the other coefficients ($\delta_h$, $\tilde g_h^k$ and $\hat \nu_h$). As in the proof of Proposition \ref{PropDependenceG}, using Euler-Lagrange formulation and using the Carleman estimate \eqref{DiscreteCarleman}, one easily gets:
	\begin{multline}
		\label{Comparison-Discrete}
		s^{1/2} \int_{(0,L)_h} e^{2 s \varphi(0)} |\partial_t Z^k_h(0) - \partial_t \widetilde z^k_h(0)|^2 
		\leq
		C 
		s \int_0^T e^{2 s\varphi(t,L)} |\delta_h|^2 \, dt
		\\
		+ 
		C 
		\int_0^T \int_{(0,L)_h} e^{2 s \varphi} |\tilde g_h^k|^2 \, dt
		+ 
		C 
		s h^2 \int_0^T \int_{(0,L)_h} e^{2s \varphi} |\hat \nu_h|^2\, dt.
	\end{multline}
	Following now the proof of Theorem \ref{Thm-Algo2Converges} we can show that 
	\begin{equation*}
		\int_0^T \int_{(0,L)_h} e^{2 s \varphi} |\tilde g_h^k|^2 \, dt
		\leq 
		C \int_{(0,L)_h} e^{2 s \varphi} |q^k_h - Q_h|^2 \, dt, 
	\end{equation*}
	while, by construction, 
	\begin{eqnarray*}
		s^{1/2} 
		\int_{(0,L)_h} e^{2 s \varphi(0)} |\partial_t Z^k_h(0) - \partial_t \widetilde z^k_h(0)|^2 
		 \geq 
		s^{1/2} 
		\alpha^2 \int_{(0,L)_h} e^{2 s \varphi(0)} |\tilde q_h^{k+1} - Q_h|^2\\
		 \geq 
		s^{1/2} 
		\alpha^2 \int_{(0,L)_h} e^{2 s \varphi(0)} |q_h^{k+1} - Q_h|^2.
	\end{eqnarray*}
	We then put together the two last estimates in \eqref{Comparison-Discrete}. Recalling that $\delta_h$ and $\nu_h$ are respectively given by \eqref{Observation-k-h} and \eqref{Def-Hat-nu-h}, we immediately obtain \eqref{Conv-Iterate}.
	The proof of estimate \eqref{Conv-Iterate-2} easily follows from \eqref{Conv-Iterate}. Indeed, by recurrence, one can easily show that, if $s \geq 4 C_0^2$, for all $k \in \N$,
	\begin{multline*}
		\int_{(0,L_h)} e^{2 s \varphi} |q_h^k - Q_h|^2 
		\leq
		\frac{1}{2^k} \int_{(0,L_h)} e^{2 s \varphi} |Q_h|^2  
		\\	
		+ 
		\left(\sum_{j =0}^{k-1} \frac{1}{2^j} \right)C_1 s^{1/2} \int_0^T \int_{[0,L)_h} e^{2s \varphi} |h \partial_h^+ \partial_t (\eta(\varphi) \partial_t W_{h}[Q_h])|^2  dt
		\\
		+ 
		\left(\sum_{j =0}^{k-1} \frac{1}{2^j} \right)C_1 s^{1/2} 
		\int_0^T e^{2s \varphi} \left| \frac{\partial_t W_{N+1,h}[Q_h] - \partial_t W_{N,h}[Q_h]}{h} -  \partial_t \partial_n W[Q](t,L) \right|^2 dt, 
	\end{multline*}
	which is slightly stronger than \eqref{Conv-Iterate-2} and concludes the proof of Theorem \ref{Thm-Convergence-h}.
\end{proof}
Note that we presented the above theoretical results by restricting ourselves to the $1$d case for the time continuous and space semi-discrete approximation of the inverse problem. Though, this analysis can very likely be carried on in much more general settings, for instance higher dimensions or fully discrete approximations. Of course, the key missing point is then the counterpart of the Carleman estimate in Theorem \ref{Carlemangral}. Despite important recent efforts for developing this powerful tool in the discrete setting, see in particular \cite{KlibanovSantosa91,BoyerHubertLeRousseau10a,BoyerHubertLeRousseau10b,BoyerHubertLeRousseau11,ErvedozadeGournay11,BoyerLeRousseau14} for discrete elliptic and parabolic equations, and \cite{BaudouinErvedoza11,Baudouin-Ervedoza-Osses} for discrete wave equations, the validity of discrete Carleman estimates in the discrete settings remains mainly limited to smooth deformations of cartesian grids for the finite-difference method.
\\
We would like also to emphasize that Theorem \ref{Thm-Convergence-h} is not a proper convergence theorem, as it only says that the sequence of discrete potentials $q_h^k$ will enter a neighborhood of $Q_h$ as $k \to \infty$. The size of this neighborhood, given by 
\begin{multline*}
	2C_1 s^{1/2} 
		\int_0^T \int_{[0,L)_h} e^{2s \varphi} |h \partial_h^+ \partial_t (\eta(\varphi) \partial_t W_{h}[Q_h])|^2  dt
	\\
	+ 
	2 C_1 s^{1/2} 
		\int_0^T e^{2s \varphi} \left| \frac{\partial_t W_{N+1,h}[Q_h] - \partial_t W_{N,h}[Q_h]}{h} -  \partial_t \partial_n W[Q](t,L) \right|^2 dt, 
\end{multline*}
see \eqref{Conv-Iterate-2}, is in fact very much related to the consistency error. It is nonetheless interesting to point out that choosing $s$ large to improve the speed of convergence of the algorithm also increases the size of this neighborhood. One should keep in mind that remark, which also applies in the presence of noise.
\begin{remark}\label{rem1}	
The choice made in \eqref{eqn:nu} does not seem natural because it is not based on the difference between $w_h[q_h^k] $ and $W_h[Q_h]$, the latter being unknown. It is also important to mention that for some reasons that we still do not fully understand the numerical results given by Algorithm \ref{AlgoDisc} with this choice show numerical instabilities. Instead, we propose to replace Algorithm \ref{AlgoDisc} by 

\begin{algorithm}
\label{AlgoDisc-Bis}~~\\
	Everything as in Algorithm \ref{AlgoDisc} except: 
	\\
	{\bf Iteration:}
	\\
	$\bullet$ Step 2: We minimize the functional $J_{s,q_h^k,h}[\tilde \mu^k_h,0,0]$ defined in (49), for some $ s\geq 0$
that will be chosen independently of $k$, on the trajectories $z_h \in \mathcal{T}_h$. Let $Z_h^k$ be the unique minimizer of the functional $J_{s,q_h^k,h}[\tilde \mu^k_h,0,0]$.
\end{algorithm}

With this choice, we do not know how to prove a convergence result of the algorithm similar to Theorem \ref{Thm-Convergence-h}. 
\\
However, this choice coincides more with the insights we have on the algorithm as $\tilde z_h^k$ in \eqref{EqExactezk-h-tilde} is the minimizer of $J_{s,q_h^k,h}[\tilde \mu^k_h- \delta_h,0,\tilde \nu_h^k - \hat \nu_h]$, and if convergence occurs, $\tilde \nu_h^k - \hat \nu_h$ should be small and converge to zero.
\\
The numerical results presented in Section \ref{SecNum} will all be performed using Algorithm \ref{AlgoDisc-Bis}. As we will see, this will lead to good numerical results, in agreement with the above insights. 
\end{remark}
\subsection{Full discretization}
When implementing Algorithm \ref{AlgoDisc} numerically, one should of course consider fully discrete wave equations. We will not give all the details of this discretization process, but simply state how we implement the minimization process of the functional $J_{s,q_h,h}$. 
\\
First, we shall of course consider a fully discrete version $J_{s,q_h^k,h,\tau}$ of the functional $J_{s,q_h,h}$ in \eqref{FunctionalJ-mu-g-nu-h}, in which we have implemented a time-discretization of $J_{s,q_h,h}$ of time-step $\tau$. This implies in particular that:
\begin{itemize}

	\item The minimization space $\mathcal{T}_h$ has to be replaced by the set of time discrete functions $z_{h,\tau} \in \R^{N_t} \times \R^{N+2}$, with $N_t = \lceil T/ \tau \rceil$ and  the corresponding boundary conditions.
	
	\item The time continuous integral in \eqref{FunctionalJ-mu-g-nu-h} shall be replaced by discrete sums $\tau \sum_{t \in [0,T] \cap \tau \mathbb{Z}}$.	
	\item The wave operator should be replaced by a time-discrete version of the space semi-discrete wave operator $\partial_{tt} - \Delta_h + q_h$. We simply choose to approximate $\partial_{tt}$ by the usual $3$-points difference operator $\Delta_{\tau}$ (similar to $\Delta_h$ but applied in time now). Similarly, the operator $\partial_t$ in the last term of \eqref{FunctionalJ-mu-g-nu-h} will be replaced by the operator $\partial_{\tau}^+$ which is the approximation of $\partial_t$ computed with the subsequent time-step.
	
	\item The solution $w_h$ of \eqref{Eqwk-h} has to be computed on a fully discrete version of \eqref{Eqwk-h}. We choose to discretize using an explicit Euler method.

	\item There is no need to add a new penalization term for high-frequency spurious terms as we will impose a Courant-Friedrichs-Lax (CFL) type condition $\tau \leq h$, so that the last term in \eqref{FunctionalJ-mu-g-nu-h} already penalizes the spurious high-frequency solutions.
		
\end{itemize}

Of course, the strategies that have been presented in Section \ref{SecminJ}  to make the numerical implementation of the minimization of $J_{s,q}$ more efficient can be successfully applied to the functional $J_{s,q_h^k,h,\tau}$ as well. Namely, in the implementation of Algorithm \ref{AlgoDisc-Bis}, we will always work on the conjugated functional, i.e. the one given in the conjugated variable $y = e^{s \varphi} z$, and we will always decompose the domain using the progressive argument presented in Section \ref{Subsec-Progressive}. 

We also point out that the minimization of the quadratic functional $J_{s,q_h, h, \tau}$ obtained that way can be recast using a variational formulation similar to \eqref{VarForm}, which presents the advantage to underline the fact that we are actually solving a sparse linear system. We therefore use a Compressed Sparse Row (CSR) tool as sparse matrix storage format and solve the linear system thanks to an LU factorization. 

The iterative process on the potential is supposed to reach convergence when the following stop criterion is satisfied
\begin{multline}\label{stop}
		\ds \frac{\ds\int_{(0,L)_h} |q_h^{k+1}-q_h^k|^2}{\ds\int_{(0,L)_h} |q_h^1-q_h^0|^2} \leq \epsilon_0
		\quad \hbox{ or } \quad \\
		\frac{1}{\ds \int_{[0,T)_{\tau}} |  \partial_n W[Q](t,L)|^2 } \ds \int_{[0,T)_{\tau}} \left| (\partial_h^- w_h^k)_{N+1}(t) - \partial_n W[Q](t,L) \right|^2   \leq \epsilon_1.	
\end{multline}
for given choices of the parameters $\epsilon_0>0$ and $\epsilon_1 >0$, in which the integrals have to be interpreted in the discrete sense.

\section{Numerical results}\label{SecNum}

This section is devoted to the presentation of some numerical examples to illustrate the properties of the reconstruction algorithm and its efficiency. All simulations are executed with the software \textsc{Scilab}. The source codes are available on request. 

\subsection{Synthetic noisy data}

In this article, we work with synthetic data. To discretize the wave equations with potential \eqref{EqW}, we use a finite differences scheme in space and a $\theta$-scheme in time. The space and time steps are denoted by $h$ and $\tau$ respectively. We set  $L = (N_{x}+1) h $ and $T = N_t \tau$, and we define, for $0 \leq j \leq N_x+1$ and $0\leq n \leq N_t$, $W_j^n$ a numerical approximation of the solution $W(t^n,x_j)$ with $t^n=n\tau$ and $x_j=jh$. It is solution of the following system:
\begin{equation}\label{EqDiscr}
	\left\{\begin{array}{l}
			\dfrac{W_j^{n+1}-2W_j^n+W_j^{n-1}}{\tau^2} - \dfrac{\theta}{2} (\Delta_h W_h)^{n+1}_j -(1-\theta)(\Delta_h W_h)^n_j ~
			\\
			 \hfill{}- \dfrac{\theta}{2}  (\Delta_h W_h)^{n-1}_j + Q(x_j) W_j^n = f(t^n,x_j),
				 \\ 
			W_j^1 =  w_0(x_j) +\tau w_1(x_j)+ \dfrac{\tau^2}{2} \left((\Delta_h  w_0)(x_j) - q(x_j) w_0(x_j)+f(0,x_j)\right),   
			\\
			 W_j^0 = w_0(x_j) \hfill{}				 1\leq j \leq N_x,\vspace{0.1cm}\\
			W_{0}^n = f_\partial(t^n,0) \quad \text{and} \quad 
			W_{N_x+1}^n = f_\partial(t^n,L), 
				\hfill{}	 1\leq n\leq N_t.
				\end{array}
	\right.
\end{equation}
Then, we compute $\mathscr{M}_{\tau}$ the counterpart of the continuous measurement $\mathscr{M}$ given in \eqref{Flux} as follows:
$$
\mathscr{M}_{\tau}(t^n) = \dfrac{W^n_{N_x+1} - W^n_{N_x}}{h}, \qquad  0 \leq n \leq N_t.
$$
On the computed data, we may add a Gaussian noise:
\begin{equation}
	\mathscr{M}_{\tau}(t^n) \longleftarrow (1+\alpha \mathcal{N}(0,0.5)) \mathscr{M}_{\tau}(t^n), \quad 0 \leq n \leq N_t
\end{equation}
where $\mathcal{N}(0,0.5)$ satisfies a centered normal law with deviation $0.5$ and $\alpha$ is the level of noise. Note that the model of noise, that we chose, is a multiplicative noise. It allows to model the experimental error in the measurements.

One of the main drawbacks of the method presented in Algorithm \ref{AlgoDisc-Bis} is that we have to derive in time the observation flux. On Figure~\ref{fig:flux}, we plot the flux $\mathscr{M}$ with respect to time (on the left hand side) and of its time derivative (on the right hand side). For each of the graphs, the red line is the exact value and the black line the generated noisy data. It shows that even a small perturbation on the observations gives rise to a large perturbation on its derivative. In order to partially remedy to this problem, we regularize the data thanks to a convolution process with a Gaussian:
\begin{equation}
\label{denoising}
	\mathscr{M}(t) \longleftarrow \frac{1}{\sqrt{2\pi}} \int_0^\infty   \mathscr{M}(t-r) \exp\left(-\dfrac{r^2}{4}\right) dr.
\end{equation}
The number of iterations in this regularization process must be chosen in accordance to the {\it a priori} knowledge of the noise level. On Figure~\ref{fig:flux}, the new regularized data that we use as an entry for the algorithm is plotted in blue.

\begin{figure}[h]
	\centering
	\subfloat[Flux $\mathscr{M}(t)$]
		{\includegraphics[width=0.4\textwidth]{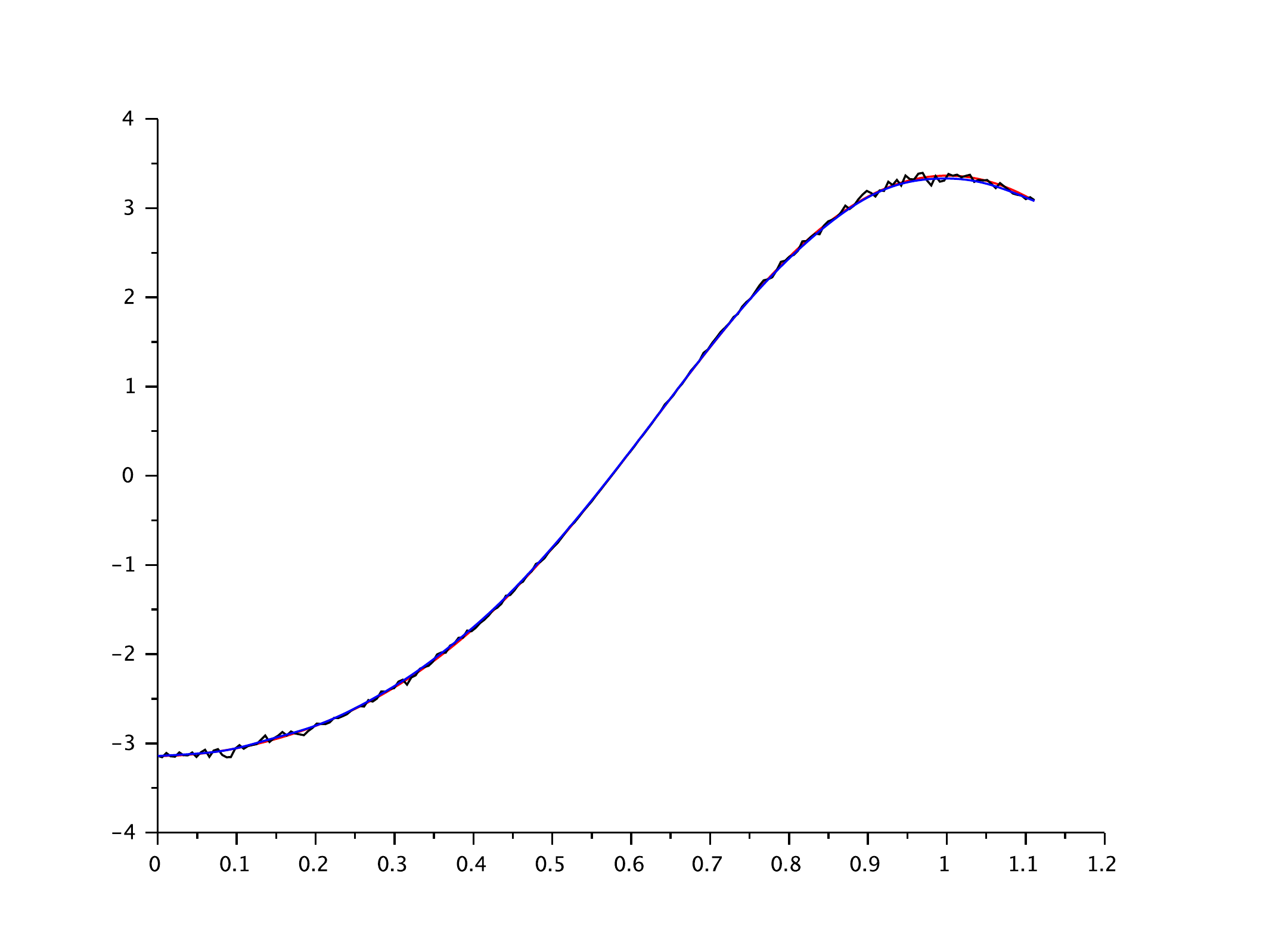}}\hspace{1cm}
	\subfloat[Time derivative of the flux $\partial_t \mathscr{M}(t)$]	
		{\includegraphics[width=0.4\textwidth]{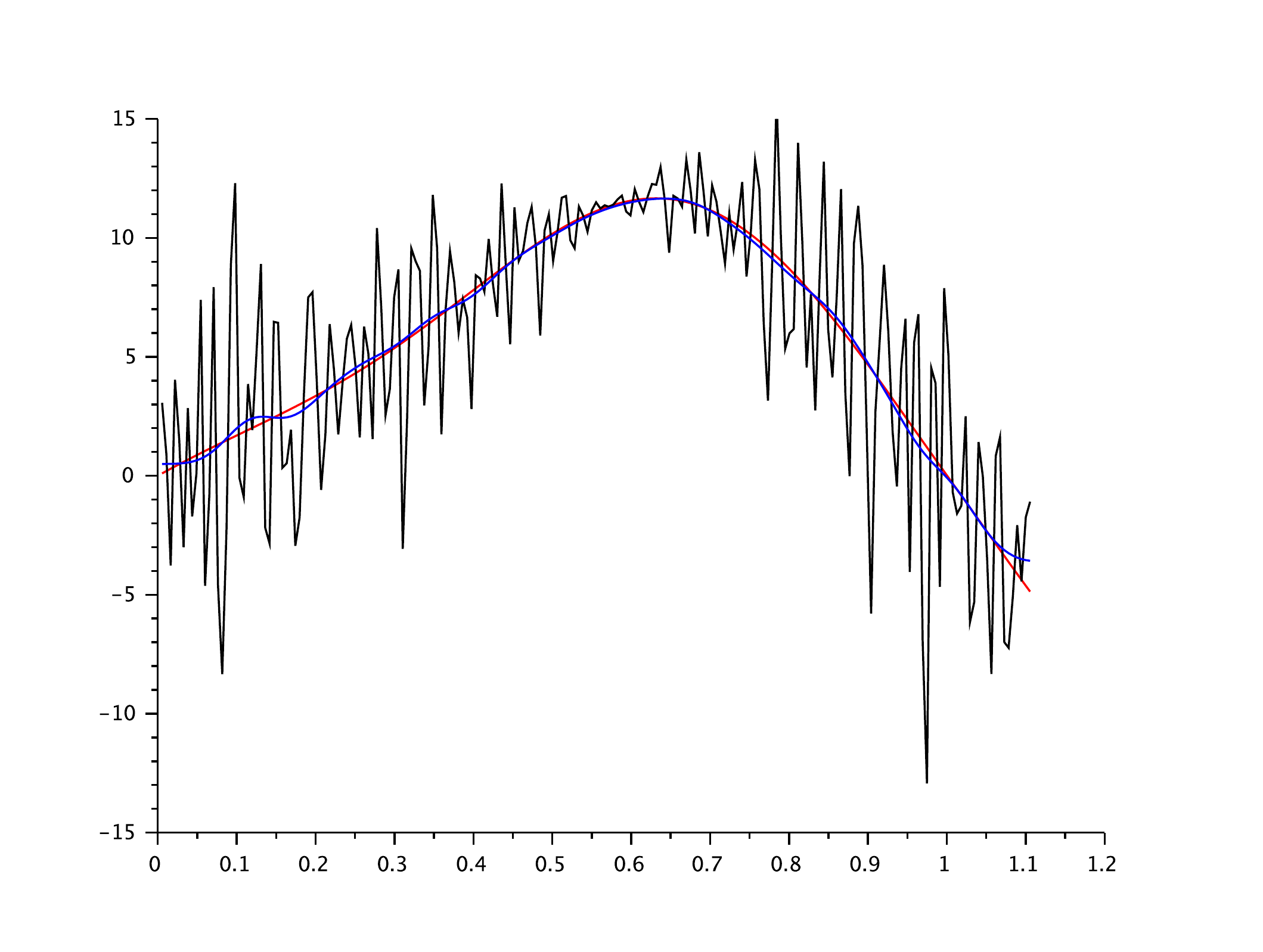}}
	\caption{The measurement $\mathscr{M}$ in the presence of $2\%$ noise. \label{fig:flux}}
\end{figure}

In order to avoid the {\it inverse crime}, we use neither the same schemes nor the same meshes for the direct and the inverse problems. Hence, we solve \eqref{EqW} thanks to an implicit scheme ($\theta = 1$) with $\tau = 0.00033$ and $h = 0.00025$ and we use an explicit scheme ($\theta = 0$) for equation \eqref{Eqwk} in Algorithm \ref{AlgoDisc-Bis}, with $\tau = 0.01$ and $ h =  \dfrac{\tau}{\textnormal{CFL}}$. Table \ref{fig:values} gathers the numerical values used for all the following examples, unless specified otherwise where appropriate. In all the figures, the exact potential that we want to recover is plotted by a red line, the numerical potential recovered by the algorithm is represented by black crosses.
\begin{table}[h]
\begin{center}
\begin{tabular}{|c|c|c|c|c|c|c|c|c|c|c|c|c|}
\hline
   $L$ & $f$ & $f_\partial$ & $w_0$ & $w_1$ & $x_0$ & $\beta$ & $T$ & $s$ & $m$ & CFL\\
   \hline
  $1$ & $0$ & $2$ & $2+\sin(\pi x)$ & $0$ &  $-0.3$ & $0.99$ & $1.3$ & $100$ & $3$ & 0.9~ \hbox{ or } ~1 \\
   \hline
\end{tabular}
\caption{Numerical values for the variables.}\label{fig:values}	
\end{center}
\end{table}

\subsection{Simulations from data without noise}

In this subsection, we present the results obtained for $CFL = 1$. For that very special choice, the explicit scheme used to discretized \eqref{EqW} is of order $2$. We observe that in this case, the additional regularization term \eqref{High-Freq-Carl-Term} in the functional does not seem to be necessary and $s$ can be chosen as large as wanted independently of the value of $h$  to achieve convergence. The successive results at each iteration of Algorithm~\ref{AlgoDisc-Bis} in the case of the reconstruction of the potential $Q(x) = \sin(2 \pi x)$ are presented in Figure~\ref{fig:iteration}. One can observe that in less than 3 iterations, the convergence criteria \eqref{stop} for $\epsilon_0 = 10^{-5}$ is met. 

\begin{figure}[!h]
	\centering
	\subfloat[$q^0$]
		{\includegraphics[width=0.25\textwidth]{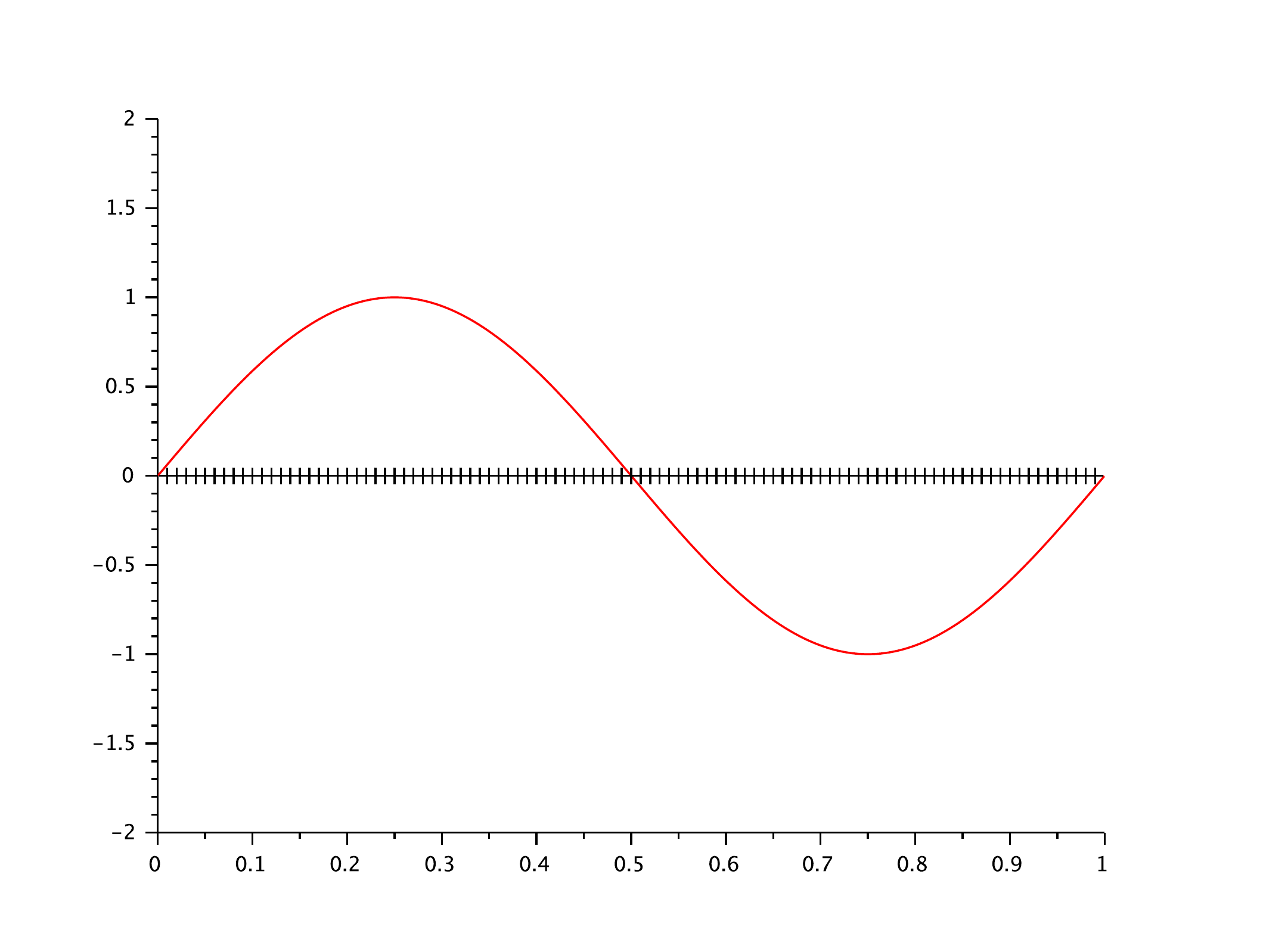}}
	\subfloat[$q^1$]	
		{\includegraphics[width=0.25\textwidth]{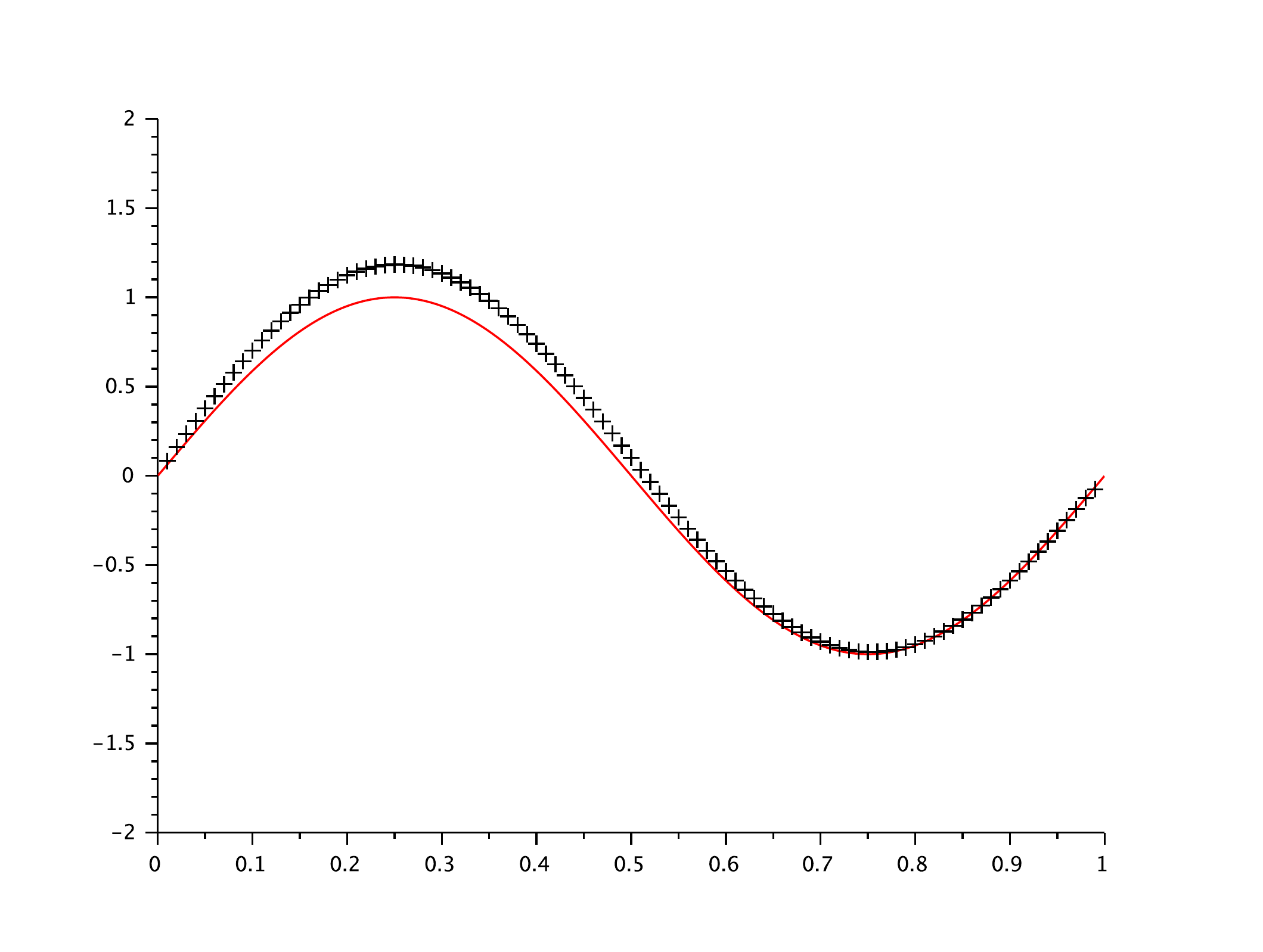}}
	\subfloat[$q^2$]
		{\includegraphics[width=0.25\textwidth]{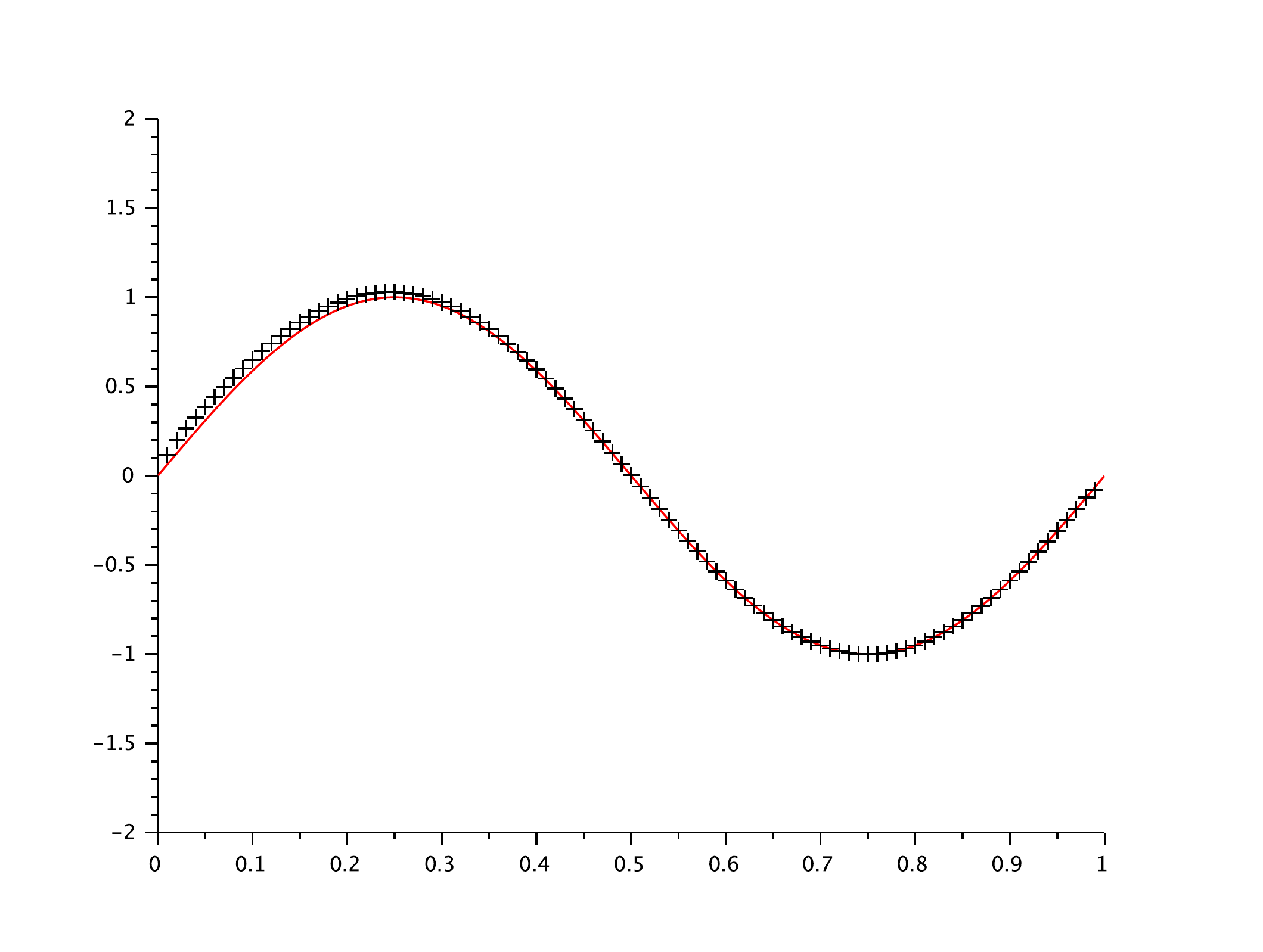}}
	\subfloat[$q^3$]
		{\includegraphics[width=0.25\textwidth]{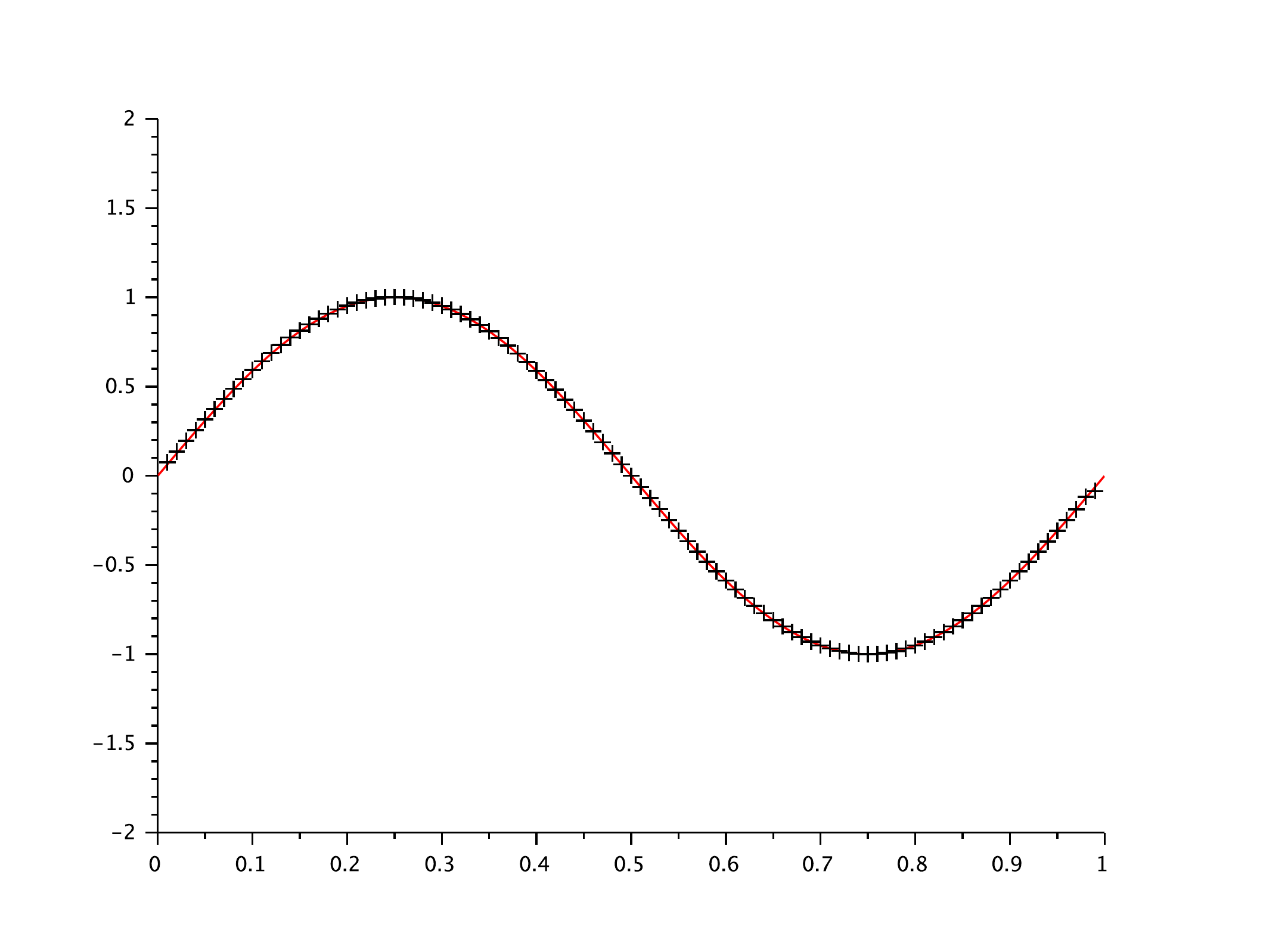}}	
	\caption{Illustration of the convergence of the algorithm for $CFL=1$ and $s=100$. \label{fig:iteration}}
\end{figure}

Using the same target potential, Figure~\ref{fig:progressive} illustrate the progressive process on the first iteration of Algorithm~\ref{AlgoDisc-Bis}. From an initial data $q_0^0 = 0$, we represent successively 
$$
q^0_j = q^0_{j-1} + \frac{\partial_t Y^0_j(0)}{e^{s\varphi(0)}w_0}, \qquad 1\leq j \leq 5,
$$
where $Y_j^0$ is the minimizer of $\tilde J_{s,q^0}[\tilde \mu_j^0]$.
\begin{figure}[!h]
	\centering
	\subfloat[$q^0_0=q^0$]		
		{\includegraphics[width=0.3\textwidth]{q0}}\hfill
	\subfloat[$q^0_1$]
		{\includegraphics[width=0.3\textwidth]{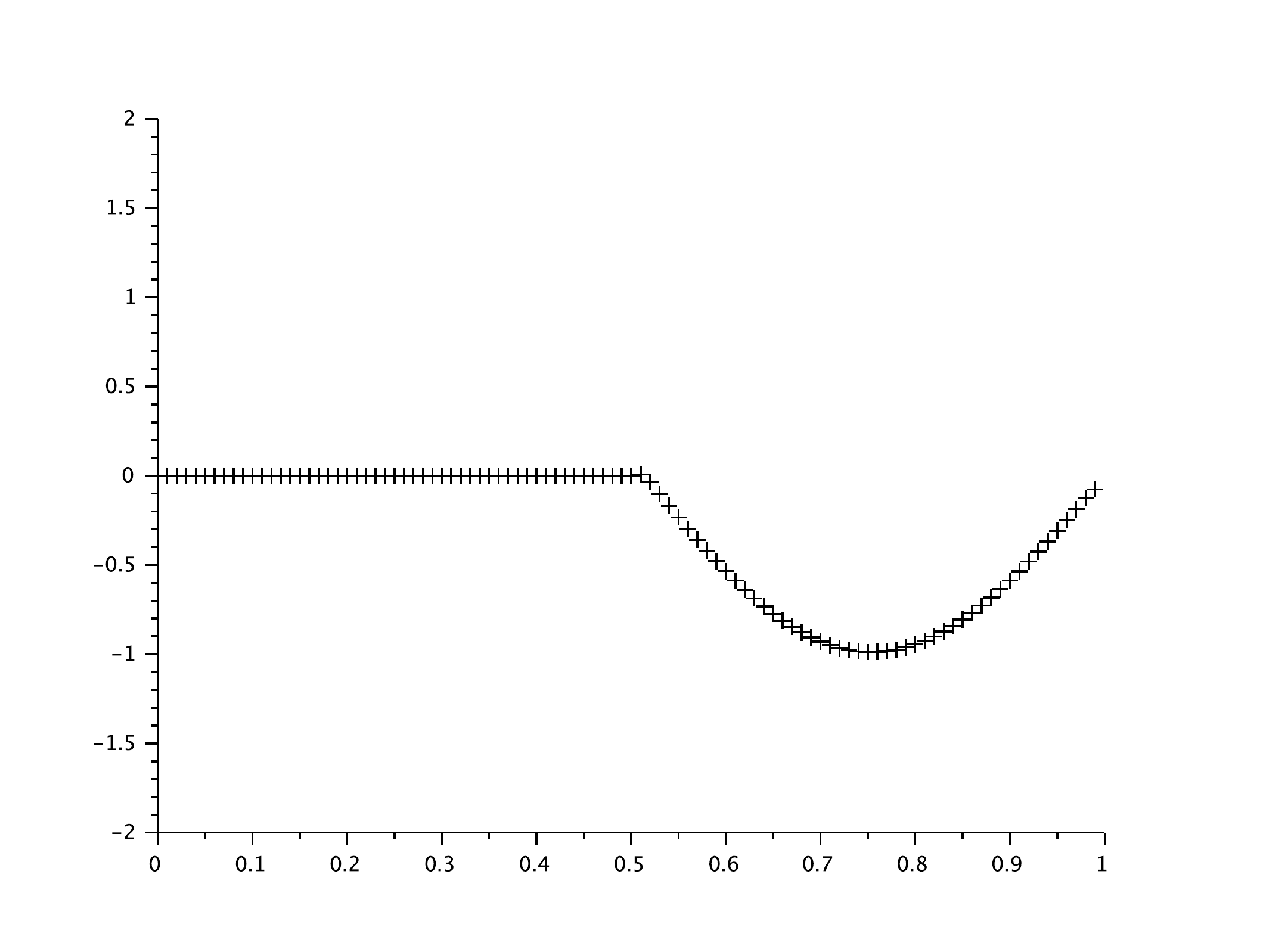}}\hfill
	\subfloat[$q^0_2$]	
		{\includegraphics[width=0.3\textwidth]{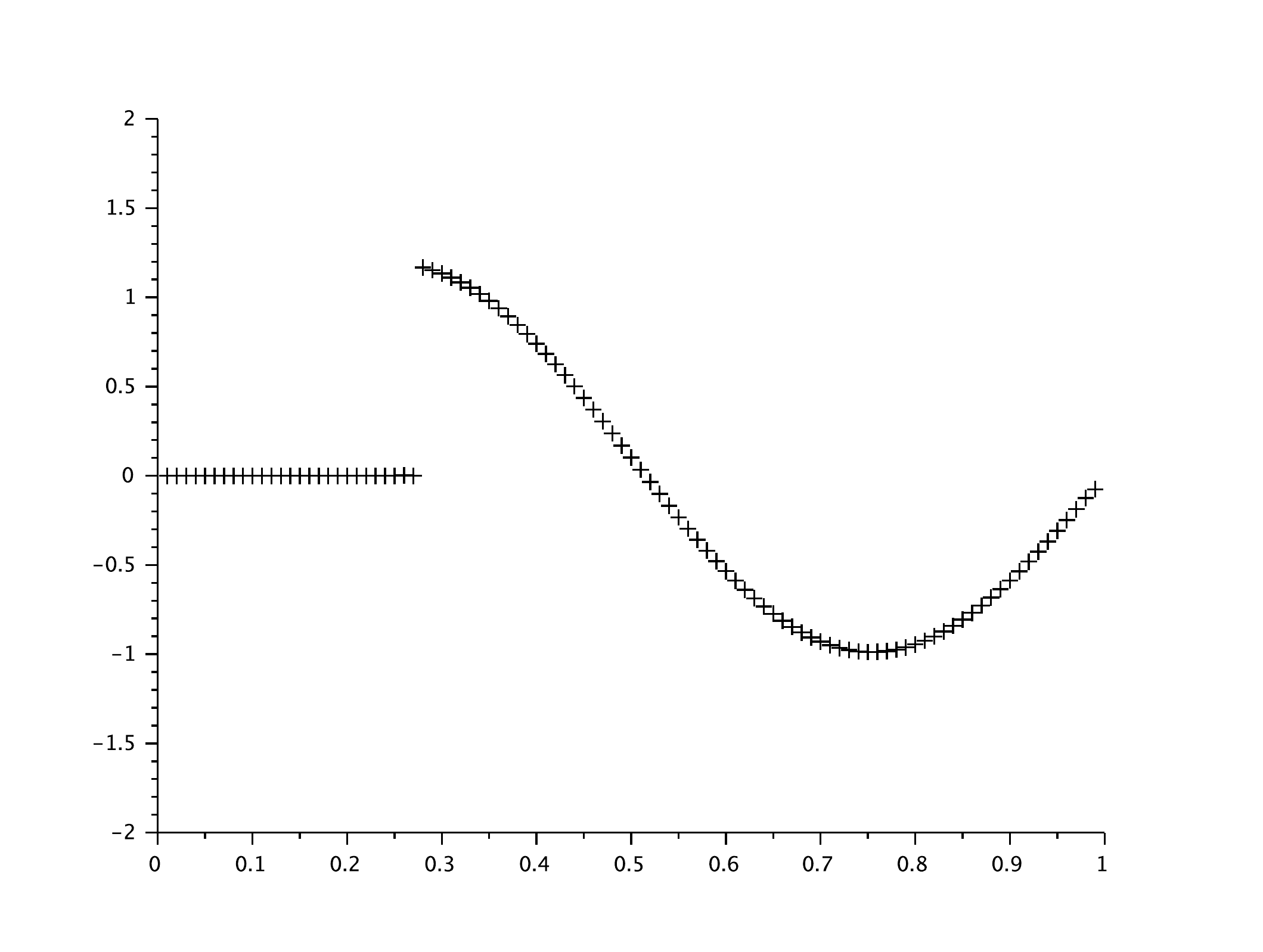}}\\
	\subfloat[$q^0_3$]
		{\includegraphics[width=0.3\textwidth]{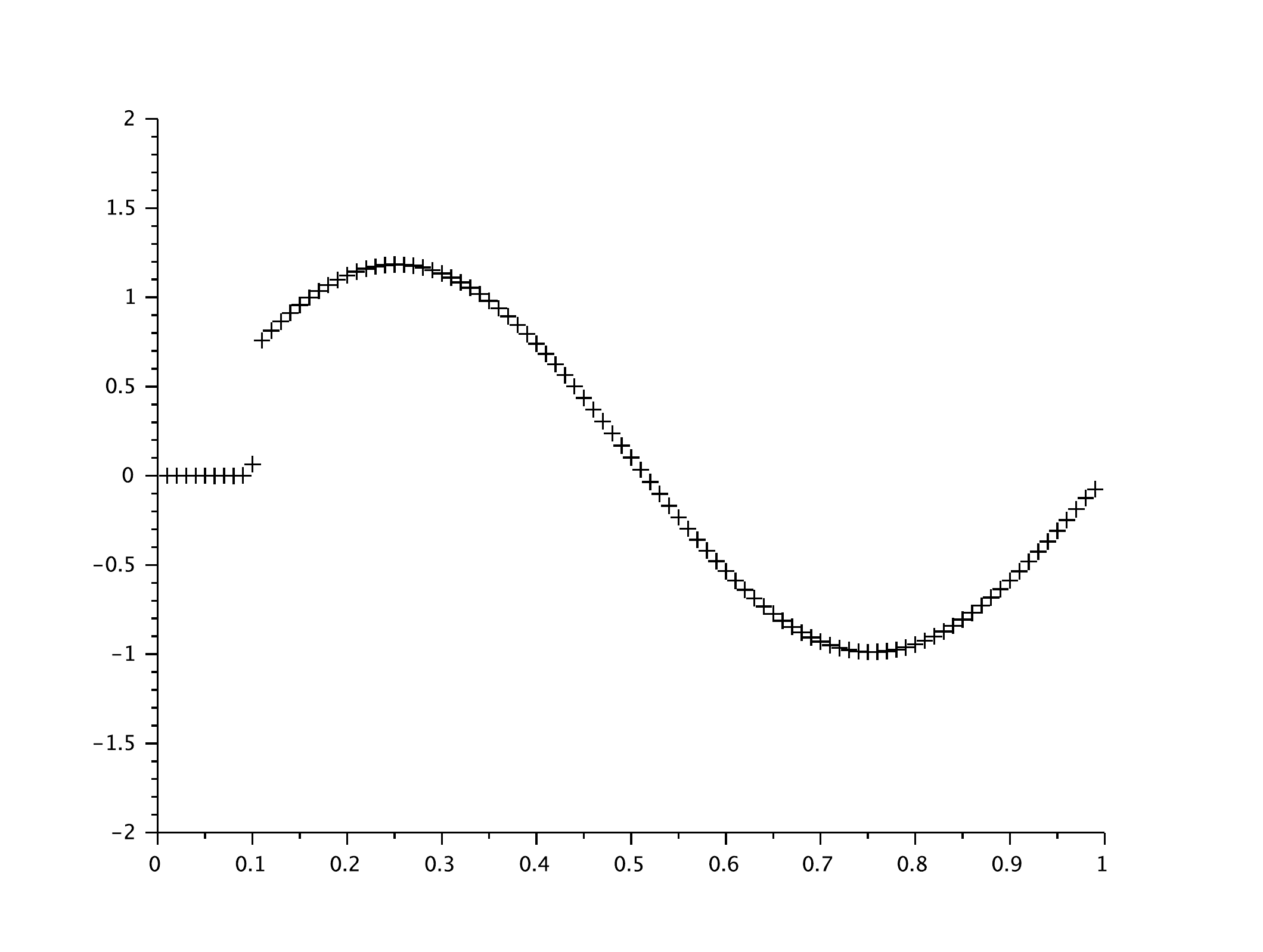}}\hfill
	\subfloat[$q^0_4$]
		{\includegraphics[width=0.3\textwidth]{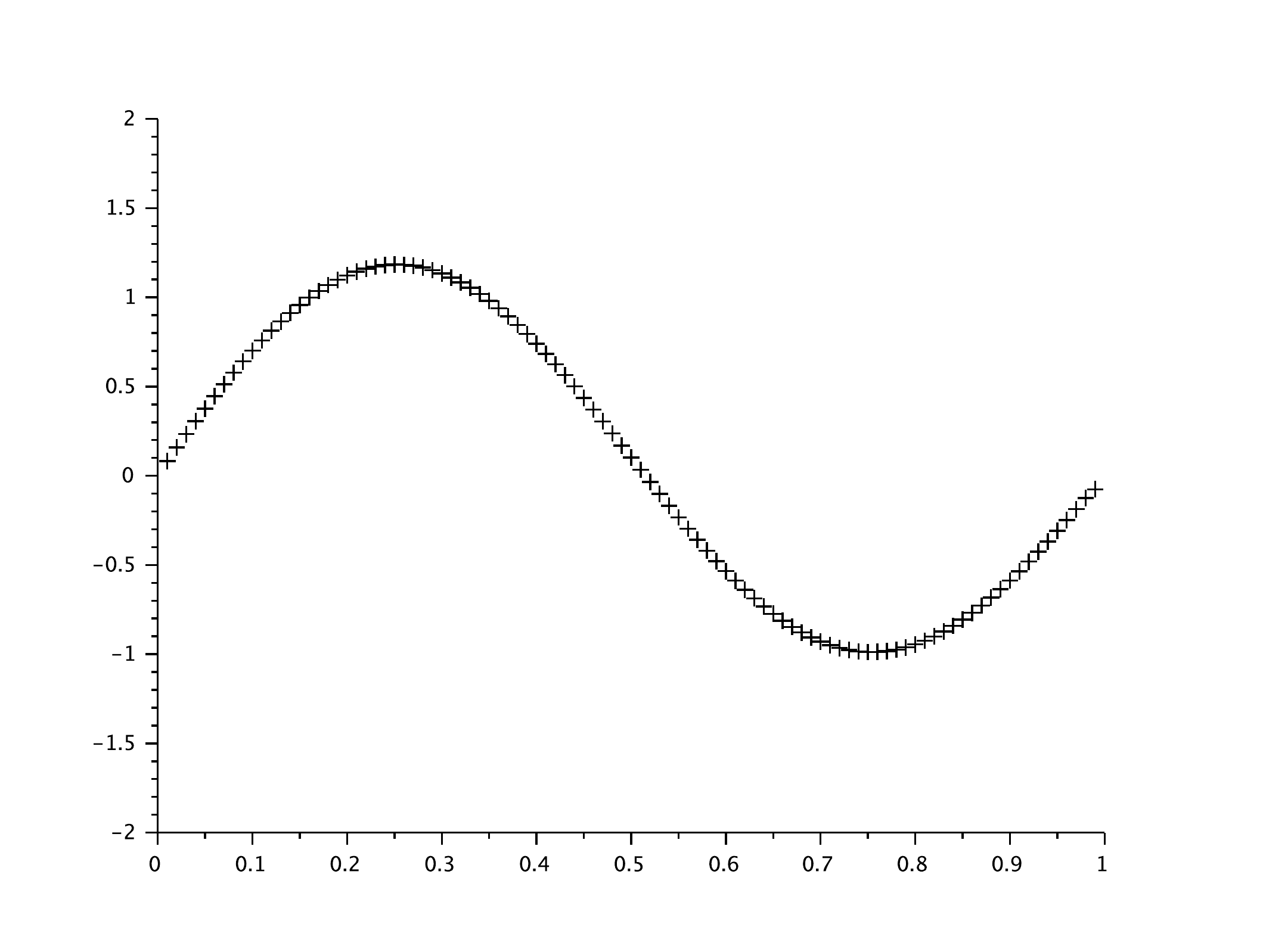}}\hfill		
	\subfloat[$q^0_5 = q^1$]
		{\includegraphics[width=0.3\textwidth]{q1}}\hfill	
	\caption{Illustration of the progressive process for $Q(x)=\sin(2 \pi x)$ for $s=100$. \label{fig:progressive}}
\end{figure}

In Figure~\ref{fig:examples}, several results of reconstruction of potentials obtained using Algorithm~\ref{AlgoDisc-Bis} in the absence of noise are given.
\begin{figure}[!h]
	\centering
	\subfloat[$Q=-x$]
		{\includegraphics[width=0.3\textwidth]{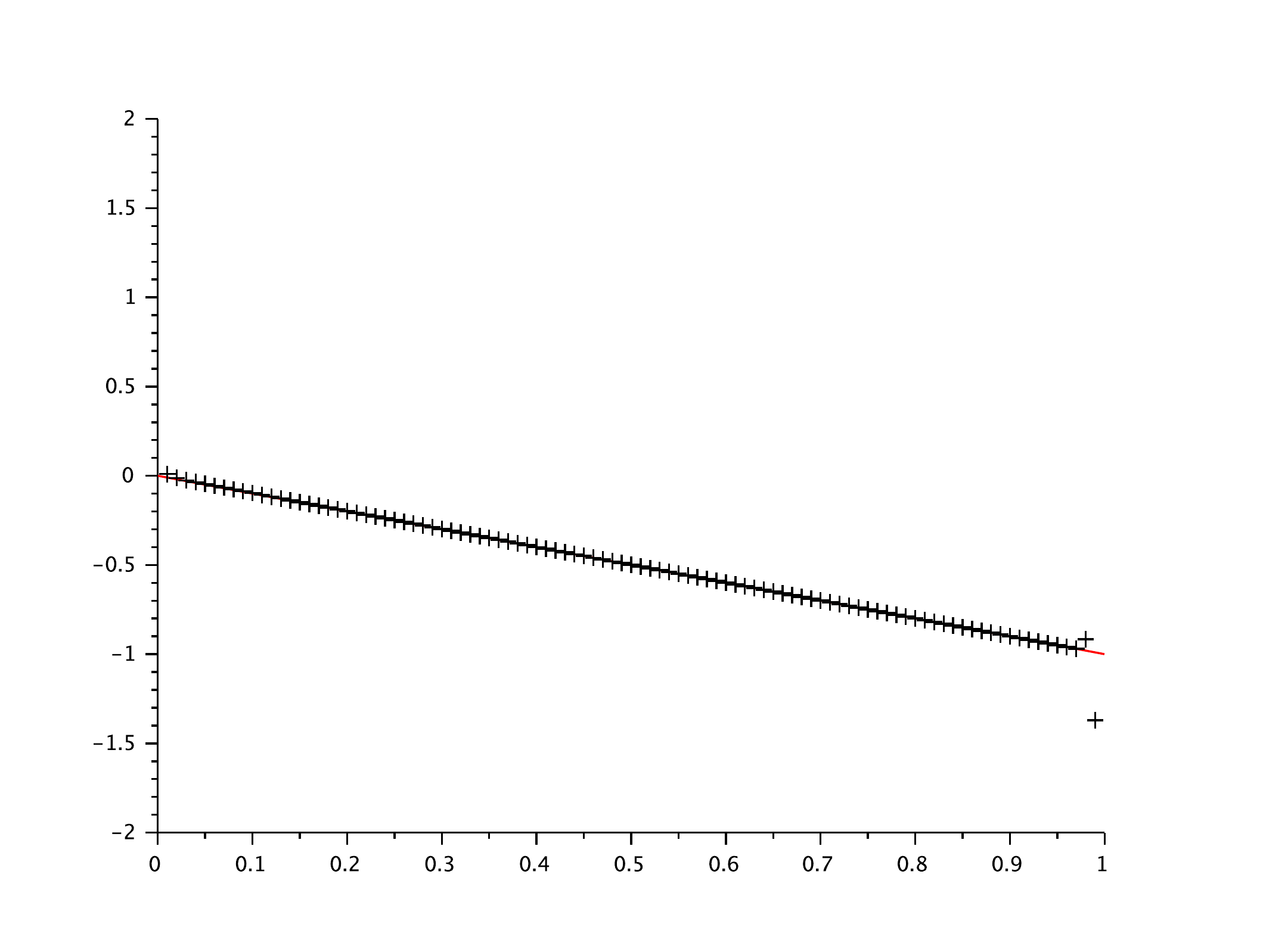}}\hfill
	\subfloat[$Q$ heaviside]
		{\includegraphics[width=0.3\textwidth]{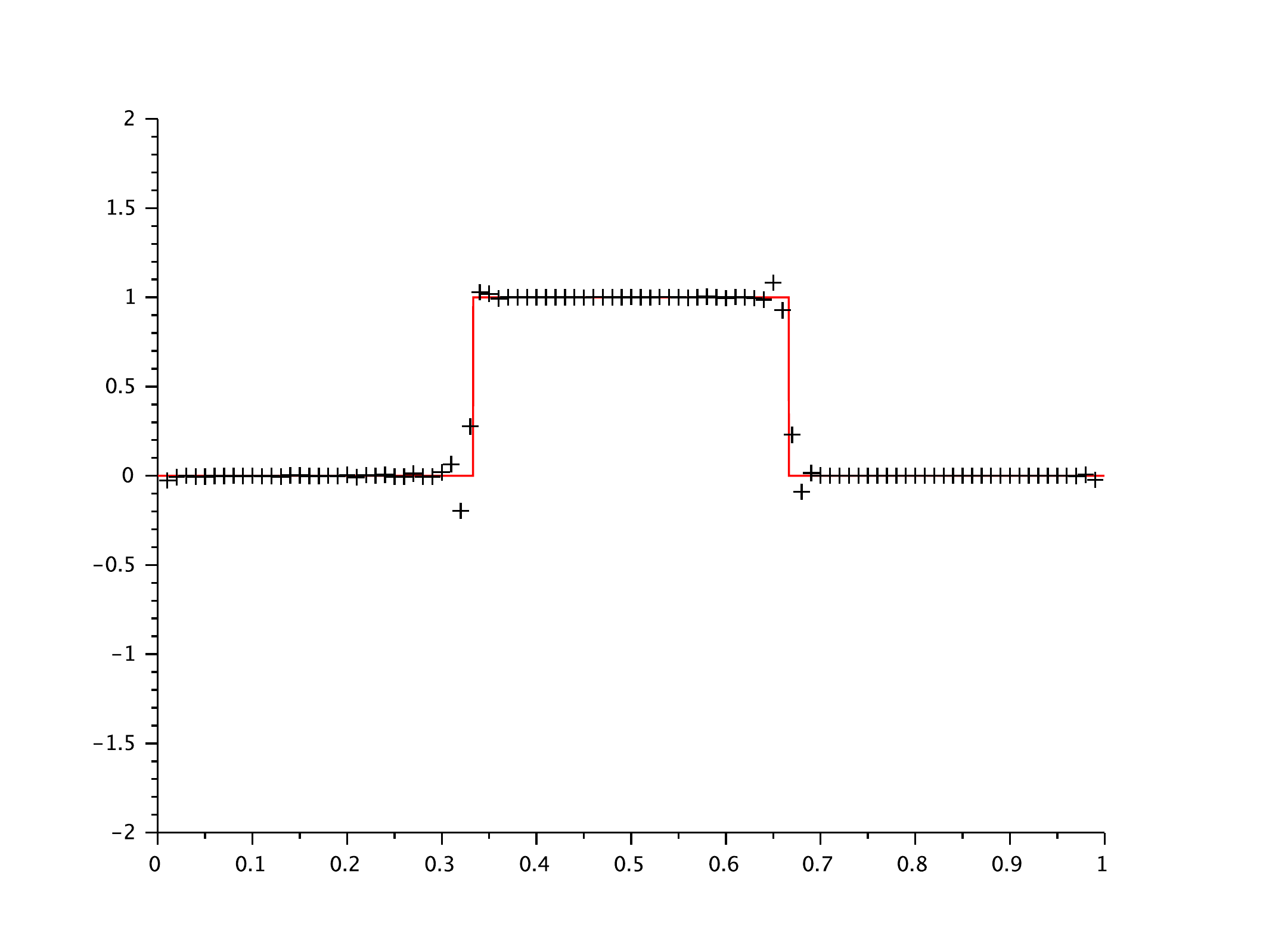}}\hfill
	\subfloat[$Q=\sin(\frac{x}{1-x})$]	
		{\includegraphics[width=0.3\textwidth]{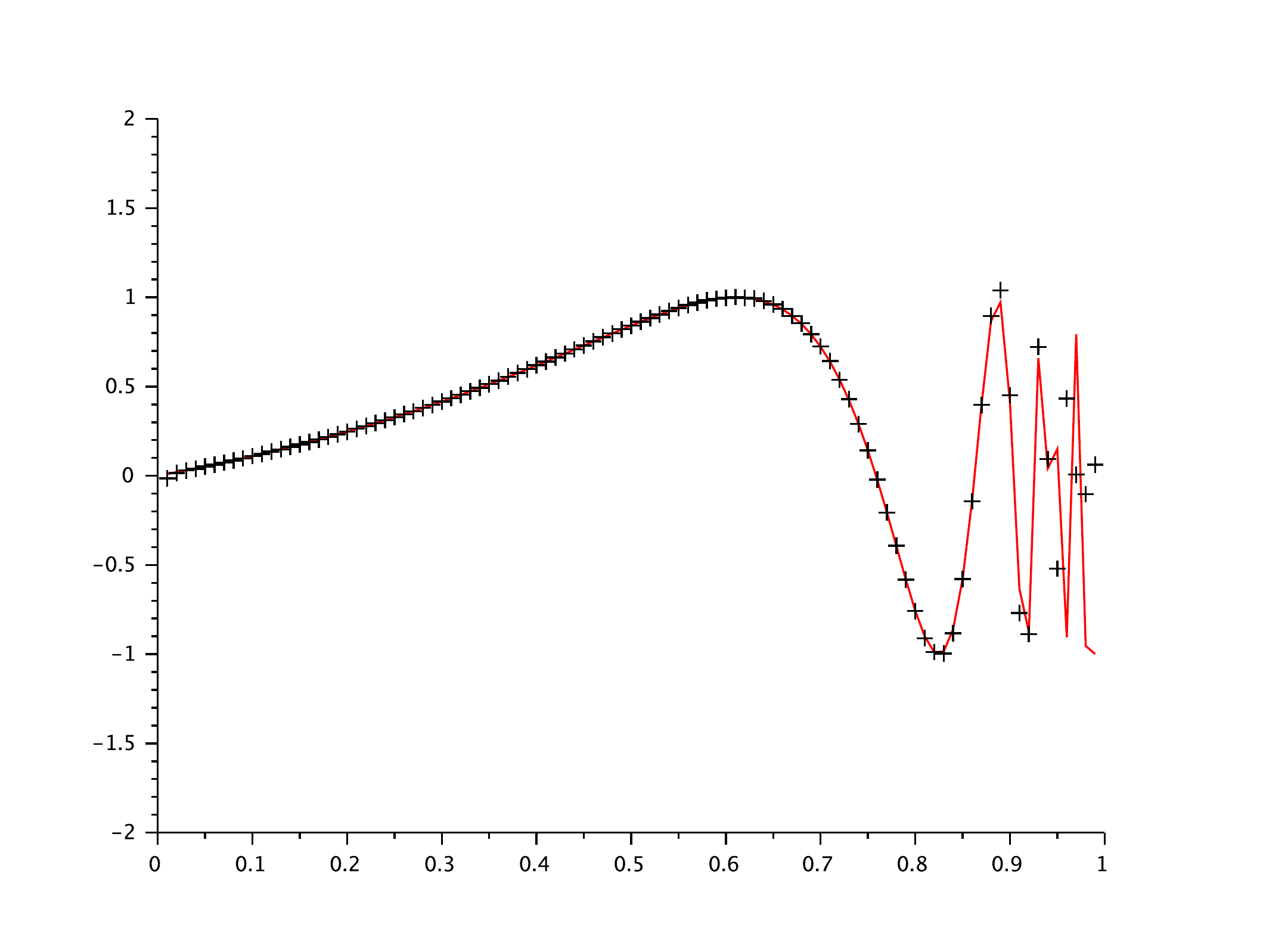}}\hfill
	\caption{Different examples of reconstruction for $CFL=1$ and $s=100$. \label{fig:examples}}
\end{figure}

We recall that in our approach, it is mandatory to know the {\it a priori} bound $m$ such that $Q \in L^\infty_{\leq m}(\mathbb{R})$. On Figure~\ref{fig:m}, we illustrate the behavior of the algorithm in the case where an  error is made on that bound. One can observe that the recovery of the potential is correct only in the zones where the potential $Q$ is effectively bounded by $m$. In this situation, the convergence of the process doesn't occur. In practice, if the retrieved potential meets the value of $m$ in several points, it is recommended to repeat the reconstruction process after choosing a greater value of $m$.
\begin{figure}[!h]
	\centering
	\subfloat[]
		{\includegraphics[width=0.3\textwidth]{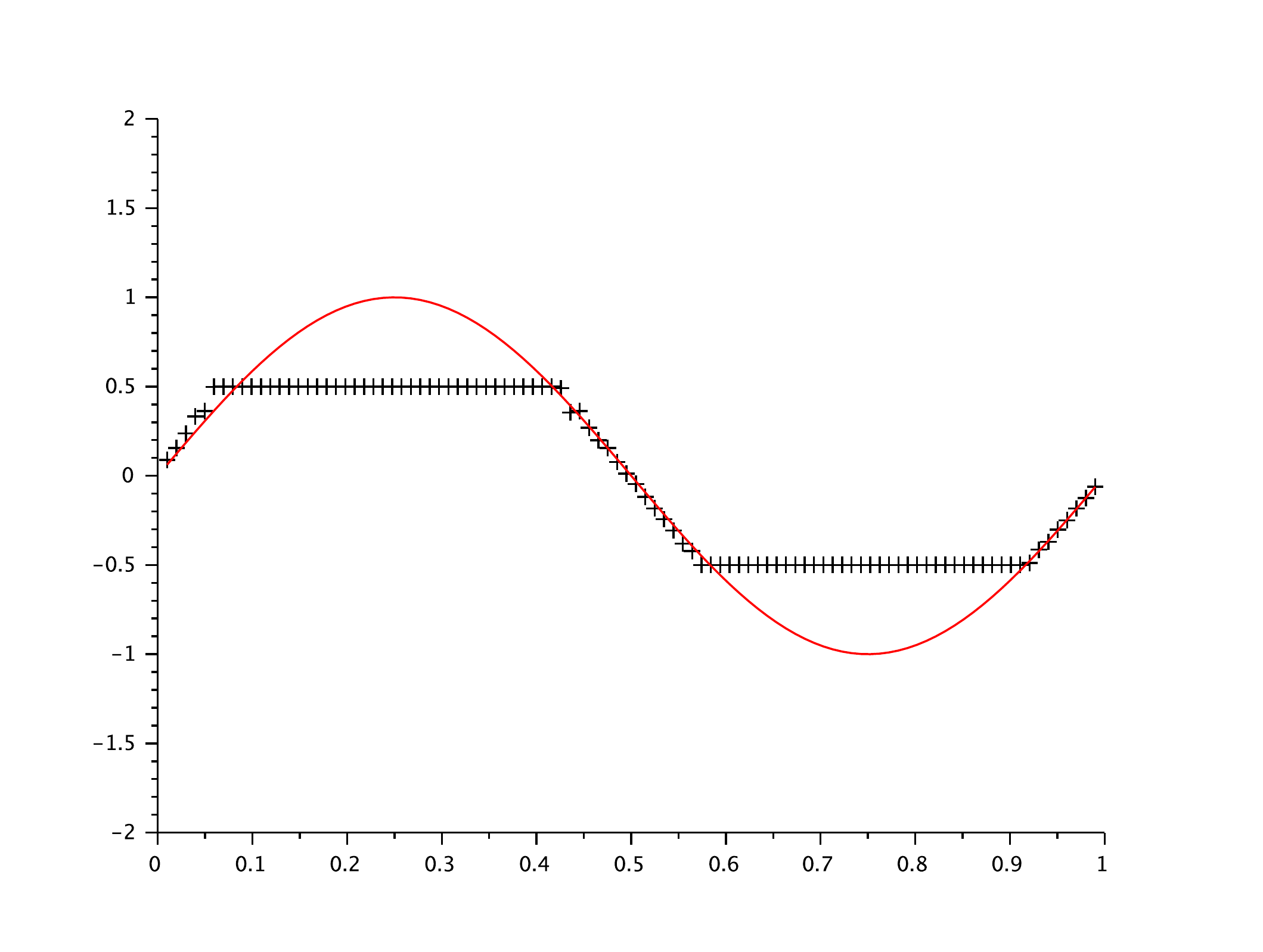}}\hspace{1cm}
	\subfloat[]	
		{\includegraphics[width=0.3\textwidth]{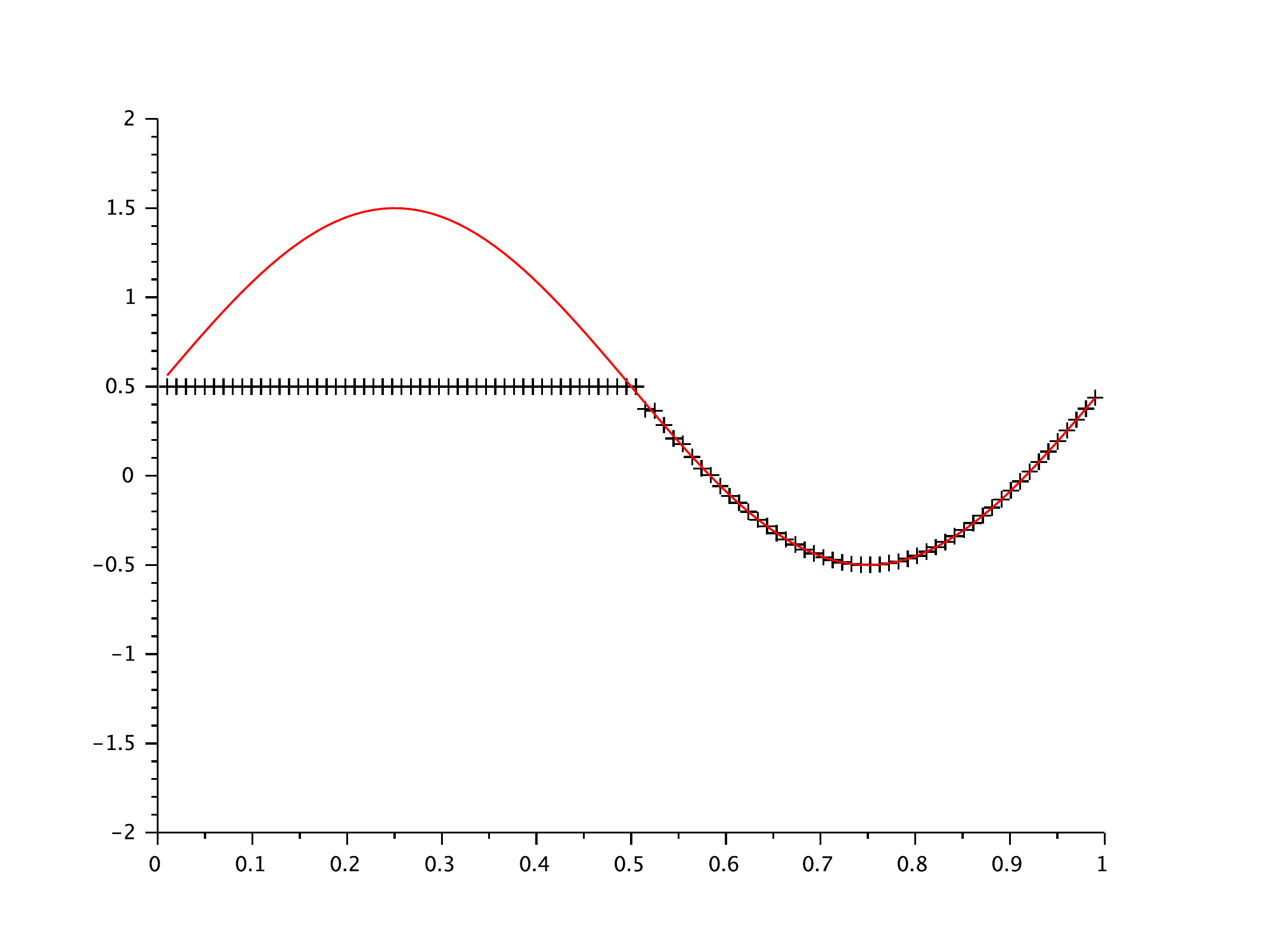}}
	\caption{ Reconstruction of exact potentials with the wrong choice of the {\it a priori} bound $m=0.5$, for $CFL=1$ and $s=100$.\label{fig:m}}
\end{figure}

\FloatBarrier

\subsection{Simulations with several levels of noise}

If we slightly modify the stability condition and take a $CFL$ condition strictly smaller that  $1$, the explicit numerical scheme used to solve \eqref{Eqwk} leads to a non negligible approximation error, acting as a noise. The presence of the additional regularization term \eqref{High-Freq-Carl-Term} in the functional is therefore necessary. In that case, if the mesh size $h$ (through $\tau$) is given, it is not possible to take $s$ as large as desired. Nevertheless, even for smaller values of $s$, Algorithm \ref{AlgoDisc-Bis} gives good results, that can be improved by refining the mesh. In Figure~\ref{fig:examples2}, several results of reconstruction of potentials obtained for $\alpha=0$, $CFL = 0.9$ and $s=10$ are presented.
\begin{figure}[!h]
	\centering
	\subfloat[$Q=\sin(2 \pi x)$]
		{\includegraphics[width=0.3\textwidth]{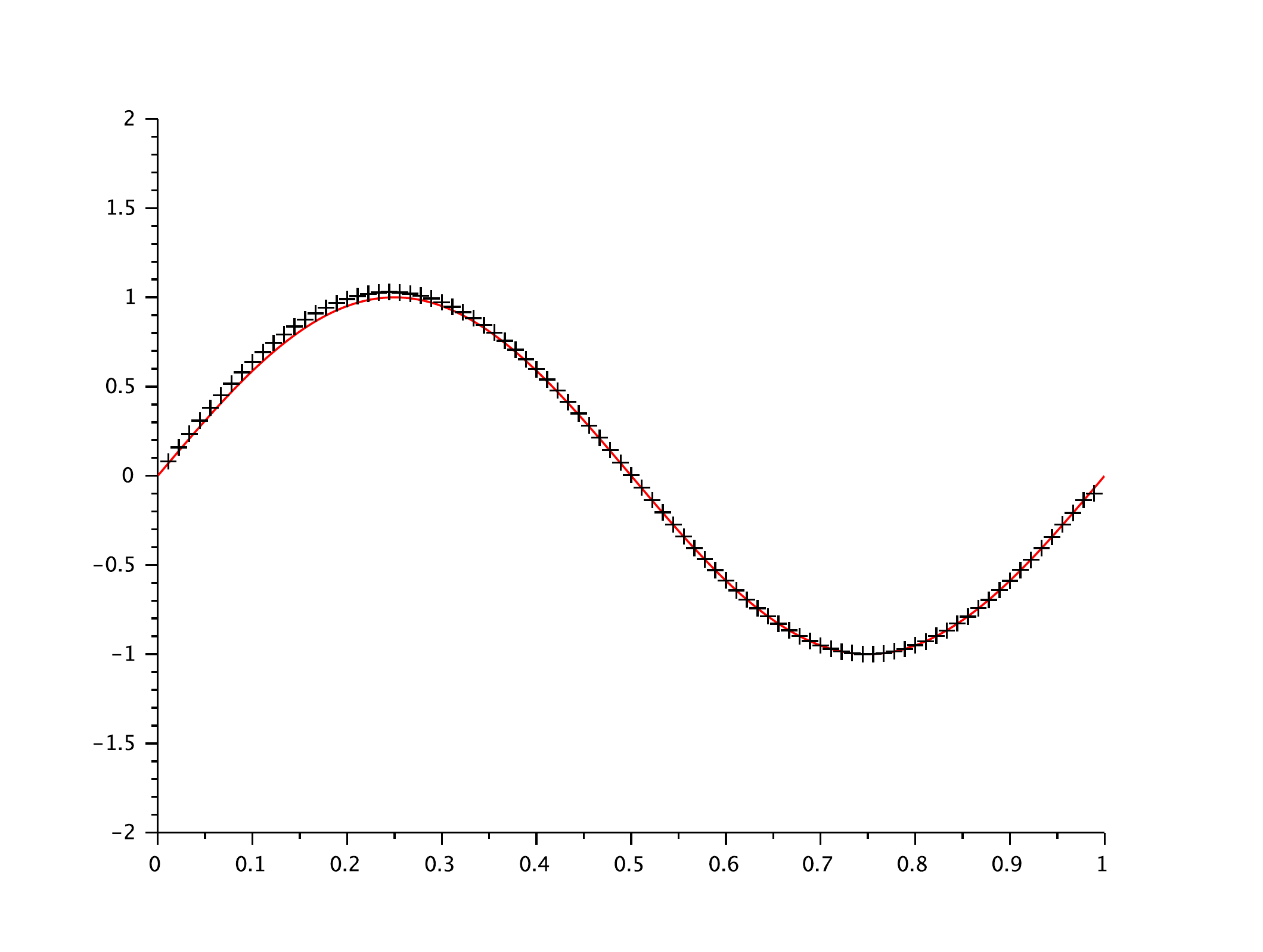}}\hfill
	\subfloat[$Q$ heaviside]
		{\includegraphics[width=0.3\textwidth]{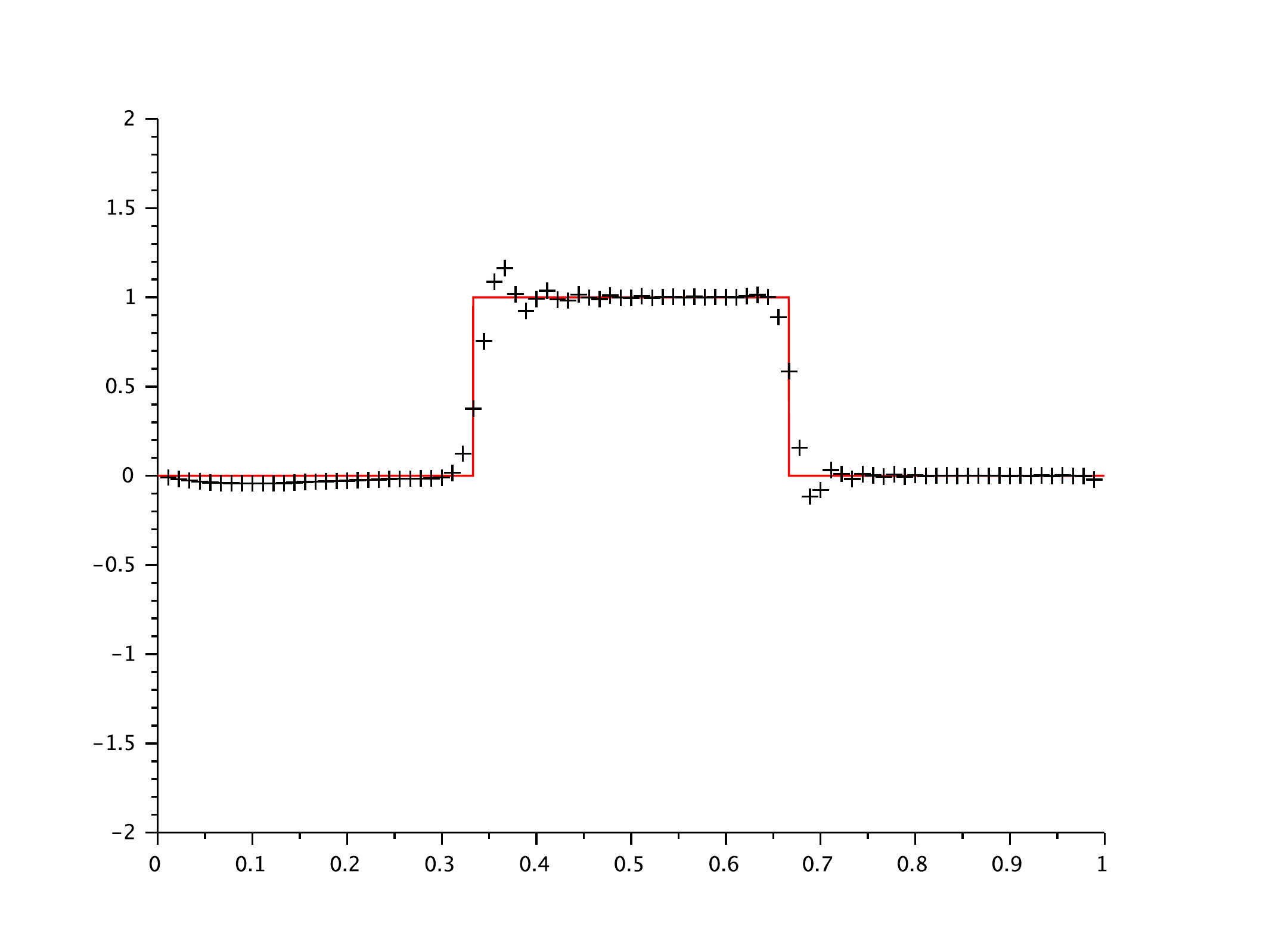}}\hfill
	\subfloat[$Q=\sin(\frac{x}{1-x})$]	
		{\includegraphics[width=0.3\textwidth]{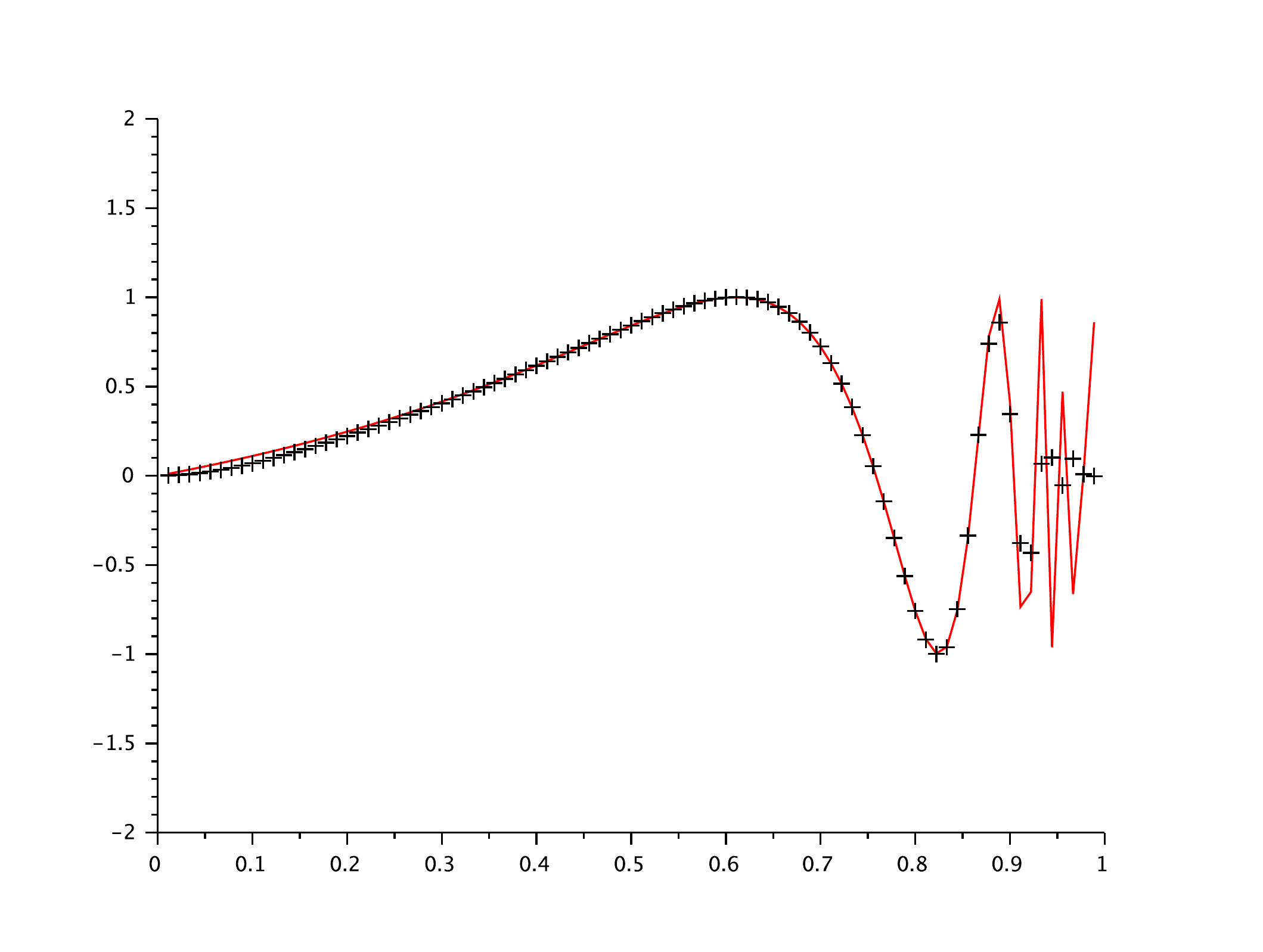}}\hfill
	\caption{Different examples of reconstruction for $CFL=0.9$ and $s=10$. \label{fig:examples2}}
\end{figure}
Figure~\ref{fig:noise} shows the results for $Q(x)=\sin(\pi x)$ with different level of noise in the measurements ($\alpha = 1\%$, $5\%$ and $10\%$). Here, we used the appropriate discretized functional constructed to deal with the discretization process.
\begin{figure}[!h]
	\centering
	\subfloat[$\alpha = 1\%$]
		{\includegraphics[width=0.3\textwidth]{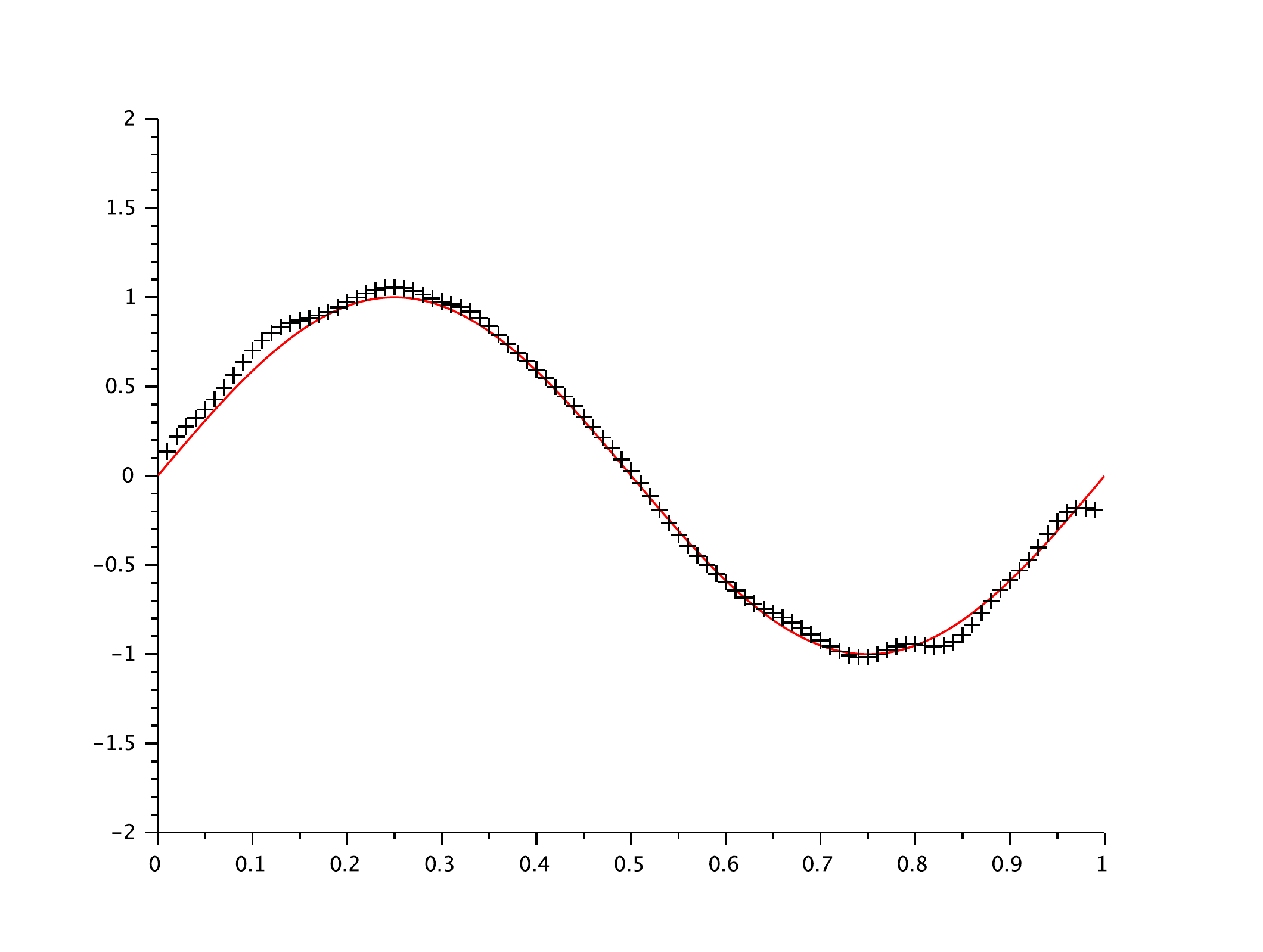}}\hfill
	\subfloat[$\alpha = 5\%$]	
		{\includegraphics[width=0.3\textwidth]{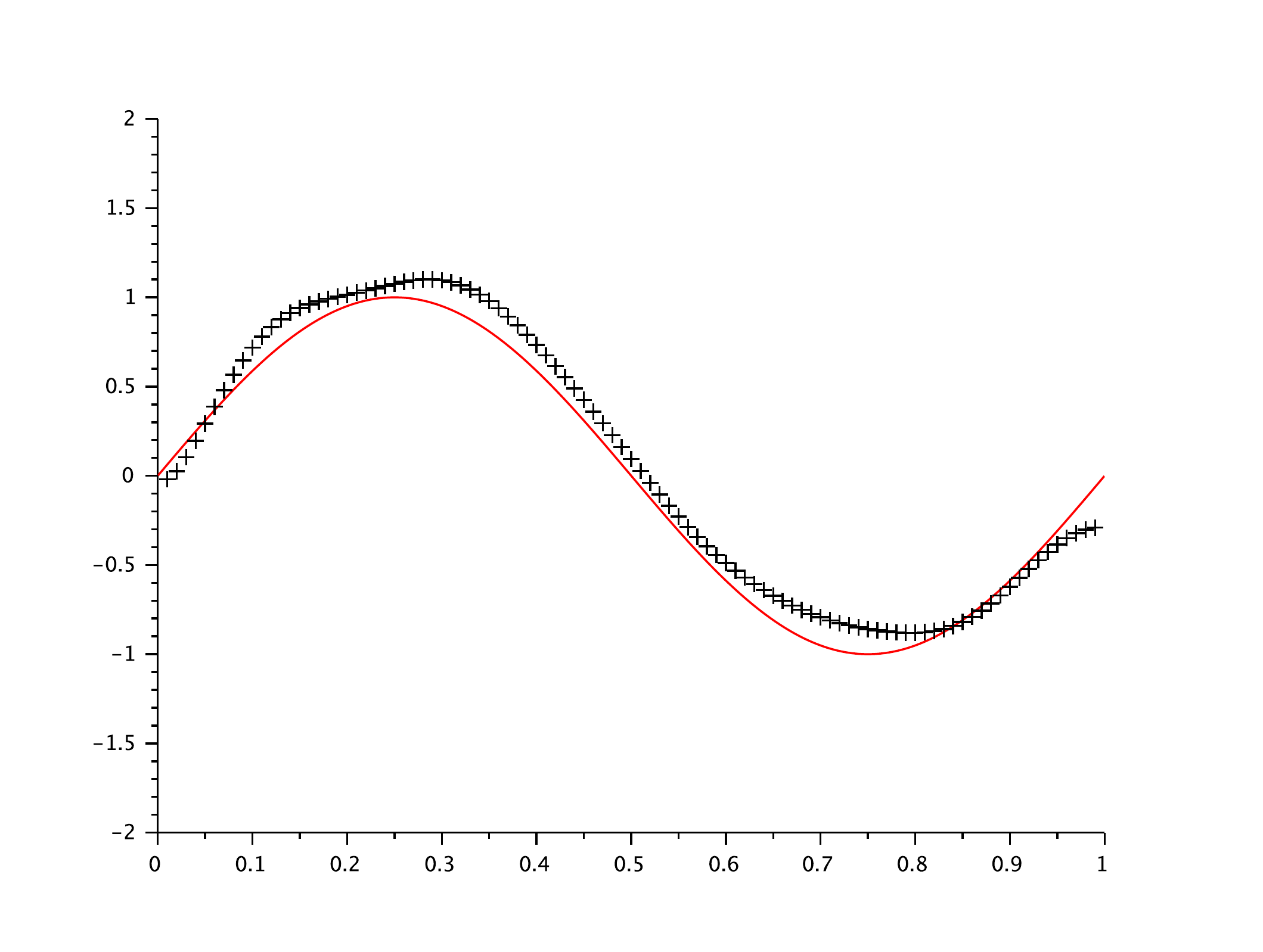}}\hfill
	\subfloat[$\alpha = 10\%$]
		{\includegraphics[width=0.3\textwidth]{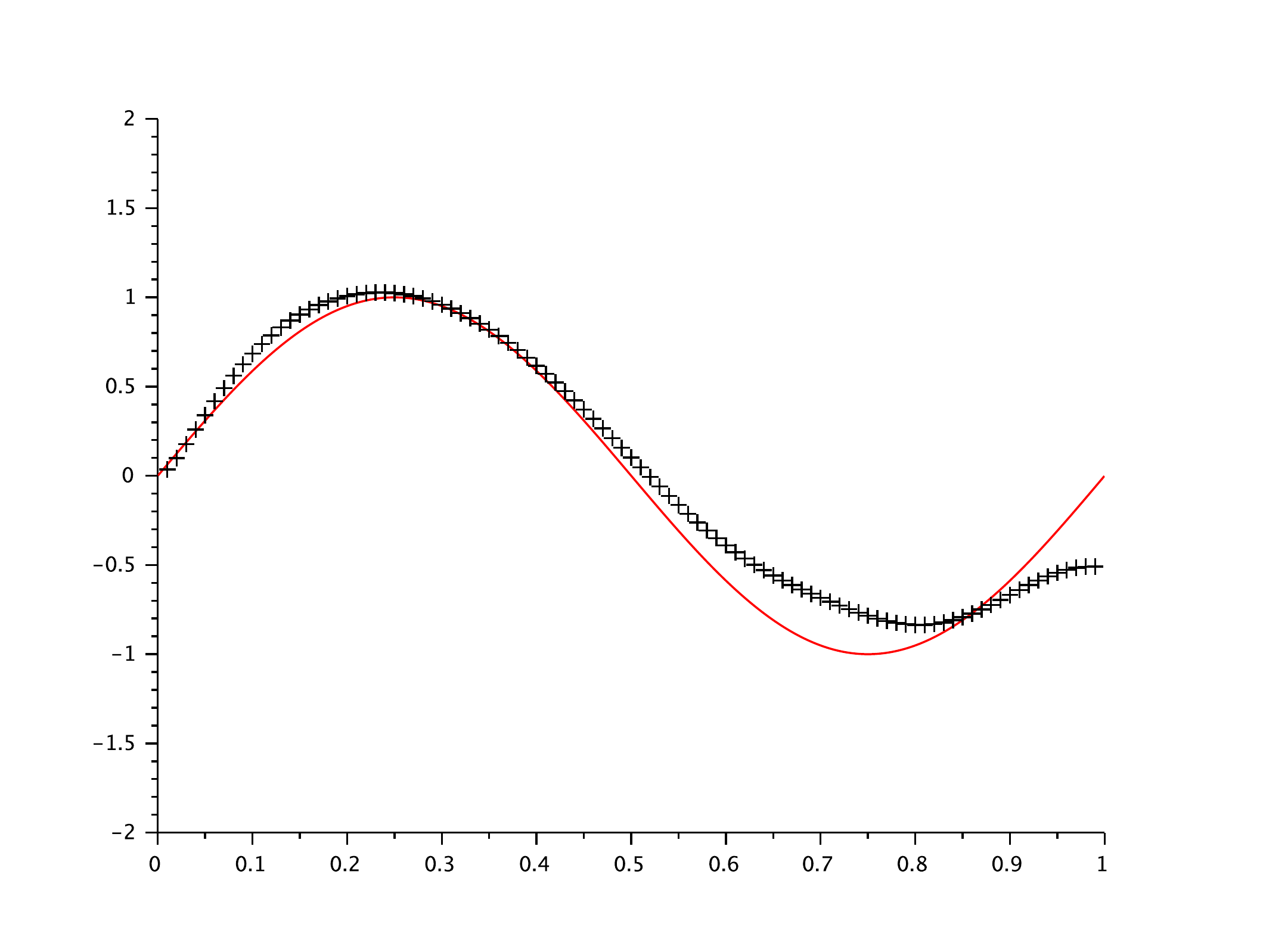}}\hfill
	\caption{Recovery of the potential $Q(x)=\sin(\pi x)$ in presence of noise in the data. The level of noise is denoted by $\alpha$. Here, $CFL=0.9$ and $s=10$. \label{fig:noise}}
\end{figure}

Eventually, Figure~\ref{fig:instabilités} shows on the left hand side, an example of result obtained when the functional is discretized without taking into account the additional terms \eqref{High-Freq-Carl-Term} requisite for its uniform coercivity with respect to the mesh size. Since the first iteration, severe oscillations occur and they amplify with the iterative process. On the right hand side, we illustrate the necessity of choosing a discretization space step small enough with respect to the value of the parameter $s$. Indeed, if the mesh size is too coarse, numerical instabilities appear.
\begin{figure}[!h]
	\centering
	\subfloat[Result for $CFL=0.9$, $s=4$ and no regularization term.]
		{\includegraphics[width=0.3\textwidth]{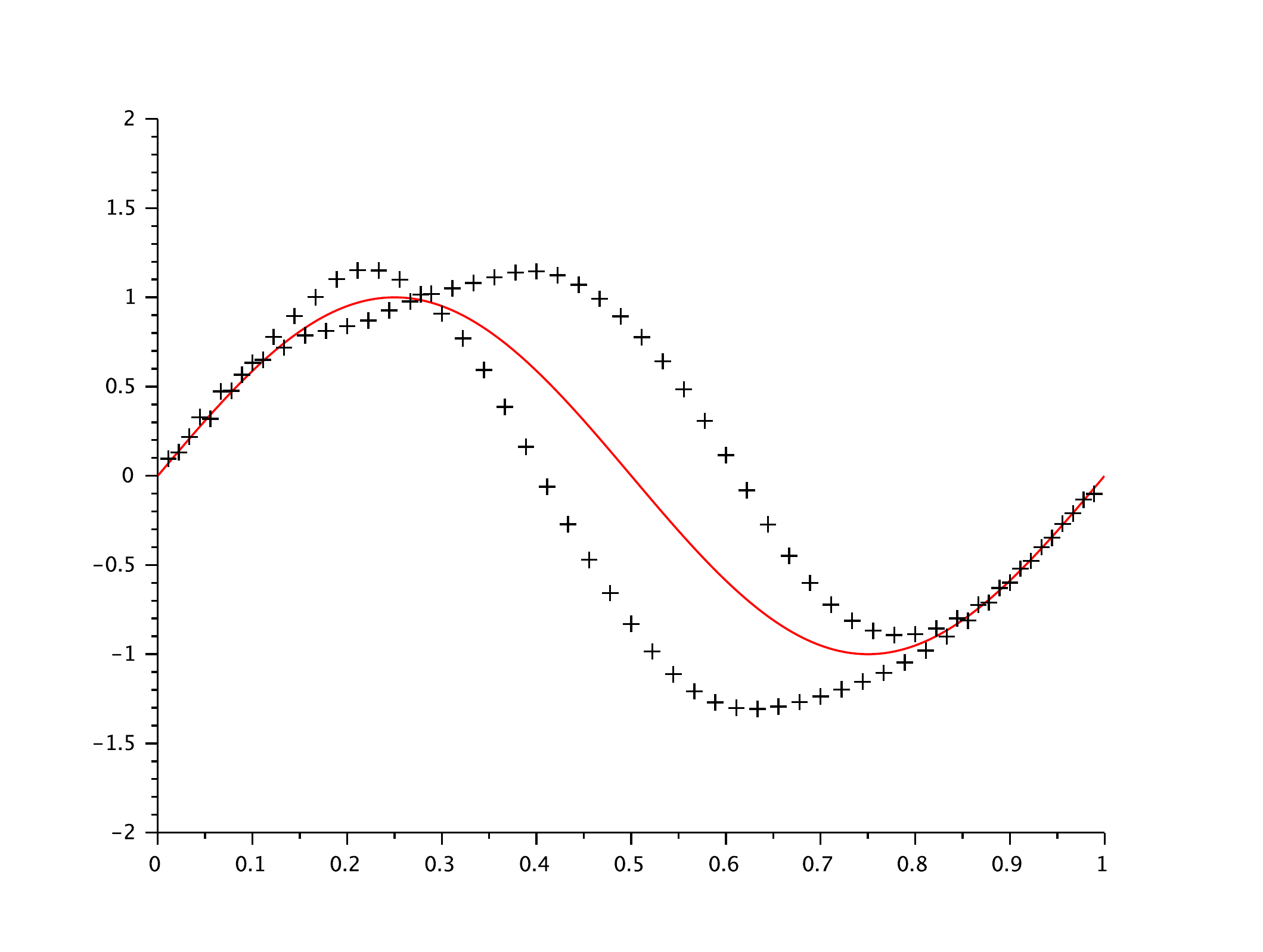}}\hspace{1cm}
	\subfloat[Result for $h=0.011$ and $s=17$, that is $sh = 0.187$.]	
		{\includegraphics[width=0.3\textwidth]{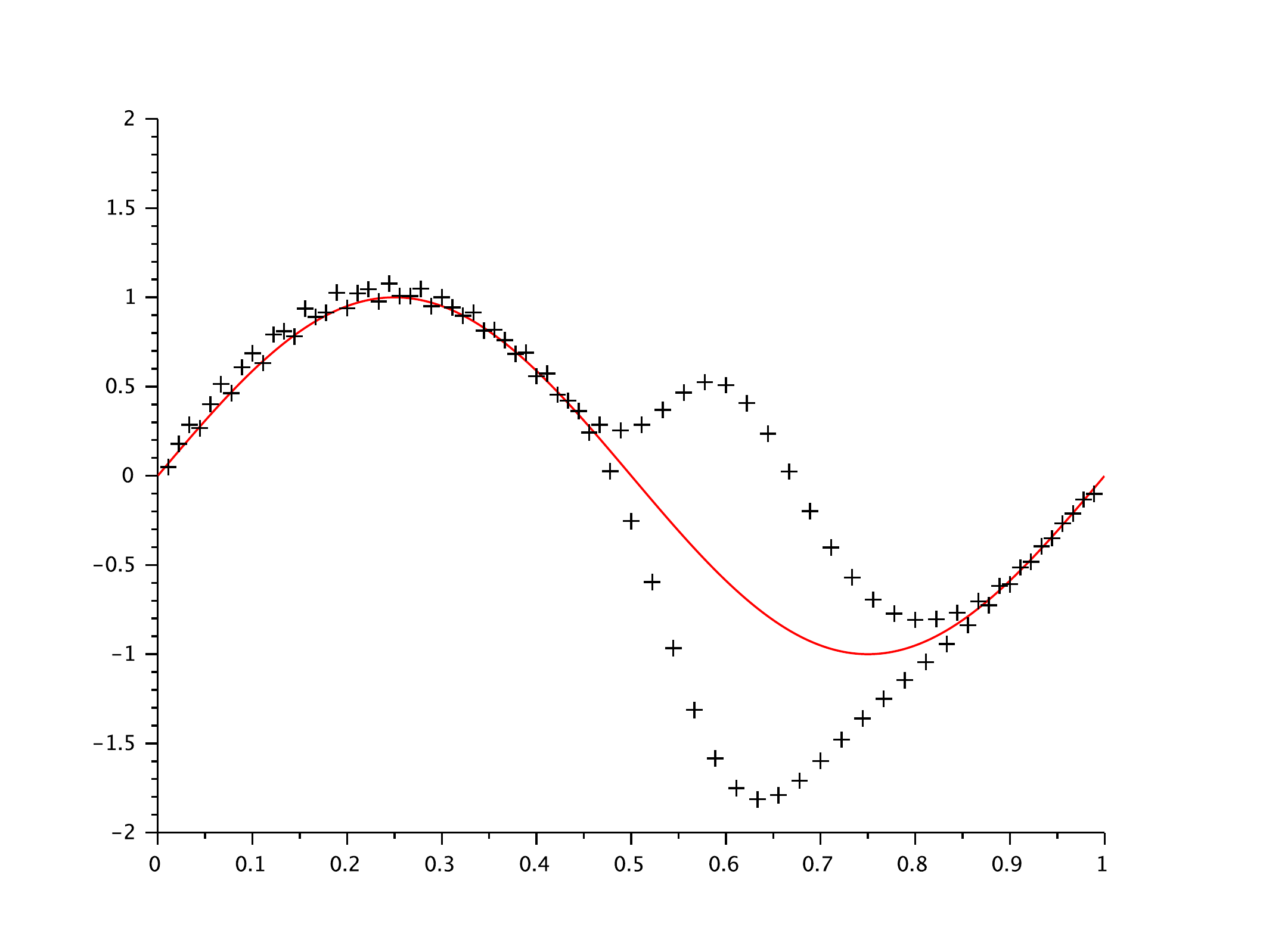}}
	\caption{Illustration of the need of the additional regularization term \eqref{High-Freq-Carl-Term} in the functional (left). Illustration of the needed condition \eqref{Why-sh-small} between $s$ and the space step $h$ (right). \label{fig:instabilités}}
\end{figure}


\subsection{Simulations for initial datum not satisfying (\ref{Positivity})}
%

So far, we presented numerical simulations in which the positivity assumption \eqref{Positivity} on $w_0$ was satisfied. In this section, we would like to briefly present what can be done in the case in which it is not satisfied. In that case, Step 3 of Algorithm \ref{AlgoDisc-Bis} can be replaced by :
\begin{equation}
     \tilde q_h^{k+1}(x_j)  =  
     \left\{
     \begin{array}{cc}
     q_h^k(x_j) + \dfrac{\partial_t \widetilde Z^k_h(0,x_j)}{w_{0}(x_j)}, &\text{ for } j \in \{1, \cdots,N\} \text{ such that } |w_0(x_j)| \geq \alpha,\\
     0, &\text{elsewhere}, \\
     \end{array}
     \right.
\end{equation}
where $\alpha >0$ is the constant appearing in \eqref{Positivity}.
As an example, let us consider 
$$
	w_0(x) = -a + x, \quad a \in (0,L),
$$
which cancels at $x = a$ in a single isolated point. If we take $\alpha = 10^{-2}$, we obtain the results given in Figure~\ref{fig:wrongw0}. Actually, the reconstruction is satisfactory outside a small neighborhood around $x=a$.

\begin{figure}[!h]
	\centering
	\subfloat[$Q=\sin(2 \pi x)$ and $a=0.5$.]
		{\includegraphics[width=0.3\textwidth]{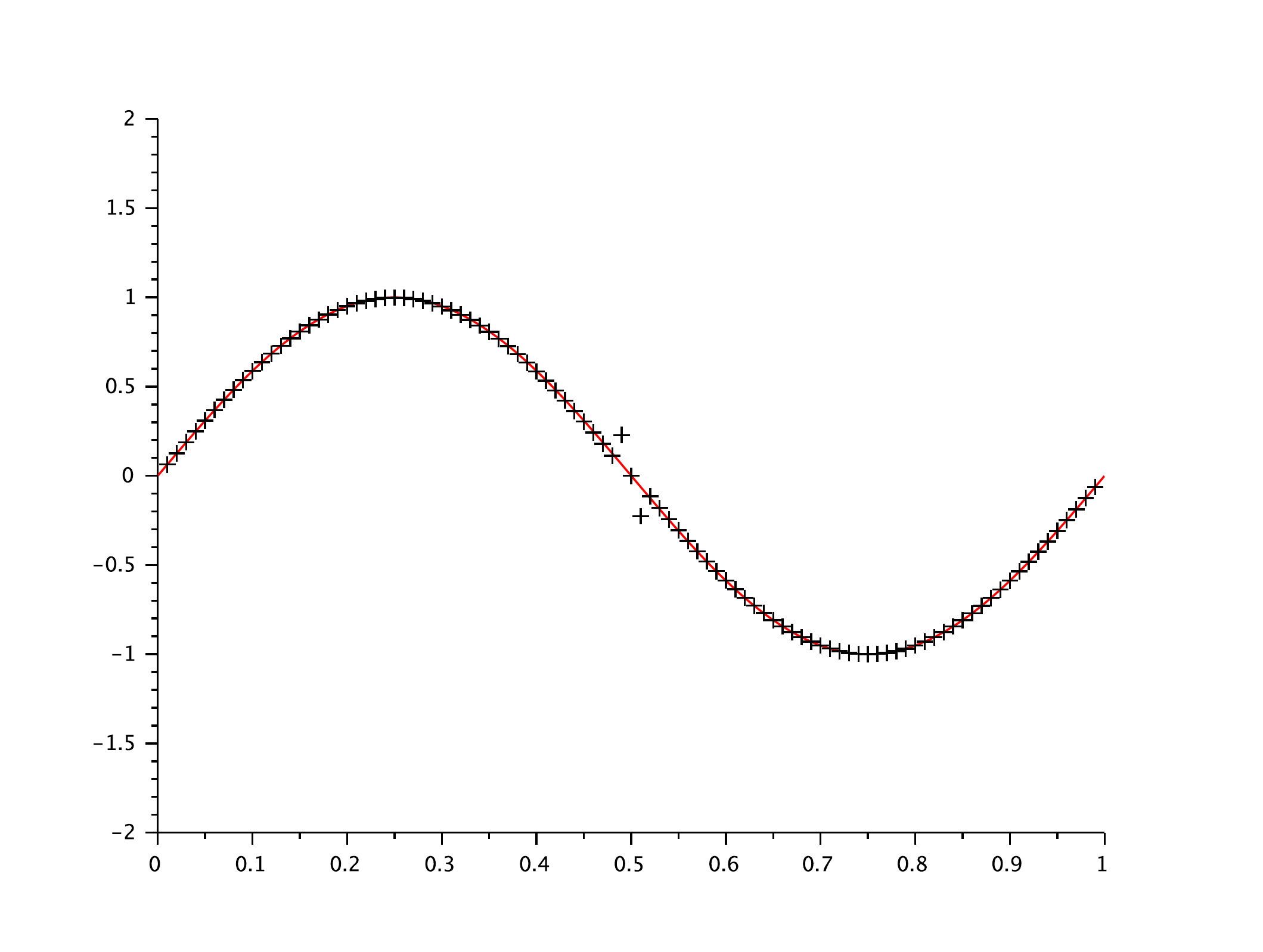}}\quad
	\subfloat[$Q=\sin(2 \pi x)$ and $a=0.2$.]
		{\includegraphics[width=0.3\textwidth]{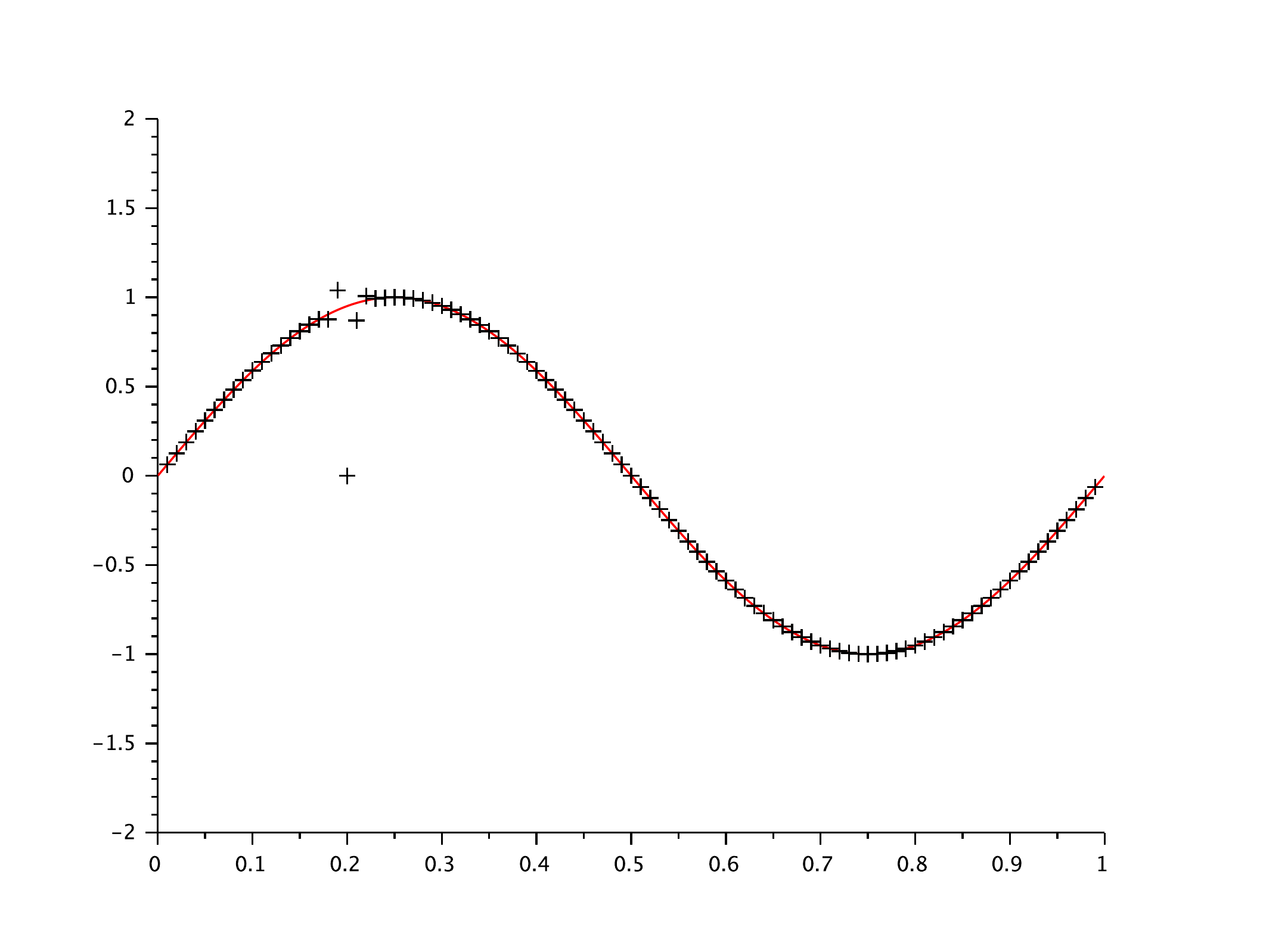}}\quad
	\subfloat[$Q$ heaviside and $a=0.5$.]	
		{\includegraphics[width=0.3\textwidth]{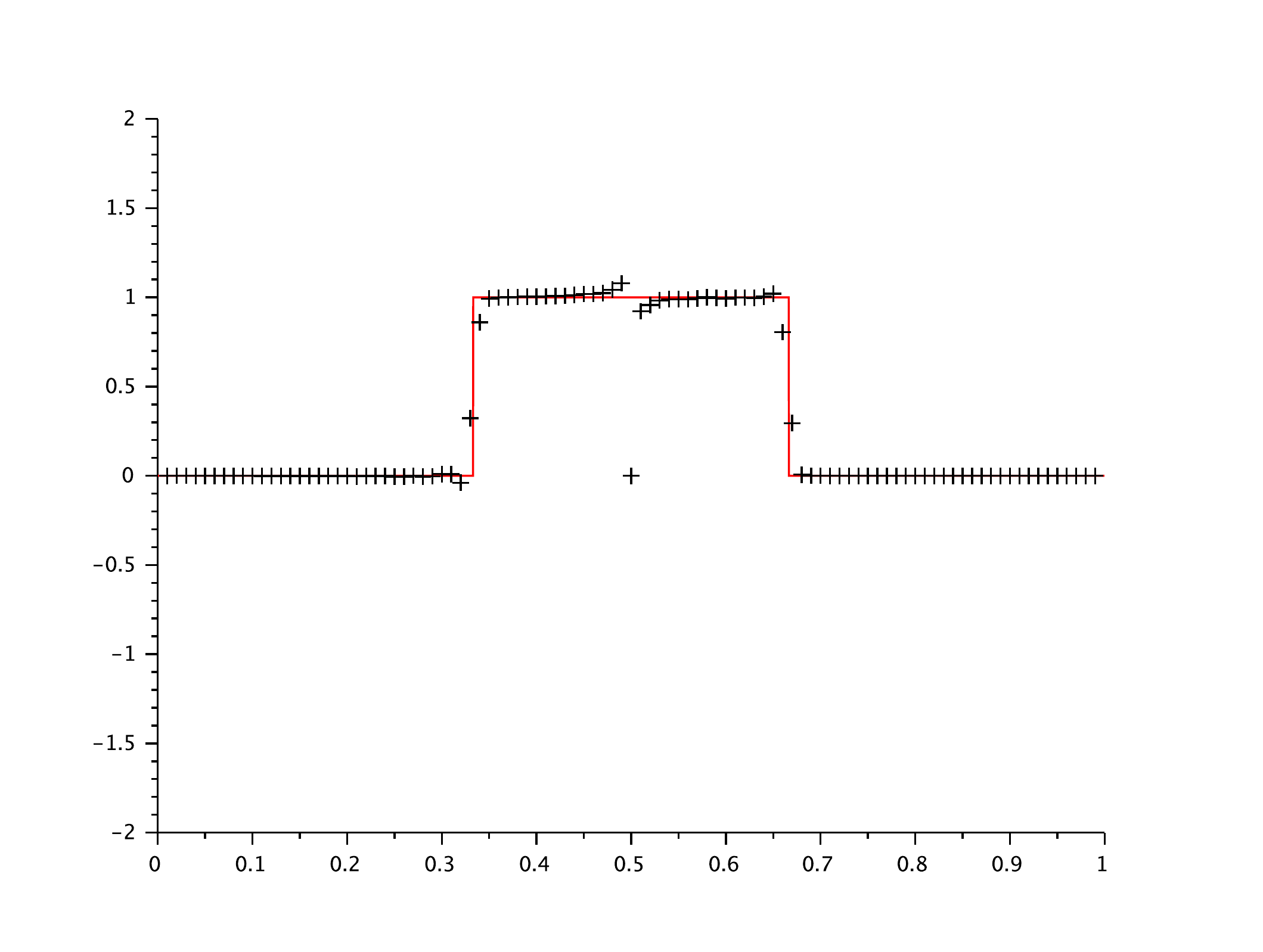}}
	\caption{Reconstructions for $w_0(x) = -a + x$ not satisfying (\ref{Positivity}), $CFL=1$ and $s=100$. \label{fig:wrongw0}}
\end{figure}

Note that here, we made the choice to set $0$ for the potential in the set $\{ x \in (0,L), \, |w_0(x)| \leq \alpha\}$. Of course, other choices are possible. Among them, one could for instance simply do a linear interpolation between the values at the boundary of the set $\{x \in (0,L), \, |w_0(x)| > \alpha\}$. Though, as illustrated in Figure \ref{fig:wrongw0}, it seems that Algorithm \ref{AlgoDisc-Bis} converges anyway in the set $\{x \in (0,L), \, |w_0(x)| > \alpha\}$. One can therefore perform any kind of interpolation process to complete the values of the potentials in the set $\{x \in (0,L), \, |w_0(x)| > \alpha\}$ after the convergence has been achieved.

\subsection{Simulations in two dimensions}

We also performed some reconstructions in two dimensions where $\Omega = [0,1]^2$, $x_0 = (-0.3,-0.3)$, $\Gamma_0=\{x=1\}\cup \{y=1\}$, $w_0(x_1,x_2)=2+\sin(\pi x_1)\sin(\pi x_2)$, $w_1 = 0$, $f = 0$, $f_\partial = 2$, $\beta=0.99$, $m=2$ and $CFL = 0.5 \leq \frac{\sqrt{2}}{2}$. Figure~\ref{fig:2D} presents the results obtained for three different potentials. We took $s=3$ and could not take it larger. Indeed, decreasing the space step $h$ to ensure that $sh$ remains small (condition \eqref{Why-sh-small}) leads to large systems (\ref{VarForm}) that exhaust the computational memory of \textsc{Scilab} pretty fast. The preliminary results of Figure~\ref{fig:2D} are obtained in an ideal framework where both direct and inverse problems are solved with the same numerical scheme on the same mesh and there is no noise. All theses simplifications will be removed in a forthcoming work where we wish to develop a convergent algorithm to reconstruct a non homogeneous wave speed from the information given by the flux $\mathcal{M}$.

\begin{figure}[!h]
	\centering
		\includegraphics[width=0.2\textwidth]{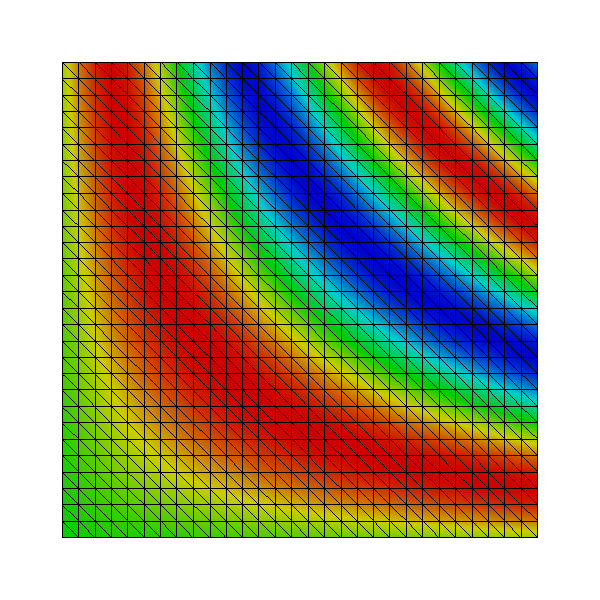}\quad
\includegraphics[width=0.2\textwidth]{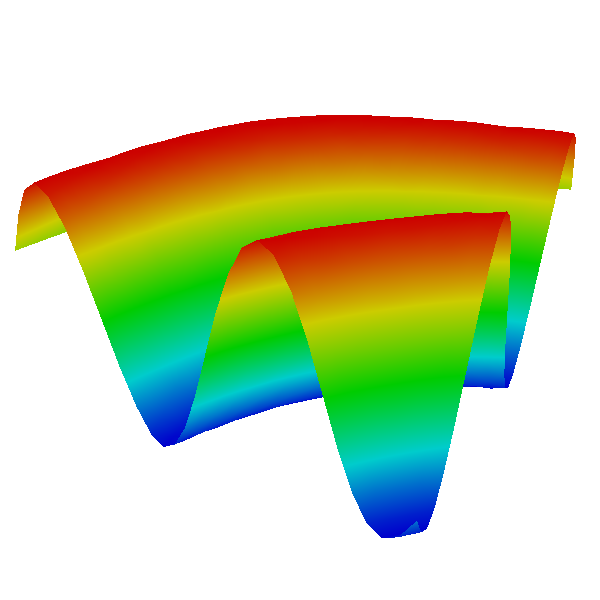}\qquad\qquad
		\includegraphics[width=0.2\textwidth]{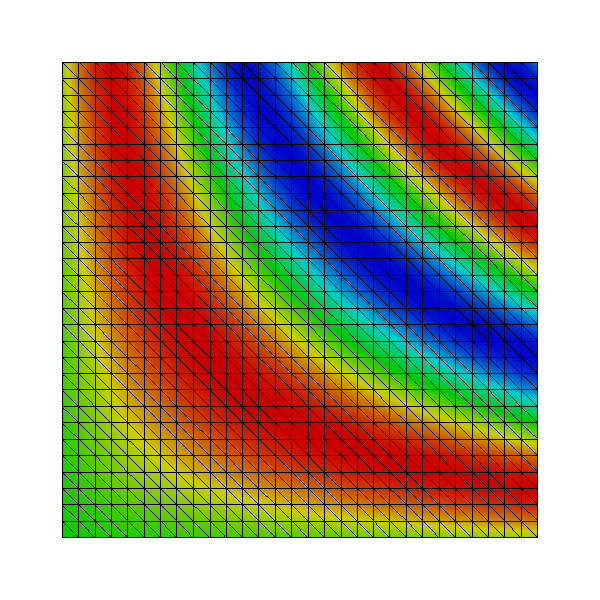}\quad
		\includegraphics[width=0.2\textwidth]{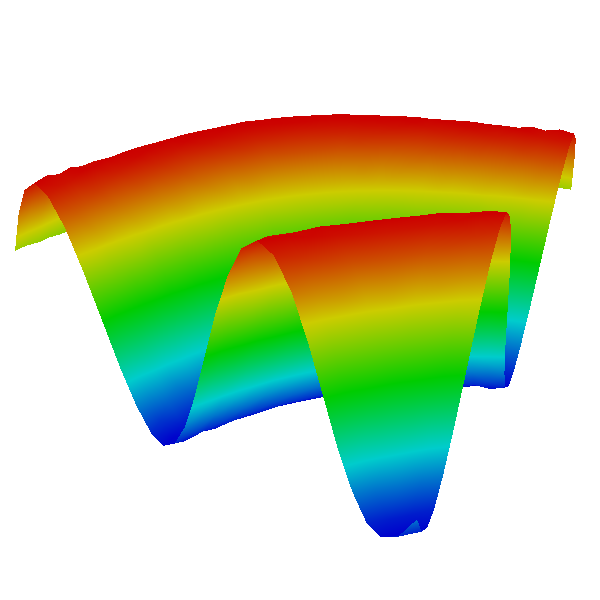}	\\
		\includegraphics[width=0.2\textwidth]{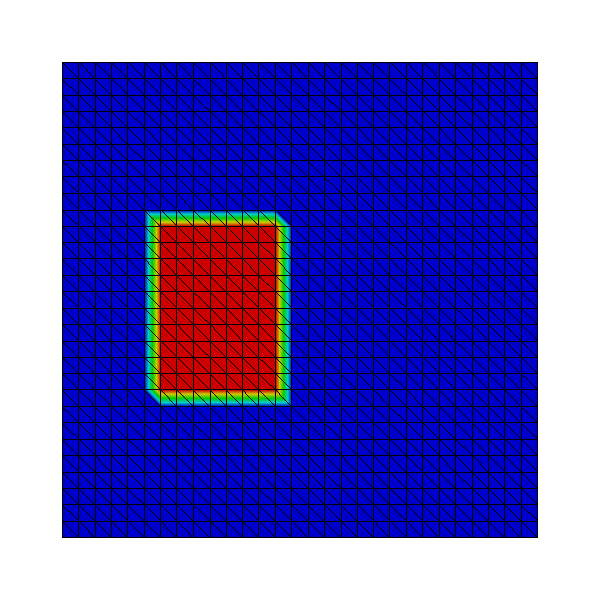}\quad
\includegraphics[width=0.2\textwidth]{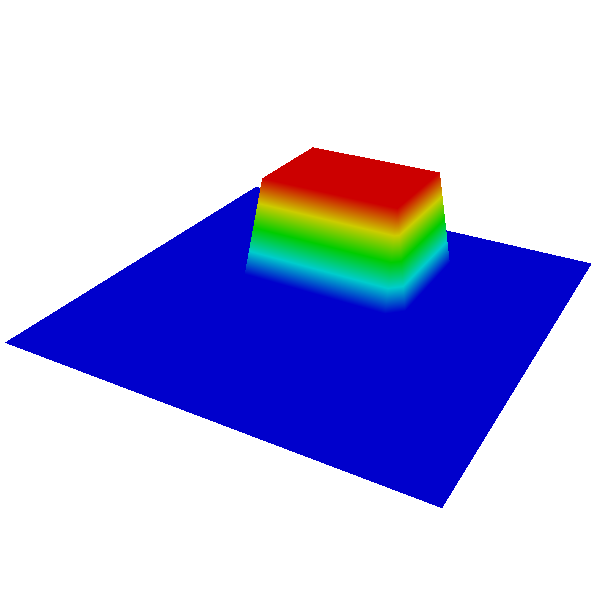}\qquad\qquad
		\includegraphics[width=0.2\textwidth]{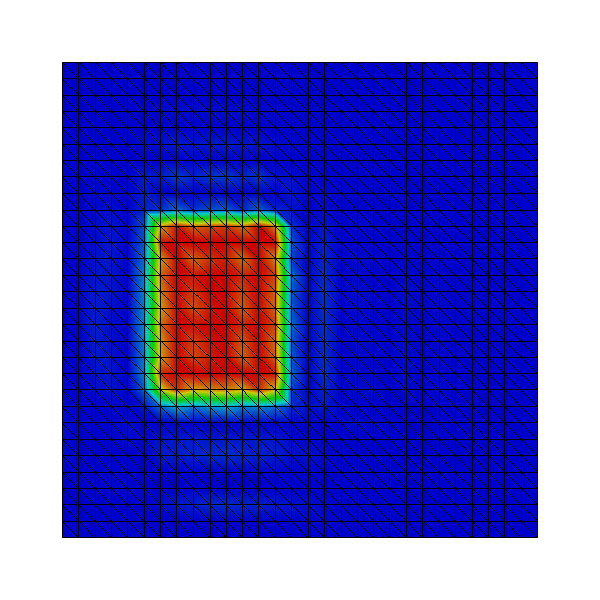}\quad
\includegraphics[width=0.2\textwidth]{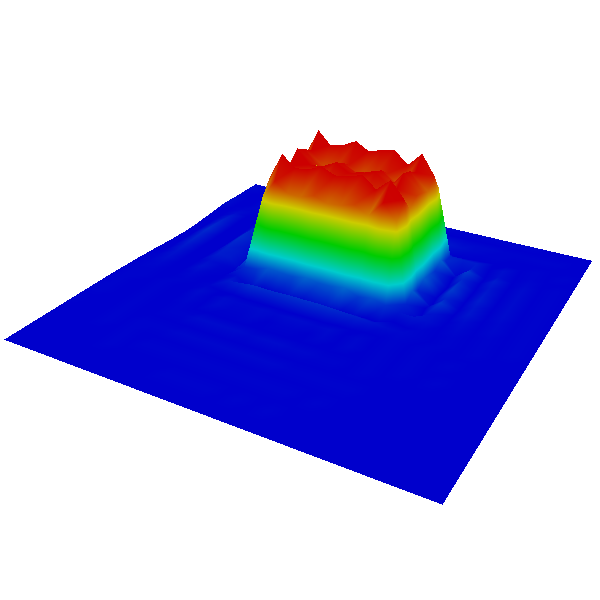}\\	
	\subfloat[Exact potentials.]
		{\includegraphics[width=0.2\textwidth]{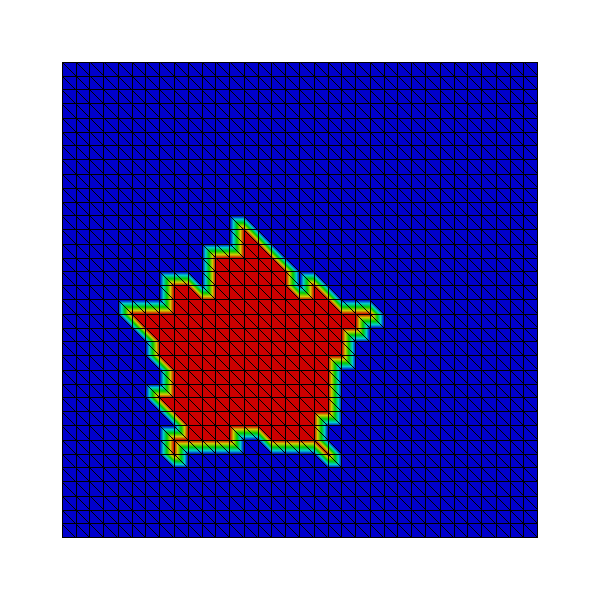}\quad
		\includegraphics[width=0.2\textwidth]{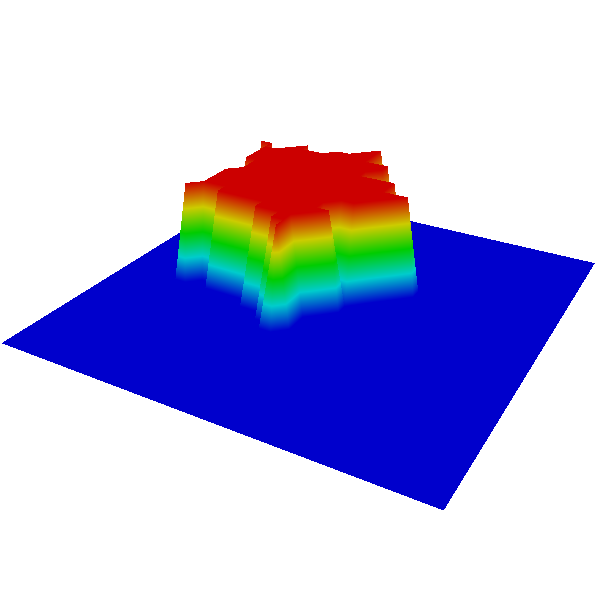}}\qquad\qquad
	\subfloat[Potentials recovered numerically.]
		{\includegraphics[width=0.2\textwidth]{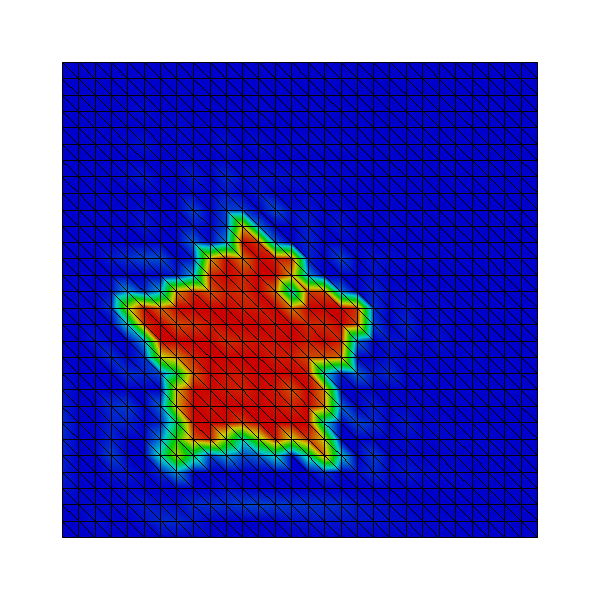}\quad
		\includegraphics[width=0.2\textwidth]{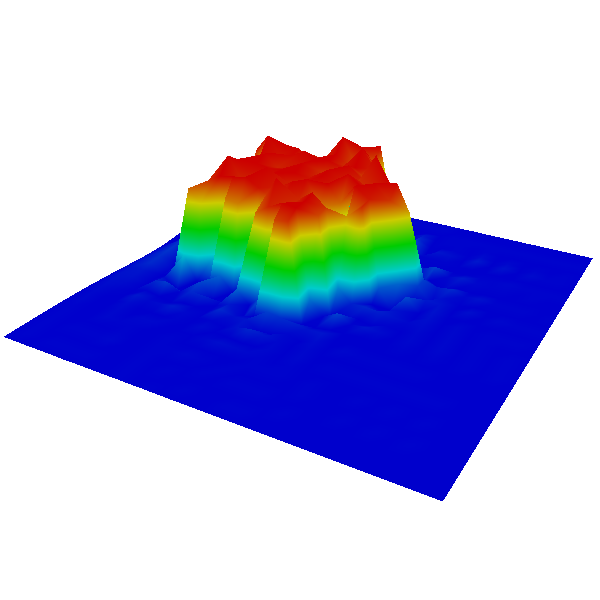}}	
	\caption{Different examples of reconstruction in the 2d case. \label{fig:2D}}
\end{figure}

\bibliographystyle{siamplain}

\providecommand{\MR}[1]{}

\end{document}